\documentclass[11pt]{article}
\usepackage{mathrsfs}
\usepackage{amsfonts}
\usepackage[titletoc]{appendix}
\usepackage[leqno]{amsmath}
\usepackage{graphicx}
\usepackage{latexsym}
\usepackage{amsmath, amsfonts, amssymb, amsthm, mathrsfs, euscript, makeidx, color}
\usepackage{enumerate}
\usepackage{indentfirst}
\usepackage[numbers,sort&compress]{natbib}

\oddsidemargin = 0pt \evensidemargin = 0pt \marginparwidth = 1in
\marginparsep = 0pt \leftmargin  = 1.25in \topmargin =0pt
\headheight  = 0pt \headsep  = 0pt \topskip =0pt
\footskip  =0.25in \textheight  = 9in \textwidth  =
6.5in

\def\sqr#1#2{{\vcenter{\vbox{\hrule height.#2pt
    \hbox{\vrule width.#2pt height#1pt \kern#1pt \vrule width.#2pt}
    \hrule height.#2pt}}}}

\def\3n{\negthinspace \negthinspace \negthinspace }
\def\2n{\negthinspace \negthinspace }
\def\1n{\negthinspace }
\def\bel{\begin{equation}\label}
\def\eel{\end{equation}}

\def\dbF{\mathbb{F}}

\def\dbR{\mathbb{R}}
\def\dbS{\mathbb{S}}


\def\={\buildrel \triangle \over =}

\def\ds{\displaystyle}

\def\ns{\noalign{\ss}}
%
%
\def\a{\alpha}

\def\d{\delta}
\def\e{\varepsilon}

\def\k{\kappa}

\def\f{\varphi}
\def\th{\theta}

\def\i{\infty}
%
%

\def\Th{\Theta}


%

%

\def\cE{{\cal E}}
\def\cF{{\cal F}}

\def\cK{{\cal K}}
\def\cL{{\cal L}}

%
%

%

%

%
\def\no{\noindent}

\def\ss{\smallskip}
\def\ms{\medskip}

\def\q{\quad}
\def\qq{\qquad}

%
%

\def\ti{\tilde}
\def\cd{\cdot}

\def\diag{\hbox{\rm $\, $diag$\, $}}

\def\tr{\hbox{\rm tr$\, $}}

\def\({\Big (}
\def\){\Big )}
\def\[{\Big[}
\def\]{\Big]}
\def\bde{\begin{definition}\label}
\def\ede{\end{definition}}
\def\be{\begin{equation}}
\def\bel{\begin{equation}\label}
\def\ee{\end{equation}}
\def\bt{\begin{theorem}\label}
\def\et{\end{theorem}}
\def\bc{\begin{corollary}\label}
\def\ec{\end{corollary}}
\def\bl{\begin{lemma}\label}
\def\el{\end{lemma}}
\def\bp{\begin{proposition}\label}
\def\ep{\end{proposition}}
\def\bas{\begin{assumption}}
\def\eas{\end{assumption}}
\def\br{\begin{remark}\label}
\def\er{\end{remark}}
\def\ba{\begin{array}}
\def\ea{\end{array}}
\def\ed{\end{document}}

\def\square#1{\vbox{\hrule\hbox{\vrule height#1%
  \kern#1\vrule}\hrule}}
\def\rectangle#1#2{\vbox{\hrule\hbox{\vrule height#1%
  \kern#2\vrule}\hrule}}


\font\tenbb=msbm10 \font\sevenbb=msbm7 \font\fivebb=msbm5

\newfam\bbfam
\scriptscriptfont\bbfam=\fivebb \textfont\bbfam=\tenbb
\scriptfont\bbfam=\sevenbb




\newtheorem{theorem}{Theorem}[section]
\newtheorem{corollary}[theorem]{Corollary}

\newtheorem{lemma}[theorem]{Lemma}
\newtheorem{proposition}[theorem]{Proposition}

\theoremstyle{definition}
\newtheorem{definition}[theorem]{Definition}
\newtheorem{remark}[theorem]{Remark}

 \newtheorem{example}{Example}[section]

\makeatletter
 
 \@addtoreset{equation}{section}
\makeatother


\begin{document}

\title{Infinite Horizon Mean-Field  Linear Quadratic  Optimal Control Problems  with Jumps  and the related Hamiltonian Systems\footnote{This work is supported by the Natural Science Foundation of Jilin Province for Outstanding Young Talents
(No. 20230101365JC)  and National Natural Science Foundation of China (12271304, 11971099, 12371443).}}

\author{Qingmeng Wei \footnote{School of Mathematics and Statistics, Northeast Normal University, Changchun 130024, P. R. China; {E-mail: weiqm100@nenu.edu.cn}}\qq
Yaqi Xu \footnote{School of Mathematics and Statistics, Northeast Normal University, Changchun 130024, P. R. China; {E-mail: xuyq222@nenu.edu.cn}}\qq
Zhiyong Yu\footnote{School of Mathematics, Shandong University, Jinan 250100, P. R. China; {E-mail: yuzhiyong@sdu.edu.cn}}}

\maketitle

\begin{abstract} In this work, we focus on an  infinite horizon mean-field linear-quadratic stochastic control problem with jumps. Firstly, the infinite horizon linear mean-field stochastic differential equations and backward stochastic differential equations
with jumps are studied  to support the research of the control problem. The global integrability properties of their solution processes are studied by introducing a kind of so-called dissipation conditions suitable  for  the systems involving the mean-field terms and jumps.
 For the control problem, we conclude  a sufficient and necessary condition   of   open-loop optimal control    by the variational approach.
 Besides, a kind of infinite horizon fully coupled linear mean-field forward-backward stochastic differential equations with jumps is   studied by using the method of continuation. Such a research makes the characterization of the open-loop optimal controls   more straightforward and complete.

 \end{abstract}

\ms

\no\bf Keywords: \rm infinite horizon, mean-field FBSDE with jumps,   linear-quadratic, open-loop control


\no\bf AMS Mathematics Subject Classification. \rm
93E20, 60H10, 49N10

\section{Introduction}\label{SEC_Int}

This paper is mainly about a kind of   infinite horizon  linear quadratic  (LQ, for short) stochastic optimal control problems.
We firstly introduce   the framework which   will be worked on. Let $(\Omega, \mathcal{F}, \mathbb{F}, \mathbb{P})$ be a complete filtered probability space, where the filtration $\mathbb{F}=\left\{\mathcal{F}_{t} ; 0 \leq t<\infty\right\}$ is generated by the following mutually independent processes and augmented by all $\mathbb{P}$-null sets,

(i) a $d$-dimensional standard Brownian motion $W(\cdot)=(W_1(\cdot),\dots,W_d(\cdot))^\top$;

(ii) a Poisson random measure $\mu=(\mu_1,\dots,\mu_l)^\top$ defined on $\mathbb{R}^{+} \times E$ with $\mu_i,$ $i=1,\cdots l$ being independent and   the corresponding compensator $\hat{\mu}(\mathrm dt, \mathrm de)=(\hat{\mu}_1(\mathrm dt, \mathrm de),\dots,\hat{\mu}_l(\mathrm dt, \mathrm de))^\top=(\rho_1(\mathrm de)\mathrm dt ,\dots,  \rho_l(\mathrm de)\mathrm dt )^\top$,
where $E=\mathbb{R}^{ l}\backslash\{0\}$ is equipped with its  Borel $\sigma$-field $\mathcal E$, and $\rho_i$ is the given $\sigma$-finite measure on the measurable space $(E, \mathcal{E} )$ and satisfies $\displaystyle\int_{E}\left(1 \wedge|e|^{2}\right) \rho_i(\mathrm de)<\infty$, $i=1,\cdots l$.

  Consider an  infinite horizon mean-field stochastic differential equation (MF-SDE, for short) with jumps as follows,
\begin{equation}\label{state}
\left\{
\begin{aligned}
&\!\!\mathrm dx(s)=\big(A(s) x(s) \!+ \!\bar{A}(s)\mathbb{E}{[x(s)]} \!+\! B(s) u(s)\!+\!\bar{B}(s)\mathbb{E}{[u(s)]}\big) \, \mathrm ds\\
& \!\! \!  +\sum\limits_{i=1}^d\big(C_i(s) x(s)\!+\!\bar{C}_i(s)\mathbb{E}{[x(s)]}\!+\!D_i(s) u(s)\!+\!\bar{D}_i(s)\mathbb{E}{[u(s)]}\big) \, \mathrm dW_i(s) \\
&\!\! \!  +\! \ds\sum\limits_{j=1}^l\! \int_E\!\! \big(M_j(s,e) x(s-)\!+\!\bar{M}_j(s, e)\mathbb{E}{[x(s-)]}\!+\!N_j(s, e) u(s)
\!+\!\bar{N}_j(s, e)\mathbb{E}{[u(s)]}\big)\,  \widetilde{\mu}_j\left(\mathrm ds, \mathrm de\right),\ \! s\geq t, \!\!\!\!\!  \\
&\!\!x(t)=x_{t},
\end{aligned}\right.
\end{equation}
where   $t\in [0,\infty)$ is the initial time,  $x_t\in L_{\mathcal {F}_t}^2(\Omega;\mathbb{R}^n)$ is the initial state  and    $u(\cdot) $ is the control process.
The optimization  object  is introduced as follows,
\begin{equation}\label{cost}
\begin{array}{lll}
&\!\!\!J^{K}\left(t, x_{t} ; u(\cdot)\right) =\dfrac{1}{2} \mathbb{E} \ds\int_{t}^{\infty} e^{2 K s}g\big(s,x(s),\mathbb{E}[ x(s)],u(s),\mathbb{E}[ u(s)]\big)\,
  \mathrm ds,
\end{array}
\end{equation}
where $K\in\mathbb {R}$, and for $(s,x, x',u,u')\in [0,\infty)\times \mathbb{R}^n \times \mathbb{R}^n\times \mathbb{R}^m\times \mathbb{R}^m$,
$$
\begin{array}{lll}
&\!\!\!g(s,x,x',u,u') := \Bigg\langle\!\!\! \left(
                \begin{array}{cc}
               \!\!\! Q(s) & S(s)^\top\!\!\! \\
               \!\!\! S(s) & R(s) \!\!\!\\
                \end{array}
              \right)\!\!\!\left(
                       \begin{array}{c}
                        \!\!\! x\!\!\!\\
                        \!\!\! u\!\!\! \\
                       \end{array}
                     \right),
                     \left(
                       \begin{array}{c}
                      \!\!\!   x\!\!\! \\
                      \!\!\!  u \!\!\!\\
                       \end{array}
                     \right)\!\!\!\Bigg\rangle +\Bigg\langle \!\!\!\left(
                             \begin{array}{cc}
                              \!\!\! \bar{Q}(s) & \bar{S}(s)^\top \!\!\!\\
                              \!\!\! \bar{S}(s) & \bar{R}(s)\!\!\! \\
                             \end{array}
                           \right)
                           \!\!\! \left(
                       \begin{array}{c}
                      \!\!\!  x'\!\!\! \\
                       \!\!\! u'\!\!\! \\
                       \end{array}
                    \right),
                     \left(
                       \begin{array}{c}
                       \!\!\! x' \!\!\!\\
                      \!\!\!  u' \!\!\!\\
                       \end{array}
                     \right)\!\!\!\Bigg\rangle.
\end{array}
$$
 The  details for the coefficients of \eqref{state} and \eqref{cost} are postponed and will be specified  in Section \ref{sec:LQ}.  Now we    formulate the control problem   informally as follows.

 \textbf{Problem (MF-LQ)}
For any initial pair $\left(t, x_{t}\right) \in[0, \infty) \times$ $L_{\mathcal{F}_{t}}^{2}\left(\Omega ; \mathbb{R}^{n}\right)$, find an admissible control $u^{*}(\cdot) \in L^{2,K}_\mathbb{F}(t, \infty;\mathbb{R}^m)$ such that
\begin{equation}\label{equ-MFLQ-5}
J^{K}\left(t, x_{t} ; u^{*}(\cdot)\right)=\inf _{u(\cdot) \in L^{2,K}_\mathbb{F}(t, \infty;\mathbb{R}^m)} J^{K}\left(t, x_{t} ; u(\cdot)\right).
\end{equation}
The    admissible control $u^{*}(\cdot)$ satisfying \eqref{equ-MFLQ-5} is said to be  an  \emph{open-loop optimal  control}  of  Problem (MF-LQ) at   $\left(t, x_{t}\right)$.

The  objective of this paper is to  characterize  the open-loop optimal  controls.
The study  on  the aspect of closed-loop optimal controls  is not involved here,  which will be delayed to our future works.
Note that, infinite horizon, time-varying coefficients, mean-field terms,  Poisson jumps, these  four features   coexist in Problem (MF-LQ). There are indeed some relevant researches involving the above one or two features, referring to
 \cite{AO-2014, DM-2019, GT-2008, HOP-2013,   HLY-2015, LSY-2021, MWY-2021, OV-2017, PZ-2021,  RZ-2000, RM-2022,  Sun-Yong-2018, Tian-Yu-2020, Wei-Yu-2021, Yu-2017}, etc.
 However, these works can not be generalized directly to solve  Problem (MF-LQ). The study of Problem (MF-LQ) turns out to be rather
tricky, subtle and technical, far from the
combination of the four features.
Now, let us display the main contributions of this paper and  make the elaborate explanation.

The first and the most important is  to  formulate Problem (MF-LQ) well.
Here,  we do not   follow the  notion of  stabilizability used in \cite{HLY-2015, Sun-Yong-2018, LSY-2021} for the infinite horizon systems, which is  not clear enough  for  the models with   time-varying coefficients.
 We   put a lot of effort into  studying the global integrability of the solutions to the stochastic linear dynamical systems  involving time-varying coefficients and mean-field terms, just liking \eqref{state}.
 Such a study is also  carried out for  infinite horizon mean-field backward stochastic differential equation (MF-BSDE, for short) with jumps, which will behavior as the adjoint equation of  Problem (MF-LQ).
We introduce a kind of  conditions,   which is  equivalently to the classical  dissipation conditions in dynamical systems, to a certain extent.
 This implies that    the  proposed conditions   are relative classical and acceptable, although   stronger than the stabilizability requirements in \cite{HLY-2015, Sun-Yong-2018, LSY-2021}.
Under  the proposed conditions, we get the needed global wellposedness of MF-SDEs and MF-BSDEs by the subtle derivations and  using the separation technique introduced by Yong \cite{Yong-2013} for mean-field linear models.

 After the well formulation of Problem (MF-LQ),  we  proceed to characterize its open-loop optimal controls.
The variational technique in control theory  is applied here to get the equivalence   between the open-loop optimal control and an optimality condition  (also called a stationary condition).
 During the process,
 the positive definiteness condition  (referring to Condition (PD) in Section 3) on the coefficient matrices of the  cost functional \eqref{cost} is assumed.
Moreover, the positive definiteness condition and the optimality condition together   transform the optimal state and  adjoint equations
 into  the  Hamiltonian system \eqref{Hamil}, which is in fact an infinite horizon fully coupled mean-field forward-backward stochastic differential equations (MF-FBSDEs, for short) with jumps.
Then,  the characterization of  the open-loop optimal controls boils down to the existence and uniqueness of the solutions to such   equations.

To make the study more completely, we  also pay   attention   to establishing the wellposedness of   infinite horizon fully coupled MF-FBSDEs with jumps.  Talking about this,  the similar research  in finite horizon  case is made in \cite{WYY-2019}, where the model    can cover most of finite horizon mean-field LQ  control problems.
In addition,   one of the main approaches of studying fully coupled FBSDEs is
 the method of continuation, which is applicable  no matter   in finite or infinite horizon, such as \cite{HP-1995, Peng-Wu-1999, Peng-Shi, Wei-Yu-2021}, etc.
Therefore, here we try to apply this approach in infinite horizon fully coupled MF-FBSDEs with jumps.
The method itself is not difficult. However,  the construction of the parameterized equations  and the associated a priori estimate need the deeply thought in our framework.  We carry out  the delicate  analysis to conclude  the wellposedness of the  Hamiltonian system \eqref{Hamil} with $S(\cdot)$ and $\bar S(\cdot)$ all being the zero matrices.
For the study of the other circumstance of $S(\cdot)$ and $\bar S(\cdot)$, we do not conduct the  method of continuation, but resort to a kind of linear transformation.  According to the distinguishing study, we find that  $S(\cdot)$ and $\bar S(\cdot)$ being zero or not will aspect the existing space of the optimal controls, which will not happen in finite horizon.

  Besides the above mentioned contributions, another special  feature   is the appearance of constant $K$, which  is in fact inspired by our  previous work \cite{Wei-Yu-2021}. On one hand, the study here  with $K\in\mathbb{R}$ can be regarded as a generalization of   \cite{Wei-Yu-2021} to the problems concerning mean-field and jumps. Meanwhile,
  it can cover a lot of the existing models by setting  $K=0.$
 On the other hand, the introduction of $K\in\mathbb{R}$  is  friendly with various models, regardless of  the degree of dissipation conditions. We can always identify the  parameter $K$  according to the intrinsic properties of the coefficient matrices, and formulate the control problems very well.
As indicated in  \cite{Wei-Yu-2021}, when the eigenvalues of $A(\cdot )$ are negative enough, the state
process $x(\cdot )$ itself will admit an exponential decay, so that the parameter $K$ may take
a positive real number.
By the way, except \cite{Yu-2017}, there is rarely infinite horizon  LQ control  problem involving the Poisson random measures, which owns the practical economic background.
Here we incorporate  the control problem with the multi-dimensional Poisson random measures, which is quite comprehensive.

The rest of this paper is structured as follows: Section 2 presents the necessary notations and explores the wellpposedness of the infinite horizon MF-SDEs and MF-BSDEs with jumps.  Section 3 is about the infinite horizon mean-field    LQ problem with jumps.
 By systematically establishing the wellposedness of infinite horizon fully coupled MF-FBSDEs with jumps, we give a more straightforward and complete characterization of the open-loop optimal controls.
An example is presented to illustrate the different behaviors when  the cross terms $S(\cdot)$, $\bar S(\cdot)$ are zero or not.

\section{Preliminaries}

We shall use $\langle\cdot, \cdot\rangle$ and $|\cdot|$ to represent the Euclidean inner product and the Euclidean norm, respectively.  For the Euclidean space of matrices, we also use the following  operator norm,
$$ \left\Vert \mathbf{\Lambda} \right\Vert:=\sup _{0 \neq x \in \mathbb{R}^{n}} \frac{|\mathbf{\Lambda} x|}{|x|} \quad \text { for any } \mathbf{\Lambda} \in \mathbb{R}^{m \times n}.
 $$

 In the context, $\mathbb{S}^n$ is the set of $n \times n$ real symmetric matrices.
When a matrix $\mathbf{\Lambda} \in \dbS^{n}$ is positive semidefinite (resp., positive definite, negative semidefinite, negative definite), we denote $\mathbf{\Lambda} \geq 0$ (resp., $>0,$ $\leq 0,$ $<0$).
Moreover, for a matrix-valued function $\mathbf{\Lambda}(\cdot):[0, \infty) \rightarrow \dbS^{n}$, if $\mathbf{\Lambda}(s) \geq 0$ (resp., $ >0, $ $\leq 0, $ $<0)$ for almost all $s \in[0, \infty) $, then we denote $\mathbf{\Lambda}(\cdot) \geq 0$ (resp., $>0, $ $\leq 0$, $<0)$.
Furthermore, if there exists a constant $\d>0$ such that $\mathbf{\Lambda}(\cdot)-\d I \geq 0$ (resp., $\mathbf{\Lambda}(\cdot)+\d I \leq 0$),
we denote $\mathbf{\Lambda}(\cdot) \gg 0$ (resp., $\mathbf{\Lambda}(\cdot) \ll 0$).\par

 Now we introduce  some spaces of random variables or stochastic processes as follows. For any Euclidean space $\mathbb{H}$, constant $K \in \mathbb{R}$ and $0\leq t\leq s<\infty$,
\begin{itemize}
 \item $\!\!L_{\mathcal{F}_{s}}^{2}\left(\Omega ; \mathbb{H}\right)$ is the set of all the  $\mathcal{F}_{s}$-measurable  $\xi: \Omega\to\mathbb{H}$  satisfying $ \mathbb{E}|\xi|^{2}<\infty$;


%
 \item $\!\!L^{\infty}\left(t, \infty ; \mathbb{H}\right)$ is the set of the Lebesgue measurable $\varphi: [t, \infty)\to\mathbb{H} $ satisfying  $\sup\limits_{s \in[t, \infty)}|\varphi(s)|<\infty $;

 \item $\!\!\displaystyle
 L_{\mathbb{F}}^{2, K}\left(t, \infty; \mathbb{H}\right) $ is the set of $\mathbb{F}$-progressively measurable $\varphi: \Omega\times[t, \infty)\to
 \mathbb{H} $ such that
 $$\mathbb{E} \int_{t}^{\infty}\left|\varphi(s) \mathrm e^{K s}\right|^{2} \,\mathrm ds <\infty ;$$

 \item $\!\!\ds L^{2}_\rho (E; \mathbb {R}^{n\times l})$ is the set of all the $\cE$-measurable $\psi=(\psi_1,\dots,\psi_l): E\to\mathbb {R}^{n\times l}  $ such that
     $$ \|\psi(\cd)\|_\rho:=\Big(\int_E   \tr[\psi(e)\varrho(\mathrm de) \psi(e)^\top]\Big)^\frac12 = \(\sum_{j=1}^l \int_E \left|\psi_j(e)\right|^{2}\, \rho_j(\mathrm de)\)^\frac12<\infty,$$
where $\varrho(\cdot):=\diag(\rho_1(\cdot),\dots,\rho_l(\cdot))$.
For any $\psi(\cdot),$    $\bar\psi(\cdot)\in L^{2}_\rho (E; \mathbb {R}^{n\times l})$, the associated  inner product is introduced  as,
 $$\displaystyle\langle \psi,\bar \psi\rangle_\rho:=  \int_E \tr[\psi(e)\varrho(\mathrm de)\bar\psi(e)^\top] =\sum_{j=1}^l \int_E \bar\psi_j(e)^\top  \psi_j(e)    \, \rho_j(\mathrm de);$$

 \item $\!\! \displaystyle \cK_{\mathbb{F}}^{2, K}(t, \infty ; \mathbb {R}^{n\times l}) $ is the set of all the $\mathcal{P}_t \otimes \cE $-measurable $k=(k_1,\dots,k_l): \Omega\times [t, \infty)\times E\to\mathbb {R}^{n\times l}$ such that
 $$
  \mathbb{E} \int_{t}^{\infty} \|k(s, \cdot)\mathrm e^{K s}\|_\rho ^{2} \,\mathrm ds <\i,$$
 where $\mathcal{P}_t$ is the $\sigma$-algebra generated by the $\dbF$-progressively measurable processes on $ \Omega\times [t, \infty)$.

\end{itemize}
%
%
%
%
For any $K_{1}<K_{2}$, it is obvious that $L_{\mathbb{F}}^{2, K_2}\left(t, \infty ; \mathbb{H} \right) \subset L_{\mathbb{F}}^{2, K_{1}}\left(t, \infty ; \mathbb{H} \right)$ and $\cK_{\mathbb{F}}^{2, K_{2}}\left(t, \infty ; \mathbb{H}\right) \subset \cK_{\mathbb{F}}^{2, K_{1}}\left(t, \infty ; \mathbb{H}\right)$. For  convenience of the later study, we use the following abbreviated notations,
$$\ba{ll}
\ns\ds \mathscr{R}:=  \mathbb{R}^{n} \times \mathbb{R}^{n} \times \mathbb{R}^{n  d} \times L_\rho^{2} (E; \mathbb{R}^{n l} ), \\
\ns\ds \mathcal{L}_{\mathbb{F}}^{2, K}(t, \infty):=
L_{\mathbb{F}}^{2, K}(t, \infty ; \mathbb{R}^{n})
\times L_{\mathbb{F}}^{2, K} (t, \infty ; \mathbb{R}^{n})
\times L_{\mathbb{F}}^{2, K}(t, \infty ; \mathbb{R}^{n  d})
\times \cK_{\mathbb{F}}^{2, K}(t, \infty ; \mathbb{R}^{n l  }).
\ea$$
And the inner product $\langle \cdot,\cdot\rangle$ and the norm $|\cdot|$ of $\mathscr{R}$ are introduced as follows,
for any $\th =( x, y, z, k(\cdot)), $ $\th' =( x', y', z', k'(\cdot))\in\mathscr{R}$,
$$\ba{ll}
\ns\ds \langle \theta, \th'\rangle := \langle x, x'\rangle+\langle y, y'\rangle+\langle z, z'\rangle+\langle k(\cdot), k'(\cdot)\rangle _\rho, \q |\th|:=\sqrt{\langle \theta, \theta\rangle} .
\ea $$
Naturally, for any $\th(\cdot):=( x(\cdot), y(\cdot), z(\cdot), k(\cdot,\cdot))\in\cL_{\mathbb{F}}^{2, K}(t, \infty)$, its norm is introduced as
$$\ba{ll}
\displaystyle \|\theta(\cdot)\|_{\cL_{\mathbb{F}}^{2, K}} :=\left\{\mathbb{E}     \int_{t}^{\infty}\Big(|x(s)\mathrm e^{K s}|^{2}
+ |y(s)\mathrm e^{K s}|^{2}
+ |z(s)\mathrm e^{K s}|^{2}
+\|k(s, \cd)\mathrm e^{K s} \|_\rho^{2}\Big)\mathrm ds   \right\}^{\frac{1}{2}}.
\ea $$
%
%

 \smallskip

In the context, for any   involved   random variables $\zeta$, we shall always use $$\zeta^{(1)}:=\zeta-\mathbb{E}[\zeta],\quad \zeta^{(2)}:=\mathbb{E}[\zeta].$$
 Obviously,
$
\mathbb{E}|\zeta|^{2}= \mathbb{E} |\zeta^{(1)}|^{2} +|\zeta^{(2)}|^{2}.
$
Moreover,  we  set
  $\Gamma^1(\cdot):=\Gamma(\cdot) $,   $\Gamma^2(\cdot):=\Gamma (\cdot)+\bar \Gamma(\cdot)$ for
all the matrix-valued functions  $\Gamma(\cdot)$, $\bar \Gamma(\cdot)$   appearing in the following sections.

\subsection{Infinite horizon   mean-field  SDEs with jumps}

For any initial pair $(t, x_t)\in [0, \infty)\times L_{\cF_t}^{2}(\Omega;\mathbb{R}^n)$,  we consider the following  infinite horizon  mean-field SDE  with jumps,
\begin{equation}\label{equ-SDE-1}
\left\{
\begin{aligned}
&\!\!  \mathrm dx(s)\! =\!\big(A(s) x(s) + \bar{A}(s)\mathbb{E}{[x(s)]} + b(s)\big)  \mathrm ds
+\sum\limits_{i=1}^d\big(C_i(s) x(s)+\bar{C}_i(s)\mathbb{E}{[x(s)]}+\sigma_i(s)\big)  \mathrm dW_i(s)\! \\
&\hskip1.2cm
+\ds\sum\limits_{j=1}^l\int_E \big(M_j(s, e) x(s-)+\bar{M}_j(s, e)\mathbb{E}{[x(s-)]}+\gamma_j(s, e)\big) \widetilde{\mu}_j(\mathrm ds, \mathrm de),\qquad  s\geq t,\\
&\!\!   x(t) \!= \!x_t,
\end{aligned}
\right.
\end{equation}
where, for $i=1,\cdots,d,$ $ j=1,\cdots, l$,  the coefficients $A(\cdot),$ $\bar{A}(\cdot),$ $C_i(\cdot),$ $\bar{C}_i(\cdot),$ $M_j(\cdot, \cdot),$ $\bar{M}_j(\cdot, \cdot)$  are the matrix-valued  deterministic  functions,   the nonhomogeneous terms   $b(\cdot), $ $\sigma_i(\cdot)$, $\gamma_j(\cdot, \cdot)$ are the vector-valued stochastic processes.
 To simplified the notations,    set
  $$\begin{aligned}
& \mathbf{C}(\cdot)=(C_1(\cdot)^\top,\dots,C_d(\cdot)^\top)^\top,\qquad  \bar {\mathbf{C}}(\cdot)=(\bar C_1(\cdot)^\top,\dots,\bar C_d(\cdot)^\top)^\top,\\
 &\mathbf{M}(\cdot,\cdot)=(M_1(\cdot,\cdot)^\top,\dots,M_l(\cdot,\cdot)^\top)^\top,\qquad \bar {\mathbf{M}}(\cdot,\cdot)=(\bar M_1(\cdot,\cdot)^\top,\dots,\bar M_l(\cdot,\cdot)^\top)^\top,\\
 &\sigma(\cdot)=(\sigma_1(\cdot)^\top,\dots,\sigma_d(\cdot)^\top)^\top,\qquad \gamma(\cdot,\cdot)=(\gamma_1(\cdot,\cdot)^\top,\dots,\gamma_l(\cdot,\cdot)^\top)^\top.
 \end{aligned}$$
%
%
Next, we assume   the following conditions on  the above functions.

\smallskip
\noindent\textbf{Assumption (H$_{1}$)}
$A(\cdot), \bar{A}(\cdot)\in L^{\infty}\left(0, \infty; \mathbb{R}^{n \times n}\right)$,
$\mathbf{C}(\cdot), \bar {\mathbf{C}}(\cdot)\in L^{\infty}\left(0, \infty ; \mathbb{R}^{(nd)   \times n}\right),$
$\mathbf{M}(\cdot, \cdot), \bar {\mathbf{M}}(\cdot, \cdot)\in  L^{\infty}\left(0, \infty ; L^{2}_\rho \left(E;  \mathbb{R}^{(nl) \times n}\right)\right)$.
Moreover,   $b(\cdot) \in L_{\mathbb{F}}^{2,  K_1}\left(0, \infty ; \mathbb{R}^{n}\right)$, $\sigma(\cdot) \in L_{\mathbb{F}}^{2, K_1}\left(0, \infty ; \mathbb{R}^{n d }\right)$, $\gamma(\cdot, \cdot) \in \cK_{\mathbb{F}}^{2,  K_1}\left(0, \infty ; \mathbb{R}^{nl}\right)$ for some constant  $ K_1\in\mathbb{R}$.
\ms
It is classical that,
under {\bf{(H$_{1}$)}}, for any $(t, x_t)\in [0, \infty)\times L_{\cF_t}^{2}(\Omega;\mathbb{R}^n)$, \eqref{equ-SDE-1} admits  the unique $\dbF$-adapted solution $x(\cd)$.
Moreover,   for any $T<\infty$, $
\mathbb{E}\big[\sup\limits_{s\in[t,T]}|x(s)|^2\big]<\infty.
$
Similarly to  the case without mean-field terms, we have the following result  for mean-field SDE
 \eqref{equ-SDE-1}. The details may  be referred  to \cite{Yu-2017}.

\begin{remark}\label{Re-SDE-1}\sl
Let {\bf{(H$_{1}$)}} hold. If the solution $x(\cdot)$ to MF-SDE with jumps \eqref{equ-SDE-1} belongs to $L_{\mathbb{F}}^{2, K}\left(t, \infty ; \mathbb{R}^{n}\right)$, then
$
\displaystyle\lim_{s \rightarrow \i} \mathbb{E}\big[ |x(s) \mathrm e^{ K s} |^{2}\big]=0.
$
\end{remark}
Let us make the following abbreviations,
\begin{equation}\label{kappa}\left\{
\begin{aligned}
&\!\!\kappa_1:=-\frac{1}{2}\mathop{\sup}\limits_{s \in[t, \infty)}\lambda_{\max }\( \!A^{2}(s) +
 A^{2}(s) ^{\top} \!\),\\
&\!\!\kappa_2:=-\frac{1}{2}\mathop{\sup}\limits_{s \in[t, \infty)}\lambda_{\max }\( \! A^{1}(s)+ A^{1}(s)^{\top}+ \mathbf{C}^{1}(s)^{\top}\mathbf{C}^{1}(s)
+\int_E \mathbf{M}^{1}(s,e)^\top\varrho(\mathrm de)\mathbf{M}^{1}(s,e)\!\),\\
&\!\! \kappa :=\min\{\kappa_1,\kappa_2 \},
\end{aligned}\right.
\end{equation}
where $ \lambda_{\max}(\cdot)$
represents the largest eigenvalue of the matrix.
And note that, for $i=1,2$, $\mathbf{M}^{i}(\cdot, \cdot)=(   {M}^{i}_1(\cdot, \cdot)^\top,\dots, {M}^{i}_l(\cdot, \cdot)^\top  )^\top$ with  $  {M}^{1}_j  (\cdot, \cdot)=M_j(\cdot, \cdot)$,   $   {M}^{2}_j  (\cdot, \cdot)=M_j(\cdot, \cdot)+\bar M_j(\cdot, \cdot)$, $j=1,\cdots,l$.

\smallskip

Now we present the global integrability  of the solution to mean-field SDE \eqref{equ-SDE-1}.
\begin{lemma}\label{Le-SDE-1}\sl
Assume that {\bf{(H$_{1}$)}} holds.
Then the solution $x(\cdot)$ to MF-SDE with jumps \eqref{equ-SDE-1} belongs to $L_{\mathbb{F}}^{2, K}\left(t, \i; \mathbb{R}^{n}\right)$ with
$ K \leq K_1$ and $K< \kappa$.
And for any $ \varepsilon \in(0, \frac{-2K+2\kappa }{3})$, we have
\begin{equation}\label{equ-SDE-3}
\begin{aligned}
&(- 2K+ 2\kappa - 3\varepsilon  ) \mathbb{E}\!\int_t^\infty\! |x(r)\mathrm e^{ Kr}|^{2}  \, \mathrm  dr\leq L_{\varepsilon,1} \mathbb{E} |x_t \mathrm e^{ K t}|^{2}  \\
&\qquad+  \mathbb{E}\!\int_t^\infty\! \[\frac {L_{\varepsilon,1}}{\varepsilon}|b(r)|^{2}+\(\!2+\frac{\|\mathbf{C}^{1}(r)\|^{2}}{\varepsilon}\!\)|\sigma(r)|^{2}
+\(\! 2+\frac{\|\mathbf{M}^{1}(r, \cdot)\|_\rho^{2}}{\varepsilon}\!\)\|\gamma(r, \cdot)\|_\rho^{2} \]\mathrm e^{2 K r} \mathrm  dr,\!
\end{aligned}
\end{equation}
where $L_{\varepsilon,1}:= 1+\frac{ 2\sup\limits_{r\in[0,\i)}\big(\|\mathbf{C}^{2}(r) \|^{2}+ \|\mathbf{M}^{2}(r, \cdot) \|_\rho^{2}\big)}{-2K + 2\kappa_1  -\varepsilon}$.
\medskip
Further, let  $\bar{x}(\cdot) \in L_{\mathbb{F}}^{2,  K}\left(t, \infty ; \mathbb{R}^{n}\right)$   be the solution to
MF-SDE with jumps \eqref{equ-SDE-1} with another initial state $\bar{x}_{t} \in L_{\mathcal{F}_t}^2(\Omega;\mathbb{R}^n)$ and   nonhomogeneous terms $ \bar{b}(\cdot), \bar{\sigma}(\cdot), \bar{\gamma}(\cdot)  $ satisfying {\bf{(H$_{1}$)}}.
Then, for any $ \varepsilon \in(0, \frac{-2K+2\kappa }{3})$, we have
\begin{equation}\label{equ-SDE-4}
\begin{aligned}
&(- 2K+ 2\kappa  - 3\varepsilon ) \mathbb{E}\!\int_t^\infty\! |(x(r)-\bar{x}(r))e^{Kr}|^{2}  \, \mathrm  dr\\
&\leq L_{\varepsilon,1}\mathbb{E} |(x_t- \bar{x}_t)e^{K t}|^{2}
+  \mathbb{E}\!\int_t^\infty \!\[\frac {L_{\varepsilon,1}}{\varepsilon}|b(r)-\bar{b}(r)|^{2}
+\(\! 2+\frac{\|\mathbf{C}^{1}(r)\|^{2}}{\varepsilon}\!\)|\sigma(r)-\bar{\sigma}(r)|^{2} \\
&\hskip5.2cm
+\(\!2+\frac{\|\mathbf{M}^{1}(r, \cdot)\|_\rho^{2}}{\varepsilon}\!\) \|\gamma(r, \cdot)-\bar{\gamma}(r, \cdot)\|_\rho^{2} \] \mathrm e^{2 K r} \,\mathrm  dr.
\end{aligned}
\end{equation}
\end{lemma}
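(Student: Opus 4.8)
The plan is to apply the Itô formula to $|x(s)\mathrm{e}^{Ks}|^2$, or more precisely to its two components after the separation technique of Yong, and then to absorb the mean-field terms and the jump/diffusion cross terms via Young's inequality with parameter $\varepsilon$. First I would use the decomposition $x(s)=x^{(1)}(s)+x^{(2)}(s)$ with $x^{(1)}(s):=x(s)-\mathbb{E}[x(s)]$ and $x^{(2)}(s):=\mathbb{E}[x(s)]$. Taking expectations in \eqref{equ-SDE-1} kills the martingale parts and gives a deterministic ODE for $x^{(2)}(\cdot)$ driven by the coefficient $A^2(\cdot)=A(\cdot)+\bar A(\cdot)$ and the forcing $\mathbb{E}[b(\cdot)]$; subtracting, $x^{(1)}(\cdot)$ solves an SDE with jumps driven by $A^1(\cdot)=A(\cdot)$, $\mathbf{C}^1(\cdot)=\mathbf{C}(\cdot)$, $\mathbf{M}^1(\cdot,\cdot)=\mathbf{M}(\cdot,\cdot)$ and the centered forcing terms. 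This is exactly where the two separate quantities $\kappa_1$ (controlling the mean component through $A^2$) and $\kappa_2$ (controlling the centered component through $A^1,\mathbf{C}^1,\mathbf{M}^1$) enter, and $\kappa=\min\{\kappa_1,\kappa_2\}$ is the common decay rate.

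Next I would compute, for fixed $T<\infty$ (using the local integrability $\mathbb{E}[\sup_{[t,T]}|x(s)|^2]<\infty$ already recorded after \textbf{(H$_1$)}), the Itô differential of $\mathrm{e}^{2Kr}|x^{(2)}(r)|^2$ and of $\mathbb{E}[\mathrm{e}^{2Kr}|x^{(1)}(r)|^2]$. For the mean part the drift produces $\langle (A^2(r)+A^2(r)^\top+2KI)x^{(2)},x^{(2)}\rangle\mathrm{e}^{2Kr}$ plus a cross term with $\mathbb{E}[b(r)]$; here I bound $\langle(A^2+A^2{}^\top)y,y\rangle$ from above using $A^2(r)^2+A^2(r)^{2\top}$ and the definition of $\kappa_1$ (an elementary eigenvalue inequality $\langle(\Lambda+\Lambda^\top)y,y\rangle\le \frac12\lambda_{\max}(\Lambda^2+\Lambda^{2\top})+\frac12|y|^2$-type bound, tuned so that the constant matches $-2\kappa_1$), and peel off an $\varepsilon$ from Young's inequality on the $b$-term. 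For the centered part, the Itô formula with the compensated Poisson integrals contributes, besides the drift, the quadratic variation terms $|\mathbf{C}^1(r)x^{(1)}(r)+\cdots|^2$ from the Brownian part and $\int_E|\mathbf{M}^1(r,e)x^{(1)}(r-)+\cdots|^2\varrho(\mathrm{d}e)$ from the jumps; expanding the squares, the pure $x^{(1)}$-pieces combine with the drift into $\langle(\text{that symmetric matrix in }\kappa_2)x^{(1)},x^{(1)}\rangle$, bounded above by $-2\kappa_2|x^{(1)}|^2$, while the mixed terms with $\sigma,\gamma$ and the forcing $b^{(1)}$ are split by Young's inequality — each mixed term of the form $2\langle \mathbf{C}^1 x^{(1)},\sigma\rangle$ is bounded by $\varepsilon|x^{(1)}|^2+\varepsilon^{-1}\|\mathbf{C}^1\|^2|\sigma|^2$ etc., which is where the coefficients $2+\|\mathbf{C}^1(r)\|^2/\varepsilon$ and $2+\|\mathbf{M}^1(r,\cdot)\|_\rho^2/\varepsilon$ in \eqref{equ-SDE-3} come from (the lone $2$ absorbing the "bare" $|\sigma|^2,\|\gamma\|_\rho^2$). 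Integrating from $t$ to $T$, using $x\in L^{2,K}_{\mathbb{F}}$ together with Remark~\ref{Re-SDE-1} so that the boundary term at $T$ vanishes as $T\to\infty$, and recombining $\mathbb{E}|x|^2=\mathbb{E}|x^{(1)}|^2+|x^{(2)}|^2$ with $\kappa=\min\{\kappa_1,\kappa_2\}$, yields \eqref{equ-SDE-3}; the constant $L_{\varepsilon,1}$ arises from closing the estimate for the mean component (whose decay rate is $\kappa_1\ge\kappa$) back in terms of the common $\kappa$, which accounts for the denominator $-2K+2\kappa_1-\varepsilon$ and the $\sup$ of $\|\mathbf{C}^2\|^2+\|\mathbf{M}^2\|_\rho^2$ appearing there. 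One caveat I would handle carefully: Lemma~\ref{Le-SDE-1} is stated as if membership $x(\cdot)\in L^{2,K}_{\mathbb{F}}$ were a conclusion rather than a hypothesis, so the argument must be run first on $[t,T]$ to get a $T$-uniform bound on $\mathbb{E}\int_t^T|x\mathrm{e}^{Kr}|^2\mathrm{d}r$ with constants independent of $T$, and only then let $T\to\infty$ by monotone convergence to deduce both the integrability and \eqref{equ-SDE-3} simultaneously.

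Finally, \eqref{equ-SDE-4} follows verbatim by linearity: the difference $x(\cdot)-\bar x(\cdot)$ solves \eqref{equ-SDE-1} with initial state $x_t-\bar x_t$ and nonhomogeneous terms $b-\bar b$, $\sigma-\bar\sigma$, $\gamma-\bar\gamma$ (and, crucially, with the \emph{same} homogeneous coefficients $A,\bar A,\mathbf{C},\bar{\mathbf{C}},\mathbf{M},\bar{\mathbf{M}}$, hence the same $\kappa_1,\kappa_2,\kappa,L_{\varepsilon,1}$), so applying the already-proved estimate \eqref{equ-SDE-3} to this difference process gives \eqref{equ-SDE-4} with no further work. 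The main obstacle is the bookkeeping in the second step: getting the eigenvalue bounds to land on exactly the combinations defining $\kappa_1$ and $\kappa_2$ (in particular handling the quadratic-variation contribution of the jump integral, $\int_E|\mathbf{M}^1 x^{(1)}+\bar{\mathbf{M}}^1\mathbb{E}[x]+\gamma|^2\varrho(\mathrm{d}e)$, and separating its $x^{(1)}$-part cleanly from the cross terms), and tracking the $\varepsilon$-dependence so that the final coefficients are precisely $-2K+2\kappa-3\varepsilon$ on the left (the $3\varepsilon$ being one $\varepsilon$ each from the $b$-, $\sigma$-, and $\gamma$-cross terms) and $L_{\varepsilon,1}/\varepsilon$, $2+\|\mathbf{C}^1\|^2/\varepsilon$, $2+\|\mathbf{M}^1\|_\rho^2/\varepsilon$ on the right; the separation technique and the vanishing of the boundary term are comparatively routine given Remark~\ref{Re-SDE-1}.
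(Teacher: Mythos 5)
Your proposal matches the paper's proof in all essentials: the same separation $x=x^{(1)}+x^{(2)}$ via Yong's technique, It\^o's formula applied to each exponentially weighted component, Young's inequality with parameter $\varepsilon$ producing exactly the coefficients $-2K+2\kappa-3\varepsilon$, $L_{\varepsilon,1}/\varepsilon$, $2+\|\mathbf{C}^1\|^2/\varepsilon$, $2+\|\mathbf{M}^1\|_\rho^2/\varepsilon$, the feeding of the $x^{(2)}$-bound (denominator $-2K+2\kappa_1-\varepsilon$) into the $x^{(1)}$-estimate to form $L_{\varepsilon,1}$, the $T$-uniform bound followed by $T\to\infty$, and the difference estimate by linearity. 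The only slip is in your description of the drift bound for the mean component: $A^{2}(s)$ here denotes $A(s)+\bar A(s)$ (a superscript label, not a square), and the needed step is simply $\langle(A^{2}+A^{2\top})y,y\rangle\le\lambda_{\max}(A^{2}+A^{2\top})\,|y|^2\le-2\kappa_1|y|^2$ straight from the definition of $\kappa_1$ --- the ``$\tfrac12\lambda_{\max}(\Lambda^2+\Lambda^{2\top})+\tfrac12|y|^2$''-type inequality you invoke is neither needed nor correct as written.
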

\ms
\begin{proof}
Firstly, let us make an observation. By \eqref{equ-SDE-1}, we have $x(\cdot)=x^{(1)}(\cdot)  + x^{(2)}(\cdot) $ with%
$$
\left\{
\begin{aligned}
&\!\! \mathrm d x^{(1)}(s) =\big(A^{1}(s)x^{(1)}(s)  + b^{(1)}(s)\big) \, \mathrm ds
+\sum\limits_{i=1}^d\big(C_i^{1}(s) x^{(1)}(s)+ C^{2}_i(s) x^{(2)}(s) +\sigma_i(s)\big)  \mathrm dW_i(s) \\
&\!\! \hskip1.5cm
+\ds\sum\limits_{j=1}^l\int_E \big(M_j^{1}(s, e) x^{(1)}(s-)+ {M}^{2}_j(s, e)  x^{(2)}(s-)  +\gamma_j(s, e)\big) \widetilde{\mu}_j(\mathrm ds, \mathrm de), \q s\geq t,\\
&\!\! x^{(1)}(t) = x_t-\mathbb{E}{[x_t]},
\end{aligned}
\right.
$$
and
$$
\left\{
\begin{aligned}
&\!\! \mathrm dx^{(2)}(s) =\big( A^{2}(s) x^{(2)}(s) + b^{(2)}(s)\big) \, \mathrm ds , \quad s\geq t, \\
&\!\!  x^{(2)}(t)  = \mathbb{E}{[x_t]}.
\end{aligned}
\right.
$$

For any  $\varepsilon>0$,  applying It\^o's formula to $\big| x^{(1)}(\cdot) \mathrm e^{ K \cdot}\big|^{2}$, we get
$$
\begin{aligned}
& \mathbb{E}|x^{(1)}(T) \mathrm e^{K T}|^2- \mathbb{E}|(x_t-\mathbb{E}{[x_t]}) \mathrm  e^{K t}|^2\\
&=\mathbb{E}\int_t^T\[\big\langle \big(A^{1}(r)+ A^{1}(r)^{\top}+2KI\big)x^{(1)}(r), x^{(1)}(r)\big\rangle
\big\rangle  +  \big|\mathbf{C}^1(r) x^1(r)+\mathbf{C}^2(r)  x^2(r)+\sigma(r)\big|^2 \\
&\hskip0.6cm
+\|\mathbf{M}^{1}(r,\cd) x^{(1)}(r)+\mathbf{M}^{2}(r,\cd)  x^{(2)}(r)+\gamma(r, \cd)\|_\rho^2 + 2\big\langle x^{(1)}(r), b^{(1)}(r) \big\rangle\] \mathrm e^{2 K r}\, \mathrm  dr \\
&=\mathbb{E}\int_t^T\[\big\langle\big(A^{1}(r)+ A^{1}(r)^{\top}+\mathbf{C}^{1}(r)^{\top}\mathbf{C}^{1}(r)+\int_E \mathbf{M}^{1}(s,e)^\top\varrho(\mathrm de)\mathbf{M}^{1}(s,e)+2KI\big) x^{(1)}(r), x^{(1)}(r)\big\rangle \\
&\hskip0.6cm
+\big|\mathbf{C}^{2}(r)  x^{(2)}(r)+\sigma(r)\big|^2
+2\big\langle \mathbf{C}^{1}(r)x^{(1)}(r), \sigma(r)\big\rangle+ 2\big\langle x^{(1)}(r), b^{(1)}(r)\big\rangle\\
&\hskip0.6cm
+ \|\mathbf{M}^{2}(r,\cd)  x^{(2)}(r)+\gamma(r, \cd) \|_\rho^2
+2\langle \mathbf{M}^{1}(r,\cd)x^{(1)}(r), \gamma(r, \cd)\rangle_\rho \] \mathrm e^{2 K r}\, \mathrm dr \\
%
&\leq \mathbb{E}\int_t^T \[ \big(2K-2\kappa_2+3\varepsilon \big)|x^{(1)}(r)|^{2}
+2\big(\|\mathbf{C}^{2}(r) \|^{2}+   \|\mathbf{M}^{2}(r,\cdot) \| _\rho^{2} \big)|x^{(2)}(r)|^{2}\\
&\hskip0.6cm+\frac{1}{\varepsilon}|b^{(1)}(r) |^{2}
+\Big(2+\frac{\|\mathbf{C}^{1}(r)\|^{2}}{\varepsilon}\Big)|\sigma(r)|^{2}
+\Big(2+\frac { \|\mathbf{M}^{1}(r,\cdot) \| ^{2}_\rho}{\varepsilon} \Big) \|\gamma(r, \cdot) \|_\rho^{2} \] \mathrm e^{2 K r} \,\mathrm  dr.
\end{aligned}
$$
Therefore, when $  K \leq  K_1$,  we get
\begin{equation}\label{equ-SDE-5}
\begin{aligned}
&(- 2K+2\kappa_2  -3\varepsilon )\mathbb{E}\int_t^T  |x^{(1)}(r)  e^{ K r}|^{2}   \, \mathrm  dr\\
&\leq \mathbb{E}|(x_t-\mathbb{E}{[x_t]}) e^{K t}|^2+2\sup\limits_{r\in[t,\i)}\big(\|\mathbf{C}^{2}(r) \|^{2}+ \|\mathbf{M}^{2}(r, \cdot) \|_\rho^{2}\big)\mathbb{E}\int_t^T
 |x^{(2)}(r)|^{2} \,\mathrm  dr\\
& +\mathbb{E}\int_t^T\[\frac{1}{\varepsilon}|b^{(1)}(r)|^{2}
\!+\!\Big(2+\frac{\|\mathbf{C}^{1}(r)\|^{2}}{\varepsilon}\Big)|\sigma(r)|^{2}
\!+\!\Big(2+\frac{\|\mathbf{M}^{1}(r, \cdot)\|_\rho^{2}}{\varepsilon}\Big) \|\gamma(r, \cdot) \|_\rho^{2} \] \mathrm e^{2 K r} \,\mathrm  dr.\!\!
\end{aligned}
\end{equation}

%
Similarly, for any  $\varepsilon>0$,    we have
$$\begin{aligned}
& | x^{(2)}(T)  \mathrm e^{ K T}  |^{2}  \\
&=|\mathbb{E}{[x_t \mathrm e^{ K t}]} |^{2}
+ \int_t^T\[\big\langle \big(A^{2}(r)+A^{2}(r)^{\top}\!+2KI\big) x^{(2)}(r) ,  x^{(2)}(r)  \big\rangle
+ 2\big\langle  x^{(2)}(r), b^{(2)}(r) \big\rangle \]e^{2 K r}\,\mathrm  dr\\
&\leq |\mathbb{E}{[x_t  \mathrm e^{ K t}]}  |^{2}
+(2K-2\kappa_1+\varepsilon)
\int_t^T |x^{(2)}(r) \mathrm  e^{K r}   |^{2}\,\mathrm  dr+ \frac{1}{\varepsilon}\int_t^T |b^{(2)}(r)  \mathrm e^{K r}|^{2}\, \mathrm  dr.
\end{aligned}$$
 Then, when $ K\leq K_1$,   we get
\begin{equation}\label{equ-SDE-6}\begin{aligned}
&  (-2K+2\kappa_1  -\varepsilon)
\int_t^T | x^{(2)}(r) \mathrm e^{K r} |^{2}\,\mathrm  dr \leq |\mathbb{E}{[x_t \mathrm  e^{ K t}]}  |^{2}
+  \frac{1}{\varepsilon}\int_t^T |b^{(2)}(r) \mathrm  e^{K r} |^{2}\, \mathrm  dr.
\end{aligned}\end{equation}

Further, when  $  \varepsilon \in(0, \frac{-2K+2\kappa }{3})$, the combination of  \eqref{equ-SDE-5} and \eqref{equ-SDE-6} yields
$$
\begin{aligned}
&(- 2K+ 2\kappa - 3\varepsilon  ) \mathbb{E}\int_t^T|x(r)\mathrm e^{ K r}|^{2}\mathrm  \, \mathrm  dr\\
&\leq  (- 2K+ 2\kappa_1- \varepsilon  ) \int_t^T |x^{(2)}(r) \mathrm e^{ K r} |^{2}  \, \mathrm  dr
+  (- 2K+2 \kappa_2- 3\varepsilon  )\mathbb{E}\int_t^T |x^{(1)}(r) \mathrm e^{ K r}|^{2}   \, \mathrm  dr\\
&\leq    \mathbb{E} |x_t  \mathrm e^{ K t}  |^{2}
+  \frac{1}{\varepsilon}\int_t^T\mathbb{E} | b(r)  \mathrm e^{ K r} |^{2}\, \mathrm  dr
 + 2\sup\limits_{r\in[0,\i)}\big(\|\mathbf{C}^{2}(r) \|^{2}+ \|\mathbf{M}^{2}(r, \cdot) \|_\rho^{2}\big)\mathbb{E}\int_t^T
 |x^{(2)}(r) \mathrm e^{ K r}|^{2} \,\mathrm  dr\\
&\quad +\mathbb{E}\int_t^T\[ \Big(2+\frac{\|\mathbf{C}^{1}(r)\|^{2}}{\varepsilon}\Big)|\sigma(r)|^{2}
+\Big(2+\frac{\|\mathbf{M}^{1}(r, \cdot)\|_\rho^{2}}{\varepsilon}\Big) \|\gamma(r, \cdot) \|_\rho^{2} \] \mathrm e^{2 K r} \,\mathrm  dr
\\
&\leq L_{\varepsilon,1}\mathbb{E} |x_t \mathrm e^{ K t}|^{2}   \\
&\quad+  \mathbb{E}\int_t^T \(\frac {L_{\varepsilon,1}}{\varepsilon}|b(r)|^{2}+\(2+\frac{\|\mathbf{C}^{1}(r)\|^{2}}{\varepsilon}\)|\sigma(r)|^{2}
+\(2+\frac{\|\mathbf{M}^{1}(r, \cdot)\|_\rho^{2}}{\varepsilon}\)\|\gamma(s, \cdot)\|_\rho^{2} \) \mathrm e^{2 Kr} \,\mathrm  dr,
\end{aligned}
$$
where  $L_{\varepsilon,1}:= 1+\frac{ 2\sup\limits_{r\in[0,\i)}\big(\|\mathbf{C}^{2}(r) \|^{2}+ \|\mathbf{M}^{2}(r, \cdot) \|_\rho^{2}\big)}{-2K + 2\kappa_1  -\varepsilon}$. Letting $T\to\i$, we get the desired  \eqref{equ-SDE-3}.
And the density of real numbers brings us $x(\cdot)\in L_{\mathbb{F}}^{2, K}\left(t, \i; \mathbb{R}^{n}\right)$ with
$ K \leq K_1$ and $K< \kappa$.

\smallskip

Finally, if setting $\hat{x}(\cdot)=x(\cdot)-\bar{x}(\cdot)$,
and applying the same approach  to $|\mathbb{E}{[\hat{x}(\cdot)]}\mathrm e^{ K \cdot}|^{2}$ and
$|(\hat{x}(\cdot)-\mathbb{E}{[\hat{x}(\cdot)]})\mathrm e^{ K \cdot}|^{2}$ on the interval $[t, T]$,
we can get the estimate \eqref{equ-SDE-4}.

\end{proof}

\begin{remark}\label{Re-2-3} \sl Let us  consider a special  version of \eqref{equ-SDE-1} as follows,
\begin{equation}\label{equ-SDE-c-b}
\left\{
\begin{aligned}
&\!\!  \mathrm dx(s)\! =\!\big(A  x(s) + \bar{A} \mathbb{E}{[x(s)]} + b(s)\big)  \mathrm ds
+ \sum\limits_{i=1}^d\big(C_i(s) x(s)+\bar{C}_i(s)\mathbb{E}{[x(s)]}+\sigma_i(s)\big)  \mathrm dW_i(s),\   s\geq t, \!\!\!\!\!\\
&\!\!   x(t) \!= \!x_t.
\end{aligned}
\right.
\end{equation}
Note that, the   coefficients are constant matrices and the jumps disappear.
There are  some existing  results about the global integrability of \eqref{equ-SDE-c-b} with $b(\cdot)\in L_\mathbb{F}^2(t,\infty;\mathbb{R}^n),$  $\sigma(\cdot)\in L_\mathbb{F}^2(t,\infty;\mathbb{R}^{nd})$, referring to \cite{HLY-2015, LSY-2021}.
The  notion of  stabilizability was used therein.
%
%
%
Our Lemma \ref{Le-SDE-1}  also works in  this case.
 $b(\cdot)\in L_\mathbb{F}^{2}(t,\infty;\mathbb{R}^n)$, $\sigma(\cdot)\in L_\mathbb{F}^{2}(t,\infty;\mathbb{R}^n)$  and
\begin{equation} \label{A+A<0}
  A+\bar A +
( A +\bar A)^\top  <0, \quad  A+ A^{\top}+ \mathbf{C}^{\top}\mathbf{C}<0,
\end{equation}
can guarantee  $x(\cdot)\in L_\mathbb{F}^2(t,\infty;\mathbb{R}^n)$.

However,  when   $b(\cdot)\equiv b\in\dbR^n$, $\sigma (\cdot)\equiv\sigma \in \dbR^{nd}$ ($b$, $\sigma$ being the nonzero constant matrices), the   results in \cite{HLY-2015, LSY-2021}  are invalid  due to $b(\cdot)  \notin   L_\mathbb{F}^2(0,\infty;\mathbb{R}^n),$ $  \sigma(\cdot)\notin L_\mathbb{F}^2(0,\infty;\mathbb{R}^{nd}).$
But for any  $K_1< 0$, we know  $b \in L_\mathbb{F}^{2,K_1}(0,\infty;\mathbb{R}^n)$, $\sigma \in L_\mathbb{F}^{2,K_1}(0,\infty;\mathbb{R}^n)$. Combined with \eqref{A+A<0}, Lemma \ref{Le-SDE-1} can be applied to bring us  $x(\cdot)\in L_\mathbb{F}^{2,K}(t,\infty;\mathbb{R}^n)$ with $K\leq K_1$.

%
\end{remark}


%

 %
%
\subsection{Infinite horizon  mean-field BSDEs with jumps} 
\label{sub:subsection_name}
%
For the same matrix-valued functions $A(\cdot)$, $\bar A(\cdot), $ $\mathbf{C}(\cdot)$, $\bar {\mathbf{C}}(\cdot), $ $\mathbf{M}(\cdot,\cdot)$, $\bar {\mathbf{M}}(\cdot,\cdot)$ in {Assumption {\bf (H$_{1}$)}}, we consider   the following linear infinite horizon    MF-BSDE with jumps,
\begin{equation}\label{equ-BSDE-1}
\begin{aligned}
&\!\mathrm dy(s)=-\Big\{ A_K(s)^{\top}y(s)+\bar{A}(s)^{\top}\mathbb{E}{[y(s)]}+\sum\limits_{i=1}^dC_i(s)^{\top}z_i(s)
+\sum\limits_{i=1}^d\bar{C}_i(s)^{\top}\mathbb{E}{[z_i(s)]}
 \\
&\hskip1.4cm+\sum\limits_{j=1}^l\int_E  M_j(s, e)^{\top}k_j(s, e)  \rho_j(\mathrm de)
+\sum\limits_{j=1}^l\int_E  \bar{M}_j(s, e)^{\top} \mathbb{E}{[k_j(s, e)]} \rho_j(\mathrm de)+ f(s) \Big\} \, \mathrm ds \\
&\hskip1.4cm+\sum\limits_{i=1}^dz_i(s) \, \mathrm dW_i(s)
+\sum\limits_{j=1}^l\int_E k_j(s, e)   \widetilde{\mu}_j(\mathrm ds, \mathrm de), \quad   s \geq t,
\end{aligned}
\end{equation}
where $A_K(\cdot):=A(\cdot)+2KI$,  $I$ is the $(n \times n)$-identity matrix. For convenience, we also denote $z=(z_1^\top,\dots,z_d^\top)^\top $ and $k=(k_1^\top,\dots,k_l^\top)^\top $.

%
The  triple of processes $\vartheta(\cdot):=(y(\cdot), z(\cdot), k(\cdot, \cdot)) \in L_{\mathbb{F}}^{2, K}\left(t, \infty ; \mathbb{R}^{n}\right) \times L_{\mathbb{F}}^{2, K}\left(t, \infty ; \mathbb{R}^{n d}\right) \times \cK_{\mathbb{F}}^{2, K}(t, \infty ;$ $\left.\mathbb{R}^{nl}\right)$  is said to be the solution
to   MF-BSDE with jumps \eqref{equ-BSDE-1} if and only if, for any $T \geq t$,
\begin{equation}\label{equ-BSDE-2}
\begin{aligned}
& y(s)\!=\!y(T)\!+\int_s^T\Big\{A_K(r)^{\top}y(r)\!+\!\bar{A}(r)^{\top}\mathbb{E}{[y(r)]}\!+\!\mathbf{C}(r)^{\top}z(r)\!
+\!\bar{\mathbf{C}}(r)^{\top}\mathbb{E}{[z(r)]}
 \\
&\hskip1.2cm\!+\! \int_E \big( \mathbf{M}(r, e)^{\top} \varrho(\mathrm de)k(r, e)
+  \bar{\mathbf{M}}(r, e)^{\top}\varrho(\mathrm de) \mathbb{E}{[k(r, e)]} \big)\!+\! f(r) \Big\} \, \mathrm dr \!\\
&\hskip1.2cm
\!-\! \sum\limits_{i=1}^d\int_{s}^{T} z_i(r) \, \mathrm dW_i(r)\!-\! \sum\limits_{j=1}^l\int_{s}^{T}\!\int_E k_j(r, e)   \widetilde{\mu}_j(\mathrm dr, \mathrm de),\q s\in[t,T].
\end{aligned}
\end{equation}
Similarly to Remark \ref{Re-SDE-1}, we have the following result.

\begin{remark}\label{Re-BSDE-1}\sl
Let {\bf{(H$_{1}$)}} hold. If the solution $ (y(\cdot),z(\cdot),k(\cdot, \cdot)) $ to MF-BSDE with jumps \eqref{equ-BSDE-1} belongs to $ L_{\mathbb{F}}^{2, K}\left(t, \infty ; \mathbb{R}^{n}\right)   \times L_{\mathbb{F}}^{2, K}\left(t, \infty ; \mathbb{R}^{n d}\right) \times \cK_{\mathbb{F}}^{2, K}\left(t, \infty ;\mathbb{R}^{nl}\right) $, then
$
 \lim\limits_{s \rightarrow \i} \mathbb{E}\big[ |y(s) \mathrm e^{ K s} |^{2}\big]=0.
$
\end{remark}

Before studying the wellposedness of  BSDE \eqref{equ-BSDE-1},  we firstly study the following a priori estimate.

\begin{lemma}\label{Le-BSDE-1}\sl
Assume that {\bf{(H$_{1}$)}}  holds and $f(\cdot) \in L_{\mathbb{F}}^{2,  K_1}(0, \infty ; \mathbb{R}^{n})$ with some $K_1\in\mathbb{R}$.
Let $K$ satisfy $K\leq K_1$,  $K<\kappa $,  and  $ \vartheta(\cdot)\in L_{\mathbb{F}}^{2, K}(t, \infty ; \mathbb{R}^{n}) \times L_{\mathbb{F}}^{2, K}(t, \infty ; \mathbb{R}^{n d}) \times \cK_{\mathbb{F}}^{2, K}(t, \infty ; \mathbb{R}^{n  l})$ be a  solution to MF-BSDE \eqref{equ-BSDE-1}. Then, for any $\varepsilon\in(0, 2\kappa_2 -2K)$, we have
\begin{equation}\label{equ-BSDE-3-1}
\begin{aligned}
&\mathbb{E} |y(t) \mathrm e^{ K t} |^{2}+ (2\kappa -2K-\varepsilon) \mathbb{E} \int_{t}^{\infty}|y(s) \mathrm e^{ K s} |^{2} \mathrm ds+ \mathbb{E}\int_{t}^{\infty}\( |z(s)\mathrm e^{K s}|^{2}+    \|k(s, \cdot)\mathrm e^{K s}\|_\rho^{2}  \)  \mathrm ds\\
&\leq \(L_{\varepsilon, 2} +L_{ \varepsilon,3}+\frac1\varepsilon\) \mathbb{E}\int_{t}^{\infty} |f(s)e^{ K s}|^{2} \mathrm ds,
\end{aligned}
\end{equation}
%
%
where  $L_{\varepsilon ,2}:=\frac{1}{\varepsilon}\( 1+ \frac{\sup\limits_{s\in[0,\infty)}\big(\|\mathbf{C}^{2}(s) \|^{2}+\|  \mathbf{M}^{2}(s, \cdot) \|_\rho^{2}\big) }{\varepsilon}  \) $ and
$L_{\varepsilon,3}:=\frac{ 2\sup\limits_{s\in[0,\infty)}(1 + 2\|\mathbf{C}^{1}(s)\|^{2} + 2\|\mathbf{M}^{1}(s,\cdot)\|_\rho^{2})}{\varepsilon (2\kappa_2-2K-\varepsilon)} + 2 $.

\smallskip
Moreover, let  $\bar{\vartheta}(\cdot) \in L_{\mathbb{F}}^{2, K}(t, \infty ; \mathbb{R}^{n}) \times
L_{\mathbb{F}}^{2, K}(t, \infty ; \mathbb{R}^{n d}) \times \cK_{\mathbb{F}}^{2, K}(t, \infty ;$
$\mathbb{R}^{n l})$ be a solution to the MF-BSDE with jumps \eqref{equ-BSDE-1} with another  nonhomogeneous term    $\bar{f}(\cdot)
\in  L_{\mathbb{F}}^{2,  K_1}(0, \infty ; \mathbb{R}^{n})$.  Then, for any  $\varepsilon\in(0,2\kappa_2 -2K )$, we have
\begin{equation}\label{equ-BSDE-4-1}
\begin{aligned}
&\mathbb{E} |(y(t)-\bar{y}(t)) \mathrm e^{K t} |^{2}+(2\kappa -2K-\varepsilon) \mathbb{E} \int_{t}^{\infty}|(y(s)-\bar{y}(s)) \mathrm e^{ K s} |^{2} \mathrm ds\\
&+  \mathbb{E}\int_{t}^{\infty}\( |(z(s) - \bar{z}(s)) \mathrm e^{K s}|^{2}+ \|(k(s, \cdot)-\bar{k}(s, \cdot)) \mathrm e^{K s}\|_\rho^{2}  \)  \,\mathrm ds \\
&\leq \(L_{\varepsilon, 2} +L_{\varepsilon, 3} +\frac1\varepsilon\) \mathbb{E}\int_{t}^{\infty} |(f(s)-\bar{f}(s))e^{ K s}|^{2} \mathrm ds.
\end{aligned}
\end{equation}
%

\end{lemma}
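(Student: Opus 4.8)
The plan is a weighted It\^o--energy argument exploiting the mean/fluctuation separation of \cite{Yong-2013}, exactly as in Lemma~\ref{Le-SDE-1}, but adapted to the backward equation. Since $\vartheta(\cdot)$ is assumed to be a solution in the required weighted spaces, in particular $y\in L_{\mathbb F}^{2,K}(t,\infty;\mathbb R^n)$, so Remark~\ref{Re-BSDE-1} gives $\mathbb E|y(T)\mathrm e^{KT}|^2\to0$ as $T\to\infty$, which will replace the (absent) terminal condition. First I would write $y=y^{(1)}+y^{(2)}$ with $y^{(2)}(\cdot)=\mathbb E[y(\cdot)]$. Taking expectations in \eqref{equ-BSDE-1} shows $y^{(2)}$ solves the deterministic backward equation
$$\mathrm dy^{(2)}(s)=-\Big((A^2(s)+2KI)^\top y^{(2)}(s)+\mathbf C^2(s)^\top z^{(2)}(s)+\int_E\mathbf M^2(s,e)^\top\varrho(\mathrm de)k^{(2)}(s,e)+f^{(2)}(s)\Big)\mathrm ds,$$
and subtracting this from \eqref{equ-BSDE-1} shows $y^{(1)}$ solves the MF-BSDE with jumps with generator $A_K(s)^\top y^{(1)}+\mathbf C(s)^\top z^{(1)}+\int_E\mathbf M(s,e)^\top\varrho(\mathrm de)k^{(1)}+f^{(1)}(s)$ and martingale part $\sum_iz_i\mathrm dW_i+\sum_j\int_E k_j\widetilde\mu_j$, so that its quadratic variation still produces the full $|z|^2+\|k\|_\rho^2$. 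Moreover $\mathbb E|y^{(1)}(T)\mathrm e^{KT}|^2\le\mathbb E|y(T)\mathrm e^{KT}|^2$ and $|y^{(2)}(T)\mathrm e^{KT}|^2\le\mathbb E|y(T)\mathrm e^{KT}|^2$, so both pieces also have vanishing terminal energy.

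The core step is to apply It\^o's formula to $|y^{(1)}(s)\mathrm e^{Ks}|^2$ on $[t,T]$, take expectations after the usual localization (the compensated stochastic integrals become genuine martingales after stopping, and one passes to the limit by monotone/dominated convergence from the weighted integrability of the solution), and observe that $A_K=A+2KI$ turns $-2K|y^{(1)}|^2+2\langle y^{(1)},A_K^\top y^{(1)}\rangle$ into $\langle(A+A^\top)y^{(1)},y^{(1)}\rangle+2K|y^{(1)}|^2$. The very definition of $\kappa_2$ gives $\langle(A+A^\top)y^{(1)},y^{(1)}\rangle\le(2K-2\kappa_2)|y^{(1)}|^2-|\mathbf C(s)y^{(1)}|^2-\|\mathbf M(s,\cdot)y^{(1)}\|_\rho^2$; completing the square,
$$-|\mathbf C(s)y^{(1)}|^2+2\langle\mathbf C(s)y^{(1)},z^{(1)}\rangle\le|z^{(1)}|^2,\qquad-\|\mathbf M(s,\cdot)y^{(1)}\|_\rho^2+2\langle\mathbf M(s,\cdot)y^{(1)},k^{(1)}\rangle_\rho\le\|k^{(1)}\|_\rho^2,$$
cancels $\mathbb E\int(|z^{(1)}|^2+\|k^{(1)}\|_\rho^2)$ against the quadratic-variation term on the left, leaving the nonnegative remainder $\mathbb E\int(|z^{(2)}|^2+\|k^{(2)}\|_\rho^2)$ there; handling $2\langle y^{(1)},f^{(1)}\rangle$ by Young's inequality with parameter $\varepsilon$ and letting $T\to\infty$ yields
$$\mathbb E|y^{(1)}(t)\mathrm e^{Kt}|^2+(2\kappa_2-2K-\varepsilon)\mathbb E\!\int_t^\infty\!|y^{(1)}\mathrm e^{Kr}|^2\mathrm dr+\mathbb E\!\int_t^\infty\!\big(|z^{(2)}\mathrm e^{Kr}|^2+\|k^{(2)}\mathrm e^{Kr}\|_\rho^2\big)\mathrm dr\le\frac1\varepsilon\mathbb E\!\int_t^\infty\!|f^{(1)}\mathrm e^{Kr}|^2\mathrm dr,$$
which is exactly where the hypothesis $\varepsilon<2\kappa_2-2K$ is used.

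For the mean part I would run ordinary calculus on $|y^{(2)}(s)\mathrm e^{Ks}|^2$, use $\langle(A^2+(A^2)^\top)y^{(2)},y^{(2)}\rangle\le-2\kappa_1|y^{(2)}|^2$, and bound the cross terms $2\langle\mathbf C^2(s)y^{(2)},z^{(2)}\rangle$, $2\langle\mathbf M^2(s,\cdot)y^{(2)},k^{(2)}\rangle_\rho$, $2\langle y^{(2)},f^{(2)}\rangle$ by Young's inequality, arranging the total penalty on $|y^{(2)}|^2$ to be $\le\varepsilon$ at the cost of weights $O(\sup_s\|\mathbf C^2(s)\|^2/\varepsilon)$, $O(\sup_s\|\mathbf M^2(s,\cdot)\|_\rho^2/\varepsilon)$, $1/\varepsilon$ in front of $|z^{(2)}|^2$, $\|k^{(2)}\|_\rho^2$, $|f^{(2)}|^2$; after $T\to\infty$ this controls $|y^{(2)}(t)\mathrm e^{Kt}|^2+(2\kappa_1-2K-\varepsilon)\int_t^\infty|y^{(2)}\mathrm e^{Kr}|^2\mathrm dr$ by those quantities, into which one inserts the $\mathbb E\int(|z^{(2)}|^2+\|k^{(2)}\|_\rho^2)$-bound from the previous display. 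Adding the two estimates and using $\kappa=\min\{\kappa_1,\kappa_2\}$ together with $\mathbb E|\zeta|^2=\mathbb E|\zeta^{(1)}|^2+|\zeta^{(2)}|^2$ already gives $\mathbb E|y(t)\mathrm e^{Kt}|^2+(2\kappa-2K-\varepsilon)\mathbb E\int_t^\infty|y\mathrm e^{Kr}|^2\mathrm dr\le(L_{\varepsilon,2}+\tfrac1\varepsilon)\mathbb E\int_t^\infty|f\mathrm e^{Kr}|^2\mathrm dr$, i.e. the $L_{\varepsilon,2}+\tfrac1\varepsilon$ part of \eqref{equ-BSDE-3-1}. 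Finally, to recover the \emph{full} $\mathbb E\int(|z|^2+\|k\|_\rho^2)$, I would return to It\^o on $|y(s)\mathrm e^{Ks}|^2$ itself, this time estimating every $z$-- and $k$--cross term by Young with a small parameter so that $\tfrac12\mathbb E\int(|z|^2+\|k\|_\rho^2)$ survives on the left while the resulting $|y|^2$--terms (with coefficient controlled by $\sup_s(1+\|\mathbf C^1(s)\|^2+\|\mathbf M^1(s,\cdot)\|_\rho^2)$) are absorbed via the $\mathbb E\int|y\mathrm e^{Kr}|^2$--bound just obtained; doubling produces the $\mathbb E\int(|z|^2+\|k\|_\rho^2)$--term and the $L_{\varepsilon,3}$--contribution, completing \eqref{equ-BSDE-3-1}. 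Estimate \eqref{equ-BSDE-4-1} is then immediate: by linearity $(y-\bar y,z-\bar z,k-\bar k)$ solves \eqref{equ-BSDE-1} with nonhomogeneous term $f-\bar f$, so \eqref{equ-BSDE-3-1} applies to it verbatim.

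The step I expect to be the real obstacle is the mean equation. In the forward setting of Lemma~\ref{Le-SDE-1} the mean process $x^{(2)}$ obeys a pure ODE, whereas here $y^{(2)}$ genuinely carries $z^{(2)}$ and $k^{(2)}$, and $\kappa_1$ is built only from $A^2$; hence there is no self-contained dissipation estimate for $y^{(2)}$, and one must first squeeze the bound on $\mathbb E\int(|z^{(2)}|^2+\|k^{(2)}\|_\rho^2)$ out of the zero-mean equation and only then feed it in --- this is the source of the $\mathbf C^2$, $\mathbf M^2$ dependence in $L_{\varepsilon,2}$. Calibrating the Young parameters in these coupled steps so that the $|y|^2$--coefficient comes out exactly $2\kappa-2K-\varepsilon$ and the $f$--coefficient exactly $L_{\varepsilon,2}+L_{\varepsilon,3}+\tfrac1\varepsilon$ is the only genuinely delicate bookkeeping; the localization argument and the fact that $y^{(1)}\mathrm e^{K\cdot}$ and $\mathbb E[y]\mathrm e^{K\cdot}$ lie in the relevant $L^2$--spaces (so that the terminal energies vanish) are routine and I would only sketch them.
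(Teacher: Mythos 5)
Your proposal is correct and follows essentially the same route as the paper's proof: the mean/fluctuation decomposition of $(y,z,k)$, It\^o's formula on $|y^{(1)}\mathrm e^{K\cdot}|^2$ with completion of squares to leave $\mathbb{E}\int(|z^{(2)}|^2+\|k^{(2)}\|_\rho^2)$ on the left, feeding that bound into the deterministic equation for $y^{(2)}$ (the coupling you correctly flag as the source of $L_{\varepsilon,2}$), and a second It\^o pass to recover the full $\mathbb{E}\int(|z|^2+\|k\|_\rho^2)$ with the $L_{\varepsilon,3}$ contribution. The only cosmetic difference is that the paper runs the final pass on $|y^{(1)}\mathrm e^{K\cdot}|^2$ (whose martingale part is already the full $(z,k)$) rather than on $|y\mathrm e^{K\cdot}|^2$, which keeps the bookkeeping marginally cleaner.
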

\begin{proof}
Firstly, we decompose  the solution of MF-BSDE with jumps \eqref{equ-BSDE-1} as follows, $$(y(\cdot),z(\cdot),k(\cdot,\cdot))=(y^{(1)}(\cdot)+y^{(2)}(\cdot),z^{(1)}(\cdot)+z^{(2)}(\cdot),k^{(1)}(\cdot,\cdot)+k^{(2)}(\cdot,\cdot)),$$ where
\begin{equation}\label{equ-BSDE-5}
\left\{\begin{aligned}
&\!\! \mathrm d y^{(1)}(s)\!=\!-\Big\{ A_K(s)^{\top}y^{(1)}(s) + \mathbf{C}^{1}(s)^{\top} z^{(1)}(s)
+ \int_E \mathbf{M}^{1}(s, e)^{\top} \varrho(\mathrm de)k^{(1)}(s, e)\!\\
&  \hskip2cm
+ f(s)-\mathbb{E}{[f(s)]} \Big\}   \mathrm ds
+\sum\limits_{i=1}^dz_i(s) \, \mathrm dW_i(s)
+\sum\limits_{j=1}^l\int_E k_j(s, e)   \widetilde{\mu}_j(\mathrm ds, \mathrm de),\q s\geq t,\!\\
&\!\!\ \mathrm d y^{(2)}(s)\! =\!-\Big\{  (2KI+ A^{2}(s))^{\top} y^{(2)}(s) +\mathbf{C}^{2}(s)^{\top} z^{(2)}(s)+\int_E \!\mathbf{M}^{2}(s, e)^{\top} \varrho(\mathrm de)k^{(2)}(s, e)  +\\
&\hskip2cm
+  \mathbb{E}{[f(s)]} \Big\}   \mathrm ds . \!\!
\end{aligned}\right.
\end{equation}
For any $T >t$ and $K\leq K_1$, applying It\^o's formula   to
$| y^{(1)}(\cdot)  \mathrm e^{K \cdot}|^{2}$ and $|y^{(2)}(\cdot)  \mathrm e^{K \cdot}|^{2}$
on the interval $[t, T]$, respectively, we get
\begin{equation}\label{equ-BSDE-8}
\begin{aligned}
&\mathbb{E}| y^{(1)}(t)  \mathrm e^{K t}|^{2}
=\mathbb{E}| y^{(1)}(T) \mathrm e^{K T}|^{2}
+\mathbb{E}\int_{t}^{T}\Big\{ \langle ( A^{1}(s)+A^{1}(s)^{\top}\! +2KI )y^{(1)}(s), y^{(1)}(s)\rangle \\
&\
+ 2 \langle y^{(1)}(s), \mathbf{C}^{1}(s)^{\top}z^{(1)}(s) +\int_E \mathbf{M}^{1}(s, e)^{\top} \varrho(\mathrm de)k^{(1)}(s, e)+f(s)-\mathbb{E}{[f(s)]} \rangle
 \\
& \  - |z(s)|^2 -\| k(s, \cdot)\|_\rho^2 \Big\} \mathrm e^{2 K s}\,\mathrm ds \\
& =\mathbb{E}| y^{(1)}(T) \mathrm e^{K T}|^{2}+ \mathbb{E}\int_{t}^{T}\Big\{-|\mathbf{C}^{1}(s)  y^{(1)}(s) - z^{(1)}(s)|^{2}
 - \|\mathbf{M}^{1}(s,\cdot)  y^{(1)}(s) - k^{(1)}(s,\cdot)\|_\rho^{2}     \\
& \ +\langle (A^{1}(s)+A^{1}(s)^{\top}+\mathbf{C}^{1}(s)^{\top}\mathbf{C}^{1}(s)  + \int_E \mathbf{M}^{1}(s,e)^\top\varrho(\mathrm de)\mathbf{M}^{1}(s,e) +2KI) y^{(1)}(s), y^{(1)}(s)\rangle\!\!\!\!\\
&\ + 2\langle y^{(1)}(s), f(s) \rangle  - | z^{(2)}(s)|^2   - \|k^{(2)}(s, \cdot)\|_\rho^2 \Big\}\mathrm e^{2 K s}\mathrm ds\\
& \leq \mathbb{E}| y^{(1)}(T) \mathrm e^{K T}|^{2}+ \mathbb{E}\int_{t}^{T}\Big\{ \langle (-2\kappa_2+2K ) y^{(1)}(s), y^{(1)}(s)\rangle + 2\langle y^{(1)}(s), f(s) \rangle   \Big\}\mathrm e^{2 K s}\mathrm ds\\
 &\ -  \mathbb{E}\int_{t}^{T}\Big\{| z^{(2)}(s)\mathrm e^{K s}|^2 + \|k^{(2)}(s, \cdot)\mathrm e^{K s}\|_\rho^2 \Big\}\mathrm ds,
%
%
%
\end{aligned}
\end{equation}
and
\begin{equation}\label{equ-BSDE-9}
\begin{aligned}
&| y^{(2)}(t) \mathrm e^{K t}|^{2}
 =|y^{(2)}(T) \mathrm e^{K T}|^{2}
+\int_{t}^{T}\Big\{ \langle (A^{2}(s)+ A^{2}(s)^{\top}\!+2KI )y^{(2)}(s),  y^{(2)}(s)\rangle
 \\
&\quad
+2 \langle y^{(2)}(s), \mathbf{C}^{2}(s)^{\top} z^{(2)}(s)+\int_E \mathbf{M}^{2}(s, e)^{\top} \varrho(\mathrm de)k^{(2)}(s, e)+ \mathbb{E}{[f(s)]}\rangle \Big\} \mathrm e^{2 K s}\mathrm ds\\
 &\leq |y^{(2)}(T) \mathrm e^{K T}|^{2}
+\int_{t}^{T}\Big\{ \langle (-2\kappa_1+2K )y^{(2)}(s),  y^{(2)}(s)\rangle
  \\
&\quad
+2 \langle y^{(2)}(s),\mathbf{C}^{2}(s)^{\top} z^{(2)}(s)+ \int_E \mathbf{M}^{2}(s, e)^{\top} \varrho(\mathrm de)k^{(2)}(s, e)+ \mathbb{E}{[f(s)]}\rangle \Big\} \mathrm e^{2 K s}\mathrm ds .
\end{aligned}
\end{equation}
Then, from  \eqref{equ-BSDE-8}, for any $\varepsilon>0$, we get
\begin{equation}\label{equ-BSDE-12}
\begin{aligned}
&\mathbb{E}|y^{(1)}(t) \mathrm e^{K t}|^{2}+ (2\kappa_2-2K-\varepsilon) \mathbb{E}\int_{t}^{T}|y^{(1)}(s) \mathrm e^{ K s}|^2\mathrm ds\\
&+ \int_{t}^{T}\Big\{| z^{(2)}(s)\mathrm e^{K s}|^2 +\|k^{(2)}(s,\cdot)\mathrm e^{K s}\|_\rho^2 \Big\}\mathrm ds \leq  \mathbb{E}|y^{(1)}(T) \mathrm e^{K T}|^{2}   +   \frac1\varepsilon \mathbb{E}\int_{t}^{T} | f(s)     \mathrm e^{K s}|^2 \mathrm ds.
\end{aligned}
\end{equation}

Note that, for any $s\geq t$, using the H\"{o}lder  inequality and Cauchy inequality, we have
$$
\begin{aligned}
&
 2 \langle y^{(2)}(s) , \int_E \mathbf{M}^{2}(s, e)^{\top} \varrho(\mathrm de)k^{(2)}(s, e)\rangle
 %
 \leq \varepsilon|y^{(2)}(s)| ^2+\frac 1\varepsilon\Big| \int_E \mathbf{M}^{2}(s, e)^{\top} \varrho(\mathrm de)k^{(2)}(s, e)\Big|^2\\
 &\leq \varepsilon|y^{(2)}(s)|^2+\frac 1\varepsilon\Big| \sum\limits_{j=1}^l\(\int_E  \|\mathbf{M}^{2}_j(s,e)  \|^2\rho_j(\mathrm de)  \)^\frac12 \cdot\( \int_E  | k^{(2)}_j(s, e) |^2\rho_j(\mathrm de)\)^\frac12\Big|^2\\
%
 &\leq \varepsilon|y^{(2)}(s)| ^2 +\frac 1\varepsilon  \|  \mathbf{M}^{2}(s, \cdot) \|_\rho^{2}\cdot\|k^{(2)}(s,\cdot)\|_\rho^{2}.
\end{aligned}
$$
%
Then, from  \eqref{equ-BSDE-9}, when $2\kappa_2-2K- \varepsilon>0$, we  get
\begin{equation}\label{equ-BSDE-13}
\begin{aligned}
&|y^{(2)}(t) \mathrm e^{K t}|^{2}
+(2\kappa_1-2K-3 \varepsilon)\int_{t}^{T} |y^{(2)}(s) \mathrm e^{ K s}|^2\mathrm ds \\
&\!\leq \!|y^{(2)}(T) \mathrm e^{K T}|^{2}
+\frac1\varepsilon\!\int_{t}^{T}\!\Big\{ \|\mathbf{C}^{2}(s)\|^{2} |z^{(2)}(s) |^{2}
+ \|  \mathbf{M}^{2}(s, \cdot) \|_\rho^{2}\!\cdot\!\|k^{(2)}(s,\cdot)\|_\rho^{2}
+|\mathbb{E}{[f(s)]} |^2 \Big \}  \mathrm e^{2 K s}\mathrm ds \!\!\\
&\!\leq\! \varepsilon L_{\varepsilon,2} |y(T) \mathrm e^{K T}|^{2} +L_{\varepsilon,2} \int_{t}^{T} \mathbb{E}  |f(s)  \mathrm e^{ K s}|^{2} \mathrm ds,
\end{aligned}
\end{equation}
where $L_{\varepsilon ,2}:=\frac{1}{\varepsilon}\( 1+ \frac{\sup\limits_{s\in[0,\infty)}\{\|\mathbf{C}^{2}(s) \|^{2}+\|  \mathbf{M}^{2}(s, \cdot) \|_\rho^{2}\} }{\varepsilon}  \) $.

Combining inequalities \eqref{equ-BSDE-12}, \eqref{equ-BSDE-13} and letting $ T \to \infty $,  we   obtain
\begin{equation}\label{y1+y2}
\begin{aligned}
&\mathbb{E} |y(t) \mathrm e^{ K t} |^{2}+(2\kappa -2K-3\varepsilon) \mathbb{E} \int_{t}^{\infty}|y(s) \mathrm e^{ K s} |^{2} \mathrm ds\\
&\leq \mathbb{E} |y^{(1)}(t)e^{K t}|^{2}+(2\kappa_2-2K- \varepsilon) \mathbb{E} \int_{t}^{\infty} |y^{(1)}(s) \mathrm e^{ K s} |^{2}  \mathrm ds\\
&\q+ |y^{(2)}(t)e^{K t}|^{2} +(2\kappa_1-2K-3\varepsilon) \int_{t}^{\infty} |y^{(2)}(s) \mathrm e^{ K s} |^{2}\mathrm ds\\
&\leq \(L_{\varepsilon, 2}  +\frac1\varepsilon\) \mathbb{E}\int_{t}^{\infty} |f(s)e^{ K s}|^{2}  \mathrm ds  .
\end{aligned}
\end{equation}
 On the other hand,  from the first equality in  \eqref{equ-BSDE-8}, we have
$$
\begin{aligned}
&\mathbb{E}| y^{(1)}(t) \mathrm e^{K t}|^{2}
+ \mathbb{E}\int_{t}^{T}\( |z(s) \mathrm e^{ K s}|^{2}+ \|k(s, \cdot) \mathrm e^{ K s}\|_\rho^{2} \)\mathrm ds \\
&=\mathbb{E}|y^{(1)}(T)  \mathrm e^{K T}|^{2}
+\mathbb{E}\int_{t}^{T}\Big\{ \langle (A^{1}(s)+A^{1}(s)^{\top}\!+\!2KI )y^{(1)}(s), y^{(1)}(s)\rangle
 \\
&\quad
+ 2\langle y^{(1)}(s),\mathbf{C}^{1}(s)^{\top}z^{(1)}(s) +\int_E \mathbf{M}^{1}(s, e)^{\top} \varrho(\mathrm de)k^{(1)}(s, e) + f(s) \rangle \Big\} \mathrm e^{2 K s}\mathrm ds\\
&\leq \mathbb{E}|y^{(1)}(T) \mathrm e^{K T}|^{2}
+ \mathbb{E}\int_{t}^{T}\Big\{( 1+  2\|\mathbf{C}^{1}(s)\|^{2} + 2 \|\mathbf{M}^{1}(s, \cdot)\|_\rho^{2}) |y^{(1)}(s)|^{2}+
 |f(s)|^{2}
 \\
&\quad +\frac{1}{2} |z^{(1)}(s)|^{2} +\frac{1}{2}  \|k^{(1)}(s,\cdot)\|_\rho^{2}
 \Big\} \mathrm e^{2 K s}\mathrm ds \\
&\leq \mathbb{E}|y(T) \mathrm e^{K T}|^{2}
+ \mathbb{E}\int_{t}^{T}\Big\{( 1+ 2 \|\mathbf{C}^{1}(s)\|^{2} + 2 \|\mathbf{M}^{1}(s, \cdot)\|_\rho^{2}) |y^{(1)}(s)|^{2}+
 |f(s)|^{2}
 \\
&\quad+\frac{1}{2} |z(s)|^{2}  +\frac{1}{2} \|k(s,\cdot)\|_\rho^{2}
 \Big\} \mathrm e^{2 K s}\mathrm ds,
\end{aligned}
$$
where we have used  $ K <  \kappa_2$ and $\mathbb{E} |z^{(1)}(\cdot)|^{2} \leq \mathbb{E} |z(\cdot)|^{2} $,
$\displaystyle \mathbb{E}\|k^{(1)}(\cdot,\cdot)\|_\rho^{2}\leq \mathbb{E}\|k(\cdot,\cdot)\|_\rho^{2}$.

Letting $ T \to \infty $ and using \eqref{equ-BSDE-12}, when $\varepsilon\in(0,2\kappa_2-2K)$, we get
\begin{equation}\label{equ-BSDE-15}
\begin{aligned}
& \mathbb{E}\int_{t}^{\infty}\( |z(s)\mathrm e^{K s}|^{2}+ \|k(s, \cdot)\mathrm e^{K s}\|_\rho^{2}\) \mathrm ds  \\
&\leq  2\mathbb{E}\int_{t}^{\infty}    \(( 1+2\|\mathbf{C}^{1}(s)\|^{2} +2 \|\mathbf{M}^{1}(s,\cdot)\|_\rho^{2})|y^{(1)}(s)|^{2} +  |f(s)|^{2} \)\mathrm e^{2 K s}\mathrm ds \\
&\leq L_{\varepsilon,3} \mathbb{E}\int_{t}^{\infty} |f(s) \mathrm e^{  K  s}|^{2}  \mathrm ds,
\end{aligned}
\end{equation}
with $L_{\varepsilon, 3}:= \frac{ 2\sup\limits_{s\in[0,\infty)}(1 + 2\|\mathbf{C}^{1}(s)\|^{2} + 2\|\mathbf{M}^{1}(s,\cdot)\|_\rho^{2})}{\varepsilon (2\kappa_2-2K-\varepsilon)} + 2 $.\par
Then,  \eqref{equ-BSDE-3-1} is derived   by combing  \eqref{y1+y2} with \eqref{equ-BSDE-15}.
Further, \eqref{equ-BSDE-4-1} is  obvious by using   the equation  of $ (y(\cdot)-\bar{y}(\cdot) ,z(\cdot)-\bar{z}(\cdot) ,k(\cdot,\cdot)-\bar{k}(\cdot,\cdot) )$ and its corresponding  estimate like \eqref{equ-BSDE-3-1}.

\end{proof}
\begin{lemma}\label{Le-BSDE-2}\sl
Assume {\bf{(H$_{1}$)}}  holds and $f(\cdot) \in L_{\mathbb{F}}^{2,  K_1}(0, \infty ; \mathbb{R}^{n})$ with $K_1\in\mathbb{R}$.
Let   $K\leq K_1$ and  $K<\kappa $,
Then, MF-BSDE with jumps  \eqref{equ-BSDE-1} admits a unique solution $(y(\cdot), z(\cdot), k(\cdot, \cd))\in L_{\mathbb{F}}^{2, K}\left(t, \infty ; \mathbb{R}^{n}\right) \times L_{\mathbb{F}}^{2, K}\left(t, \infty ; \mathbb{R}^{n d}\right) \times \cK_{\mathbb{F}}^{2, K}(t, \infty ;$ $\left.\mathbb{R}^{n l}\right)$.
\end{lemma}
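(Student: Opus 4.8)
The plan is to construct the solution as the $\cL^{2,K}_{\mathbb F}$-limit of a sequence of finite-horizon approximations, using the a priori estimate of Lemma \ref{Le-BSDE-1} to force convergence; uniqueness will then come essentially for free. Indeed, if $\vartheta(\cdot)$ and $\bar\vartheta(\cdot)$ are two solutions in the stated space, they are associated with the same nonhomogeneous term $f(\cdot)$, so \eqref{equ-BSDE-4-1} applied with $\bar f=f$ (legitimate since $K\le K_1$ and $K<\kappa$) gives $y(\cdot)\equiv\bar y(\cdot)$, $z(\cdot)\equiv\bar z(\cdot)$ and $k(\cdot,\cdot)\equiv\bar k(\cdot,\cdot)$.

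For existence, I would first fix an integer $N>t$ and solve \eqref{equ-BSDE-1} on the bounded interval $[t,N]$ with terminal condition $y(N)=0$. Under {\bf (H$_1$)} every coefficient matrix is bounded, and $f(\cdot)\in L^{2}_{\mathbb F}(t,N;\mathbb R^{n})$ since on $[t,N]$ the weight $\mathrm e^{K_1 s}$ is bounded above and below; as the driver of \eqref{equ-BSDE-1} is affine in $(y,\mathbb E[y],z,\mathbb E[z],k,\mathbb E[k])$ with bounded (hence Lipschitz) coefficients, the classical wellposedness theory for linear finite-horizon mean-field BSDEs with jumps (cf.\ \cite{WYY-2019,Yu-2017}) provides a unique solution $(y^N(\cdot),z^N(\cdot),k^N(\cdot,\cdot))$ on $[t,N]$. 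Extending it by zero on $(N,\infty)$, it lies in $L^{2,K}_{\mathbb F}(t,\infty;\mathbb R^{n})\times L^{2,K}_{\mathbb F}(t,\infty;\mathbb R^{nd})\times\cK^{2,K}_{\mathbb F}(t,\infty;\mathbb R^{nl})$.

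Next I would show $\{(y^N,z^N,k^N)\}_N$ is Cauchy in that product space. Fixing $N'>N>t$, on $[t,N]$ the difference $(y^{N'}-y^N,z^{N'}-z^N,k^{N'}-k^N)$ solves the homogeneous version of \eqref{equ-BSDE-1} (with $f\equiv0$) carrying the terminal datum $y^{N'}(N)$ at time $N$, while on $[N,N']$ it coincides with $(y^{N'},z^{N'},k^{N'})$, which solves \eqref{equ-BSDE-1} with terminal value $0$ at $N'$. Re-running the proof of Lemma \ref{Le-BSDE-1} on these bounded intervals — but now keeping the terminal boundary term that was eliminated there via Remark \ref{Re-BSDE-1} — yields, with the same $\varepsilon$ and the same structural constants, $\|(y^{N'}-y^N,z^{N'}-z^N,k^{N'}-k^N)\|^2_{\cL^{2,K}_{\mathbb F}(t,N)}\le C\,\mathbb E|y^{N'}(N)\mathrm e^{KN}|^2$, and it bounds this boundary quantity, as well as the $\cL^{2,K}_{\mathbb F}(N,N')$-norm of $(y^{N'},z^{N'},k^{N'})$, by $C\,\mathbb E\int_N^{N'}|f(s)\mathrm e^{Ks}|^2\,\mathrm ds$, where $C$ depends only on the time-uniform bounds in {\bf (H$_1$)} and on $\kappa_1,\kappa_2,K,\varepsilon$, not on $N,N'$. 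Altogether $\|(y^{N'}-y^N,z^{N'}-z^N,k^{N'}-k^N)\|^2_{\cL^{2,K}_{\mathbb F}(t,\infty)}\le C\,\mathbb E\int_N^{\infty}|f(s)\mathrm e^{Ks}|^2\,\mathrm ds$, and since $K\le K_1$ implies $f(\cdot)\in L^{2,K}_{\mathbb F}(0,\infty;\mathbb R^{n})$ this tail tends to $0$ as $N\to\infty$; hence the sequence converges to some $(y,z,k)$ in the required space.

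It remains to verify that $(y,z,k)$ solves \eqref{equ-BSDE-1}: for fixed $T\ge t$ and any $N>T$, $(y^N,z^N,k^N)$ satisfies the integral identity \eqref{equ-BSDE-2} on $[t,T]$, and letting $N\to\infty$ on this bounded interval the Lebesgue-integral terms converge in $L^1(\Omega)$ by the time-uniform boundedness of the coefficients together with $\cL^{2,K}_{\mathbb F}(t,T)$-convergence, while the Brownian and Poisson stochastic integrals converge by the It\^o isometry; hence $(y,z,k)$ satisfies \eqref{equ-BSDE-2} for every $T\ge t$, i.e.\ it is a solution of \eqref{equ-BSDE-1}. The step I expect to be the main obstacle is the finite-horizon a priori estimate with the terminal term retained, and in particular checking that its constant $C$ is genuinely uniform in the horizon $N$ — this is precisely why one cannot merely quote the $[t,\infty)$ statement of Lemma \ref{Le-BSDE-1} but must revisit its proof on $[t,N]$; fortunately the decomposition there into the $(1)$- and $(2)$-parts, and all the resulting bounds, involve only time-uniform data, so the uniformity does go through, after which the rest is routine.
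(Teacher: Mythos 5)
Your proposal is correct and follows essentially the same route as the paper: uniqueness is read off from the stability estimate \eqref{equ-BSDE-4-1} of Lemma \ref{Le-BSDE-1}, and existence is obtained by the standard truncation-to-finite-horizon scheme of \cite{Peng-Shi} (zero terminal data at $N$, a horizon-uniform a priori bound yielding Cauchyness in $\cL_{\mathbb{F}}^{2,K}$, then passage to the limit), which is precisely the argument the paper invokes and omits. Your added care in checking that the constants in the finite-horizon version of Lemma \ref{Le-BSDE-1} are uniform in $N$ is exactly the point that makes the cited method go through here.
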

The a priori estimates \eqref{equ-BSDE-3-1}  can bring us the uniqueness of the solution to  MF-BSDE with jumps \eqref{equ-BSDE-1}. The proof of the existence is similar to the method in \cite{Peng-Shi} so that we omit it here, also referring to \cite{Shi-Zhao-2020, Wei-Yu-2021, Yu-2017}.

\section{Infinite Horizon Mean-Field  Linear Quadratic  Optimal Control Problem with Time-varying Coefficients} 
\label{sec:LQ}
 In this section, we study the infinite horizon mean-field linear quadratic  optimal control problem with time-varying coefficients. For any initial pair $(t,x_t)\in[0,\infty)\times L_{\mathcal{F}_t}^2(\Omega;\mathbb{R}^n)$,  the state equation \eqref{state}
and the   cost functional \eqref{cost} with  $K \in \mathbb{R}$, $u(\cd)\in L_\mathbb{F}^{2,K}(0,\infty;\mathbb{R}^m)$ will be considered.
For the coefficients $A$, $\bar A,$    $C_i$, $ \bar{C}_i,$  $M_j$, $ \bar{M}_j$ $(i=1,\cdots, d,$ $j=1,\cdots,l)$ of \eqref{state},   we still assume {\bf (H$_{1}$)}. In addition,  we also need the following condition  on the other  coefficients.

\ms
\noindent \textbf{Assumption (H$_{2}$)}
\!\! \! $B(\cdot), \bar{B}(\cdot)\in L^{\infty}\left(0, \infty ; \mathbb{R}^{n \times m}\right)$,
$\mathbf{D}(\cdot),  \bar{\mathbf{D}}(\cdot) \in L^{\infty}\left(0, \infty ; \mathbb{R}^{(n d) \times m}\right)$,
$\mathbf{N}(\cdot,\cdot), \bar {\mathbf{N}}(\cdot,\cdot)  \in L^{\infty}\left(0, \infty ; L_\rho^2\left(E;  \mathbb{R}^{(nl) \times m}\right)\right)$,
$Q(\cdot), \bar{Q}(\cdot)\in L^{\infty}\left(0, \infty ; \dbS^{n}\right)$,
$R(\cdot),  \bar{R}(\cdot)\in L^{\infty}\left(0, \infty ; \dbS^{m}\right)$  and
$S(\cdot),  \bar{S}(\cdot)\in L^{\infty}\left(0, \infty ; \mathbb{R}^{m \times n}\right)$,
where
$\mathbf{D}(\cdot)=(D_1(\cdot)^\top,\dots,D_d(\cdot)^\top)^\top, $ $\bar{\mathbf{D}}(\cdot)=(\bar D_1(\cdot)^\top,\dots,\bar D_d(\cdot)^\top)^\top $,
$\mathbf{N}(\cdot,\cdot)=(N_1(\cdot,\cdot)^\top,\dots,N_l(\cdot,\cdot)^\top)^\top$, $\bar {\mathbf{N}}(\cdot,\cdot)=(\bar N_1(\cdot,\cdot)^\top,\dots,\bar N_l(\cdot,\cdot)^\top)^\top  $.

\smallskip
%
 Let us  assume {\bf(H$_{1}$)}, {\bf(H$_{2}$)} and  $K<\kappa $ with $\kappa $  here being the one in \eqref{kappa}.  By Lemma \ref{Le-SDE-1}, for any initial pair $(t,x_t)\in[0,\infty)\times L_{\mathcal{F}_t}^2(\Omega;\mathbb{R}^n)$ and $u(\cdot)\in L_{\mathbb{F}}^{2, K}(t, \infty ; \mathbb{R}^{m}) $, the state equation  \eqref{state} admits a unique solution
$x(\cdot) \equiv x(\cdot; t, x_{t}, u(\cdot)) \in L_{\mathbb{F}}^{2, K}(t, \infty ; \mathbb{R}^{n})$. This also ensures
the cost functional $J^{K}\left(t, x_{t} ; u(\cdot)\right) $ in \eqref{cost} to make sense.

Denote  $\mathcal{U}^{K}[t, \infty):=L_{\mathbb{F}}^{2, K}\left(t, \infty ; \mathbb{R}^{m}\right)$, then we call  $u(\cdot)\in \mathcal {U}^K[t,\infty)$ as the admissible control. Sequentially,
  $x(\cdot ; t, x_{t}, u(\cdot))$, $(u(\cdot), x(\cdot ; t, x_{t}, u(\cdot)))$ are said to be     the admissible   state and the admissible pair, respectively.
Based on these preparations,    Problem (MF-LQ) formulated in Introduction   makes sense.
%

%
%
 \begin{definition}\sl
If there exists a (unique) admissible control $u^{*}(\cdot)$ satisfying \eqref{equ-MFLQ-5},  Problem (MF-LQ) is said to be  (uniquely) open-loop solvability at $\left(t, x_{t}\right)$. Such $u^{*}(\cdot)$   is called an open-loop optimal control of Problem (MF-LQ) at $\left(t, x_{t}\right)$,
the state $x^{*}(\cdot) \equiv x\left(\cd ; t, x_{t}, u^{*}(\cdot)\right)$ is called the corresponding optimal state process,
and $\left(u^{*}(\cdot), x^{*}(\cdot)\right)$ is called an optimal pair of Problem (MF-LQ) at $\left(t, x_{t}\right)$.\par
\end{definition}

To characterize the open-loop optimal control  of  Problem (MF-LQ), we  assume the following positive definiteness condition.

\ss
\noindent$\textbf{Condition (PD).}$
$
R^1(\cdot) \gg 0, \q  R^2(\cdot) \gg 0,\q
\left(\begin{array}{cc}
Q^1(\cdot) & S^1(\cdot)^{\top} \\
S^1(\cdot) & R^1(\cdot)
\end{array}\right) \geq 0, \q
 \left(\begin{array}{cc}
 Q^2(\cdot) &  S^2(\cdot)^{\top} \\
 S^2(\cdot) &  R^2(\cdot)
\end{array}\right) \geq 0.
$

\ms

 Note that, the Schur's lemma implies  Condition {\bf (PD)} to be  equivalent to
\begin{equation}\label{PD-1}
R^1(\cdot) \gg 0, \q  R^2(\cdot) \gg 0,\q Q^1(\cdot)-S^1(\cdot)^{\top} R^1(\cdot)^{-1} S^1(\cdot) \geq 0,\q  Q^2(\cdot)-S^2(\cdot)^{\top}  R^2(\cdot)^{-1}  S^2(\cdot) \geq 0.
\end{equation}
%

The following is one of the main results, which works as the usual Pontryagin maximum principle.

\begin{lemma}\label{Le-MFLQ-1}\sl
Assume  {\bf{(H$_{1}$)}}, {\bf{(H$_{2}$)}} and Condition {\bf (PD)} hold. Given $K<\kappa $,
let $\left(t, x_{t}\right) \in[0, \infty) \times$ $L_{\mathcal{F}_{t}}^{2}\left(\Omega ; \mathbb{R}^{n}\right)$
be a pair of  initial data and
$\left(u^{*}(\cdot), x^{*}(\cdot)\right) \in \mathcal{U} ^{K}[t, \infty) \times L_{\mathbb{F}}^{2, K}\left(t, \infty ; \mathbb{R}^{n}\right)$ be an admissible pair.
Then,
  $u^{*}(\cdot)$ is an optimal control of Problem (MF-LQ) at $\left(t, x_{t}\right)$  if and only if
 $u^{*}(\cdot)$ satisfies
\begin{equation}\label{optimal}
\begin{aligned}
&B(\cdot)^{\top} y^{*}(\cdot) + \bar{B}(\cdot)^{\top} \mathbb{E}{[y^{*}(\cdot)]} + \mathbf{D}(\cdot)^{\top} z^{*}(\cdot)
+\bar{\mathbf{D}} (\cdot)^{\top} \mathbb{E}{[z^{*} (\cdot)]} + S(\cdot) x^{*}(\cdot) + \bar{S}(\cdot)\mathbb{E}{[x^{*}(\cdot)]}\\
&+\ds  \int_E\! \big(\mathbf{N} (\cdot, e)^{\top} \varrho(\mathrm de) k^{*}(\cdot,e)
+\bar{\mathbf{N}}(\cdot,e)^{\top} \varrho(\mathrm de)\mathbb{E}{[k^{*}(\cdot, e)]}\big)
+R(\cdot) u^{*}(\cdot) + \bar{R}(\cdot)\mathbb{E}{[u^{*}(\cdot)]}=0, \ \mathbb{P}\mbox{-a.s.}\!\!\!\!\!
\end{aligned}
\end{equation}
where $\left(y^{*}(\cdot), z^{*}(\cdot), k^{*}(\cdot, \cd)\right) \in L_{\mathbb{F}}^{2, K}\left(t, \infty ; \mathbb{R}^{n}\right) \times L_{\mathbb{F}}^{2, K}\left(t, \infty ; \mathbb{R}^{n d}\right)\times \cK_{\mathbb{F}}^{2, K}\left(t, \infty ; \mathbb{R}^{n l}\right)$ is the solution to  the following infinite horizon MF-BSDE with jumps
\begin{equation}\label{adjoint}
\begin{aligned}
&\mathrm dy^{*}(s)= -\Big\{ A_K(s)^{\top}  y^{*}(s)
                    +\bar{A}(s)^{\top}\mathbb{E}{[y^{*}(s)]} +   \mathbf{C}(s) ^{\top} z^{*}(s)+\bar {\mathbf{C}}(s)^{\top}\mathbb{E}{[z^{*}(s)]}   \\
&     + \int_E \big(\mathbf{M} (s,e) ^ {\top}\varrho (\mathrm de) k^{*}(s, e)
                    +\bar{\mathbf{M}} (s, e)^{\top} \varrho (\mathrm de) \mathbb{E}{[k^{*}(s, e)]}\big) +Q(s) x^{*}(s) + \bar{Q}(s)\mathbb{E}{[x^{*}(s)]}  \\
&    +S(s)^{\top} u^{*}(s)+\bar{S}(s)^{\top}\mathbb{E}{[u^{*}(s)]}\Big\} \, \mathrm ds
                    + \sum\limits_{i=1}^d  z_i(s) \, \mathrm dW_i(s)\!+\! \sum\limits_{j=1}^l \!\int_E k_j(s, e)   \widetilde{\mu}_j(\mathrm ds, \mathrm de), \quad   s \geq t.
\end{aligned}
\end{equation}
\end{lemma}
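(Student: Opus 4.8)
The plan is to run the classical convex--variational (Pontryagin) argument, with the two features peculiar to the infinite horizon handled by the a priori estimates of Section~2: the adjoint processes must be produced in the exponentially weighted spaces, and the ``terminal'' contribution at $s=\infty$ must be shown to vanish.

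First, fix a direction $v(\cdot)\in\mathcal{U}^{K}[t,\infty)$. By linearity of \eqref{state}, $x(\cdot;t,x_{t},u^{*}+\varepsilon v)=x^{*}(\cdot)+\varepsilon x^{v}(\cdot)$ for $\varepsilon\in\mathbb{R}$, where $x^{v}(\cdot)$ solves an equation of the form \eqref{equ-SDE-1} with zero initial value and $b=Bv+\bar B\,\mathbb{E}[v]$, $\sigma=\mathbf{D}v+\bar{\mathbf{D}}\,\mathbb{E}[v]$, $\gamma=\mathbf{N}v+\bar{\mathbf{N}}\,\mathbb{E}[v]$; by {\bf (H$_{1}$)}, {\bf (H$_{2}$)} and $K<\kappa$, Lemma~\ref{Le-SDE-1} gives $x^{v}(\cdot)\in L_{\mathbb{F}}^{2,K}(t,\infty;\mathbb{R}^{n})$. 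Since $g$ is a quadratic form and $\varepsilon\mapsto(x^{\varepsilon},u^{*}+\varepsilon v)$ is affine, $\varepsilon\mapsto J^{K}(t,x_{t};u^{*}+\varepsilon v)$ is a quadratic polynomial, namely $J^{K}(t,x_{t};u^{*}+\varepsilon v)=J^{K}(t,x_{t};u^{*})+\varepsilon\,\delta J(v)+\varepsilon^{2}\mathcal{Q}(v)$ with
\begin{equation*}
\delta J(v)=\mathbb{E}\int_{t}^{\infty}e^{2Ks}\Big[\big\langle x^{v},\,Qx^{*}+\bar Q\,\mathbb{E}[x^{*}]+S^{\top}u^{*}+\bar S^{\top}\mathbb{E}[u^{*}]\big\rangle+\big\langle v,\,Sx^{*}+\bar S\,\mathbb{E}[x^{*}]+Ru^{*}+\bar R\,\mathbb{E}[u^{*}]\big\rangle\Big]\,\mathrm ds,
\end{equation*}
the mean-field cross-terms being absorbed via $\mathbb{E}\langle\mathbb{E}[a],b\rangle=\mathbb{E}\langle a,\mathbb{E}[b]\rangle$, and $\mathcal{Q}(v)=\frac12\mathbb{E}\int_{t}^{\infty}e^{2Ks}g(s,x^{v},\mathbb{E}[x^{v}],v,\mathbb{E}[v])\,\mathrm ds$. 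Decomposing every random variable $\zeta$ as $\zeta^{(1)}+\zeta^{(2)}$, this last integral reduces to the sum of the quadratic form of the symmetric block matrix with blocks $Q^{1},S^{1},R^{1}$ evaluated at $(x^{v,(1)},v^{(1)})$ and of the one with blocks $Q^{2},S^{2},R^{2}$ evaluated at $(x^{v,(2)},v^{(2)})$; Condition~{\bf (PD)} makes both pointwise nonnegative, so $\mathcal{Q}(v)\ge0$. All these integrals converge by Cauchy--Schwarz, since every process sits in the relevant $L^{2,K}$-space.

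Next I would bring in the adjoint equation. The inhomogeneity $f(s):=Q(s)x^{*}(s)+\bar Q(s)\mathbb{E}[x^{*}(s)]+S(s)^{\top}u^{*}(s)+\bar S(s)^{\top}\mathbb{E}[u^{*}(s)]$ lies in $L_{\mathbb{F}}^{2,K}(t,\infty;\mathbb{R}^{n})$, so, taking $K_{1}=K$, Lemma~\ref{Le-BSDE-2} yields the unique solution $(y^{*},z^{*},k^{*})\in L_{\mathbb{F}}^{2,K}\times L_{\mathbb{F}}^{2,K}\times\cK_{\mathbb{F}}^{2,K}$ of \eqref{adjoint}. The heart of the matter is It\^o's formula for $e^{2Ks}\langle x^{v}(s),y^{*}(s)\rangle$ on $[t,T]$, followed by taking expectations. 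The shift $A_{K}=A+2KI$ in \eqref{adjoint} is chosen precisely so that the $2K$ produced by differentiating $e^{2Ks}$ annihilates the $2KI$ inside $A_{K}^{\top}y^{*}$; hence all the ``structural'' drift and covariation terms built from $A,\bar A,\mathbf{C},\bar{\mathbf{C}},\mathbf{M},\bar{\mathbf{M}}$ cancel (the barred ones after taking $\mathbb{E}$), and, using $x^{v}(t)=0$, one is left with
\begin{equation*}
\mathbb{E}\big[e^{2KT}\langle x^{v}(T),y^{*}(T)\rangle\big]=\mathbb{E}\int_{t}^{T}e^{2Ks}\Big[\big\langle v,\,B^{\top}y^{*}+\bar B^{\top}\mathbb{E}[y^{*}]+\mathbf{D}^{\top}z^{*}+\bar{\mathbf{D}}^{\top}\mathbb{E}[z^{*}]+\textstyle\int_{E}(\mathbf{N}^{\top}\varrho\,k^{*}+\bar{\mathbf{N}}^{\top}\varrho\,\mathbb{E}[k^{*}])\big\rangle-\big\langle x^{v},f\big\rangle\Big]\,\mathrm ds.
\end{equation*}
Letting $T\to\infty$, the boundary term tends to $0$ because $|\mathbb{E}[e^{2KT}\langle x^{v}(T),y^{*}(T)\rangle]|\le(\mathbb{E}|x^{v}(T)e^{KT}|^{2})^{1/2}(\mathbb{E}|y^{*}(T)e^{KT}|^{2})^{1/2}\to0$ by Remarks~\ref{Re-SDE-1} and~\ref{Re-BSDE-1}, while the two integrals converge absolutely. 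Substituting the resulting identity $\mathbb{E}\int_{t}^{\infty}e^{2Ks}\langle x^{v},f\rangle\,\mathrm ds=\mathbb{E}\int_{t}^{\infty}e^{2Ks}\langle v,\,B^{\top}y^{*}+\cdots\rangle\,\mathrm ds$ into $\delta J(v)$ collapses it to $\delta J(v)=\mathbb{E}\int_{t}^{\infty}e^{2Ks}\langle v(s),\Lambda(s)\rangle\,\mathrm ds$, where $\Lambda(\cdot)$ is exactly the left-hand side of \eqref{optimal} and $\Lambda(\cdot)\in L_{\mathbb{F}}^{2,K}(t,\infty;\mathbb{R}^{m})$.

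Finally, I would read off the two implications. If \eqref{optimal} holds, then $\Lambda\equiv0$, so $\delta J(v)=0$ for every $v$; taking $\varepsilon=1$ and $v=u-u^{*}$ for an arbitrary admissible $u$ gives $J^{K}(t,x_{t};u)=J^{K}(t,x_{t};u^{*})+\mathcal{Q}(u-u^{*})\ge J^{K}(t,x_{t};u^{*})$, so $u^{*}$ is optimal. Conversely, if $u^{*}$ is optimal, then $\varepsilon\,\delta J(v)+\varepsilon^{2}\mathcal{Q}(v)\ge0$ for all $\varepsilon\in\mathbb{R}$, which forces $\delta J(v)=0$ for every $v$; hence $\mathbb{E}\int_{t}^{\infty}e^{2Ks}\langle v,\Lambda\rangle\,\mathrm ds=0$ for all $v\in\mathcal{U}^{K}[t,\infty)$, and the choice $v=\Lambda$ yields $\mathbb{E}\int_{t}^{\infty}|\Lambda(s)e^{Ks}|^{2}\,\mathrm ds=0$, i.e.\ \eqref{optimal}. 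I expect the main obstacle to be the passage $T\to\infty$ in the duality identity: unlike the finite-horizon case there is no terminal datum to annihilate the boundary term, so one must instead invoke the global $L^{2,K}$-integrability together with the weighted second-moment decay of Remarks~\ref{Re-SDE-1}--\ref{Re-BSDE-1}, which is ultimately where the dissipativity assumption $K<\kappa$ is used.
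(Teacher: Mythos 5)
Your proposal is correct and follows essentially the same route as the paper: the convex variational expansion of $J^{K}$, the It\^o duality between the linearized state $x^{v}$ and the adjoint triple $(y^{*},z^{*},k^{*})$ with the boundary term at $T\to\infty$ killed by the weighted second-moment decay of Remarks \ref{Re-SDE-1} and \ref{Re-BSDE-1}, and the conclusion via nonnegativity of the second-order term under Condition {\bf (PD)}. If anything, you make explicit two steps the paper leaves implicit, namely that $\mathcal{Q}(v)\ge 0$ after the $\zeta^{(1)}/\zeta^{(2)}$ decomposition and that $\delta J(v)=0$ for all $v$ forces $\Lambda=0$ by testing with $v=\Lambda\in L_{\mathbb{F}}^{2,K}(t,\infty;\mathbb{R}^{m})$.
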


\begin{proof}
 For any $u(\cdot) \in \mathcal{U} ^{K}[t, \infty)$ and any $\epsilon \in \mathbb{R}$, denote $x^{\epsilon}(\cdot) \in L_{\mathbb{F}}^{2, K}\left(t, \infty ; \mathbb{R}^{n}\right)$  by  the  solution  to \eqref{state} under $u^{*}(\cdot)+\epsilon u(\cdot) \in$ $\mathcal{U} ^{K}[t, \infty)$. Then $x^0(s):=\frac{x^{\epsilon}(s)-x^{*}(s)}{\epsilon}$, $s\geq t$ satisfies
\begin{equation}\label{equ-MFLQ-10}
\left\{
\begin{aligned}
& \!\! \mathrm dx^0(s)=\big(A(s) x^0(s)+\bar{A}(s)\mathbb{E}{[x^0(s)]}+B(s) u(s)+\bar{B}(s)\mathbb{E}{[u(s)]}\big)\, \mathrm ds \\
&  \!\!   +\sum\limits_{i=1}^d\big(C_i(s) x^0(s)\!+\!\bar{C}_i(s)\mathbb{E}{[x^0(s)]}\!+\!D_i(s) u(s)\!+\!\bar{D}_i(s)\mathbb{E}{[u(s)]}\big) \, \mathrm dW_i(s) \\
&  \!\!  +\! \ds\sum\limits_{j=1}^l\! \int_E\!\! \big(M_j(s,e) x^0(s-)\!+\!\bar{M}_j(s, e)\mathbb{E}{[x^0(s-)]}\!+\!N_j(s, e) u(s)
\!+\!\bar{N}_j(s, e)\mathbb{E}{[u(s)]}\big)  \widetilde{\mu}_j\left(\mathrm ds, \mathrm de\right),\!\! \!\! \!\! \!\!   \\
&\!\! x^0(t)=0 .
\end{aligned}
\right.
\end{equation}
By  applying It\^o's formula to $\left\langle e^{K \cdot} x^0(\cdot), e^{K \cdot}
y^{*}(\cdot)\right\rangle$   and using $\lim\limits _{T \rightarrow \i} \mathbb{E}\left\langle \mathrm e^{K T} x^0(T), \mathrm e^{K T} y^{*}(T)\right\rangle=0$, we get (omitting the  variables $s$  and $e$ when without causing confusions)
%
%
$$
\begin{aligned}
&\mathbb{E}\!\int_{t}^{\infty}\! e^{2 K s}\big\langle x^0, Q x^{*}+\bar{Q}\mathbb{E}{[x^{*}]}
+S^{\top} u^{*} + \bar{S}^{\top}\mathbb{E}{[u^{*}]}\big\rangle\mathrm ds\\
&\!\!=\mathbb{E}\! \int_{t}^{T} \!\!\!e^{2 K s}\big\langle B^{\top} y^{*}\!+\!\bar{B}^{\top}\mathbb{E}{[y^{*}]}
\!+\! \mathbf{D}^{\top} z^{*}\!+\!\bar{\mathbf{D}}^{\top}\mathbb{E}{[z^{*}]} \! +\! \int_E \!\! \big(\mathbf{N}(e)^{\top} \varrho(\mathrm de) k^{*}(e)
\!+\!\bar{\mathbf{N}}(e)^{\top} \varrho(\mathrm de)\mathbb{E}{[k^{*}(e)]}\big) ,  u^{*}\big\rangle \,\mathrm ds .
\end{aligned}
$$

On the other hand,
$$
\begin{array}{lll}
&J^{K}\left(t, x_{t} ; u^{*}(\cdot)+\epsilon  u(\cdot)\right)-J^{K}\left(t, x_{t} ; u^{*}(\cdot)\right) \\
&=\epsilon  \mathbb{E}\ds\int_{t}^{\infty} \!e^{2 K s}\Big(\big\langle Q x^{*}+\bar{Q}\mathbb{E}{[x^{*}]}
+S^{\top} u^{*}+\bar{S}^{\top}\mathbb{E}{[u^{*}]}, x^0\big\rangle +\big\langle S x^{*}
+\bar{S}\mathbb{E}{[x^{*}]}+R u^{*}+\bar{R}\mathbb{E}{[u^{*}]}, u\big\rangle\Big) \,\mathrm ds \\
&\  +\dfrac{\epsilon ^{2}}{2} \mathbb{E}\ds\!\int_{t}^{\infty} \!e^{2 K s}g\big(s,x^0(s),\mathbb{E}[x^0(s)],u(s),\mathbb{E}[u(s)]\big)  \, \mathrm ds.
\end{array}
$$
Combining  the above two equalities,
$$
\begin{array}{lll}
&J^{K}\left(t, x_{t} ; u^{*}(\cdot)+\epsilon u(\cdot)\right)-J^{K}\left(t, x_{t} ; u^{*}(\cdot)\right) \\
&\!=\epsilon \mathbb{E}\!\!\!\ds\int_{t}^{\infty} e^{2 K s}\Big(\big\langle B^{\top} y^{*}
\!+\!\bar{B}^{\top}\mathbb{E}{[y^{*}]}\!+\! \mathbf{D}^{\top} z^{*}\!+\!\bar{\mathbf{D}}^{\top}\mathbb{E}{[z^{*}]}+\int_E  \big(\mathbf{N}(e)^{\top} \varrho(\mathrm de) k^{*}(e)
\!+\!\bar{\mathbf{N}}(e)^{\top} \varrho(\mathrm de)\mathbb{E}{[k^{*}(e)]}\big) \\
& \displaystyle\hskip2.7cm +S x^{*}+\bar{S}\mathbb{E}{[x^{*}]}+R u^{*}+\bar{R}\mathbb{E}{[u^{*}]} , u\big\rangle\Big)\, \mathrm ds \\
&\ +\dfrac{\epsilon ^{2}}{2} \mathbb{E}\ds\int_{t}^{\infty} e^{2 K s}g\big(s,x^0(s),\mathbb{E}[x^0(s)],u(s),\mathbb{E}[u(s)]\big) \,\mathrm ds.
\end{array}
$$
It is natural that, under Condition {\bf (PD)},  $u^*(\cdot)$ is an optimal control of Problem (MF-LQ) at $(t, x_t)$ if and
only if \eqref{optimal} holds.

\end{proof}
\begin{remark}\label{Re-MFLQ-1}\sl  The infinite horizon  MF-BSDE with jumps \eqref{adjoint} is called the adjoint equation of Problem (MF-LQ), whose solution processes $\left(y^{*}(\cdot), z^{*}(\cdot), k^{*}(\cdot, \cd)\right)$ are called the adjoint processes.
  In fact, when  $\left(u^{*}(\cdot), x^{*}(\cdot)\right) \in \mathcal{U} ^{K}[t, \infty) \times L_{\mathbb{F}}^{2, K}\left(t, \infty ; \mathbb{R}^{n}\right)$ and $K<\kappa,$  Lemma \ref{Le-BSDE-2} implies  \eqref{adjoint} to admit a unique solution
$ (y^{*}(\cdot), z^{*}(\cdot),$ $ k^{*}(\cdot, \cdot) )\in  L_{\mathbb{F}}^{2, K} (t, \infty ; \mathbb{R}^{n}) \times L_{\mathbb{F}}^{2, K}(t, \infty ; \mathbb{R}^{n d} ) \times \cK_{\mathbb{F}}^{2, K}(t, \infty ; \mathbb{R}^{n l} )$.
\end{remark}

Let's go back to the optimality condition \eqref{optimal}. Note that, Condition {\bf (PD)} implies that $R^1(\cdot), {R}^2(\cdot)$ are invertible for almost all $s \in[0, \infty)$.
Therefore, we can express the optimal control  $u^{*}(\cdot)$  by virtue of the optimal state $x^*(\cdot)$ and the adjoint processes $\left(y^{*}(\cdot), z^{*}(\cdot), k^{*}(\cdot, \cd)\right)$ as follows, (ignoring the superscript $^*$ from here to save the spaces)
\begin{equation}\label{optimal-u}
\begin{aligned}
&u^{*}(\cdot) =-R^1(\cdot)^{-1}\big( \mathbf{\Lambda}^1[y^{(1)},z^{(1)},k^{(1)}](\cdot)   + S^1(\cdot)  x^{(1)}(\cdot)\big)\\
&\qquad\quad -R^2(\cdot)^{-1}(\mathbf{\Lambda}^2[y^{(2)},z^{(2)},k^{(2)}](\cdot) + S^2(\cdot)  x^{(2)}(\cdot)\big),
\end{aligned}
\end{equation}
where $$\displaystyle   \mathbf{\Lambda}^ \iota  [y^{(\iota)} ,z^{(\iota)} ,k^{(\iota)} ] (\cdot):= B^\iota(\cdot)^{\top}   y^{(\iota)}(\cdot) +   \mathbf{D}^\iota(\cdot)^{\top}   z^{(\iota)}(\cdot)
 +\int_E  \mathbf{N}^\iota(\cdot,e) ^{\top}  \varrho (\mathrm de)  k^{(\iota)}(\cdot,e) ,\quad \iota=1,2.$$
Substituting the above $u^{*}(\cdot)$ into the state equation \eqref{state} and the adjoint equation \eqref{adjoint}, we get 
 %
\begin{equation}\label{Hamil}
\left\{
\begin{aligned}
&\!\! \mathrm dx (s)\!=\! \sum_{ \iota=1}^2\Big((A^\iota  -B^\iota   (R^\iota )^{-1} S^\iota) x^{(\iota)}    -B^\iota   (R^\iota )^{-1}\mathbf{\Lambda}^\iota [y^{(\iota)},z^{(\iota)},k^{(\iota)}] \Big) \, \mathrm ds \\
&    +\sum\limits_{i=1}^d\sum_{ \iota=1}^2\Big((C_i^\iota-D_i^\iota(R^\iota )^{-1} S^\iota) x^{(\iota)} \!-\!D_i^\iota(R^\iota )^{-1}\mathbf{\Lambda}^\iota[y^{(\iota)},z^{(\iota)},k^{(\iota)}]\Big)   \, \mathrm dW_i(s) \\
&  +\sum\limits_{j=1}^l\ds\int_E\sum_{ \iota=1}^2 \Big((M_j^\iota(e) -N_j^\iota(e)  (R^\iota )^{-1} S^\iota) x^{(\iota)}  -N_j^\iota(e) (R^\iota )^{-1}\mathbf{\Lambda}^\iota[y^{(\iota)},z^{(\iota)},k^{(\iota)}]\Big)  \,\widetilde{\mu}_j(\mathrm ds, \mathrm de),  \\
&\!\!  \mathrm dy (s)\! =\!-\sum_{\iota=1}^2\Big( (Q^\iota -(S^\iota)^{\top} (R^\iota )^{-1} S^\iota)x^{(\iota)} +(2 K I+A^\iota -B^\iota  (R^\iota )^{-1} S^\iota )^{\top}y^{(\iota)}
 \\
&   +(\mathbf{C}^\iota -\mathbf{D}^\iota  (R^\iota )^{-1} S^\iota )^{\top} z^{(\iota)}+ \int_E  (\mathbf{M}^{\iota}(e) -\mathbf{N}^\iota(e)  (R^\iota)^{-1} S^\iota )^{\top} \varrho(\mathrm de)  k^{(\iota)} (e) \Big) \, \mathrm d s \\
& + \sum\limits_{i=1}^d  z_i(s) \, \mathrm dW_i(s)\!+\! \sum\limits_{j=1}^l \!\int_E k_j(s, e)   \,\widetilde{\mu}_j(\mathrm ds, \mathrm de),    \quad s \geq t,    \\
&\!\!x (t)\!=\!x_{t},
\end{aligned}\right.
\end{equation}
which is  called as the Hamiltonian system of Problem (MF-LQ) at $(t, x_{t})$.
It is easy to check that \eqref{Hamil} can be rewritten into the following two FBSDEs, which will have a more intuitive
  looking,
\begin{equation}\label{Hamil-x-1}
\left\{
\begin{aligned}
&\!\! \mathrm dx ^{(1)}(s)\!=\!  \Big((A^{1}  -B^1   (R^1 )^{-1} S^1) x^{(1)}   -B^1   (R^1 )^{-1}\mathbf{\Lambda}^1[y^{(1)},z^{(1)},k^{(1)}]\Big) \, \mathrm ds \\
&    +\sum\limits_{i=1}^d\sum_{ \iota=1}^2\Big((C_i^\iota-D_i^\iota(R^\iota )^{-1} S^\iota) x^{(\iota)} \!-\!D_i^\iota(R^\iota )^{-1}\mathbf{\Lambda}^\iota[y^{(\iota)},z^{(\iota)},k^{(\iota)}]\Big)   \, \mathrm dW_i(s)\\
&   +\sum\limits_{j=1}^l\ds\int_E\sum_{\iota=1}^2 \Big((M_j^\iota(e) -N_j^\iota(e)  (R^\iota )^{-1} S^\iota) x^{(\iota)}  -N_j^\iota(e) (R^\iota )^{-1}\mathbf{\Lambda}^\iota[y^{(\iota)},z^{(\iota)},k^{(\iota)}]\Big)\,  \widetilde{\mu}_j(\mathrm ds, \mathrm de), \\
&\!\! \mathrm dy^{(1)} (s)\! =\!- \Big( (Q^1 -(S^1)^{\top} (R^1 )^{-1} S^1)x^{(1)} +(2 K I+A^{1}  -B^1 (R^1 )^{-1} S^1)^{\top}y^{(1)}
  \\
&
+(\mathbf{C}^{1} -\mathbf{D}^1  (R^1 )^{-1} S^1 )^{\top} z^{(1)} + \int_E  (\mathbf{M}^{1}(e) -\mathbf{N}^1 (e) (R^1 )^{-1} S^1 )^{\top} \varrho(\mathrm de)  k^{(1)} (e) \Big) \, \mathrm d s\\
& + \sum\limits_{i=1}^d  z_i(s) \, \mathrm dW_i(s)\!+\! \sum\limits_{j=1}^l \!\int_E k_j(s, e)   \,\widetilde{\mu}_j(\mathrm ds, \mathrm de), \quad s\geq t,     \\
&\!\!x^{(1)} (t)\!=\!x_{t}-\mathbb{E}[x_t],
\end{aligned}\right.
\end{equation}
and
\begin{equation}\label{Hamil-x-2}
\left\{
\begin{aligned}
&\!\! \mathrm dx ^{(2)}(s)\!=\!  \Big((A^{2}  -B^2   (R^2 )^{-1} S^2) x^{(2)}    -B^2   (R^2 )^{-1}\mathbf{\Lambda}^2[y^{(2)},z^{(2)},k^{(2)}]\Big)   \mathrm ds, \quad s\geq t,\\
&\!\!\mathrm dy^{(2)} (s)\! =\!- \Big( (Q^2 -(S^2)^{\top} (R^2 )^{-1} S^2)x^{(2)} +(2 K I+A^{2}  -B^2 (R^2)^{-1} S^2)^{\top}y^{(2)} \\
&
+ (\mathbf{C}^{2} -\mathbf{D}^2  (R^2)^{-1} S^2 )^{\top} z^{(2)} + \int_E  (\mathbf{M}^{2}(e) -\mathbf{N}^2(e)  (R^2 )^{-1} S^2  )^{\top} \varrho(\mathrm de)  k^{(2)}(e)  \Big) \mathrm d s,   \quad s\geq t,      \\
&\!\! x^{(2)} (t)\!=\!\mathbb{E}[x_{t}].
\end{aligned}\right.
\end{equation}
%
%
Note that,  these two FBSDEs are coupled.  In fact,  \eqref{Hamil-x-2} is a deterministic system,  from which $(z^{(2)},k^{(2)}(\cdot))$   can not be worked out by itself.  Moreover, the solution  $(x^{(2)},y^{(2)})$  of \eqref{Hamil-x-2} appears in the  diffusion terms  of \eqref{Hamil-x-1}.
Therefore,  \eqref{Hamil} is  an infinite horizon mean-field fully coupled FBSDEs with jumps.

Now, based on  the Hamiltonian system  \eqref{Hamil}, we can give another characterization of the open-loop solvability of Problem (MF-LQ).

\begin{lemma}\label{Le-MFLQ-1}\sl
Assume  {\bf{(H$_{1}$)}}, {\bf{(H$_{2}$)}} and Condition {\bf (PD)} hold. Then the Hamiltonian system \eqref{Hamil} (or, \eqref{Hamil-x-1} and \eqref{Hamil-x-2}) has a unique  solution $(x(\cdot),y(\cdot),z(\cdot),k(\cdot,\cdot) )\in \mathcal{L}_{\mathbb{F}}^{2, K}(t, \infty)$ with $K<\kappa $, if and only if
Problem (MF-LQ) is uniquely open-loop solvability  at $(t,x_t)\in [0,\infty)\times L_{\mathcal{F}_t}^2(\Omega;\mathbb{R}^n)$ with the open-loop optimal control $u^*(\cdot)$ being in \eqref{optimal-u}.
\end{lemma}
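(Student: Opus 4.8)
The plan is to establish a bijection between the open-loop optimal controls of Problem (MF-LQ) at $(t,x_t)$ and the solutions $(x(\cdot),y(\cdot),z(\cdot),k(\cdot,\cdot))\in\mathcal{L}_{\mathbb{F}}^{2,K}(t,\infty)$ of the Hamiltonian system \eqref{Hamil}, the correspondence being realized through \eqref{optimal-u}. Once this is done, uniqueness of a solution of \eqref{Hamil} is the same as unique open-loop solvability of Problem (MF-LQ), and the optimal control is then forced to be the one given by \eqref{optimal-u}; this is exactly the claimed equivalence.

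First I would record two preliminary facts. Condition \textbf{(PD)} gives $R^1(\cdot)\gg0$, $R^2(\cdot)\gg0$, hence $R^1(\cdot)^{-1},R^2(\cdot)^{-1}\in L^{\infty}(0,\infty;\mathbb{S}^m)$; combined with \textbf{(H$_{1}$)}, \textbf{(H$_{2}$)} and the identity $\mathbb{E}|\zeta|^2=\mathbb{E}|\zeta^{(1)}|^2+|\zeta^{(2)}|^2$, this shows that for any $(x(\cdot),y(\cdot),z(\cdot),k(\cdot,\cdot))\in\mathcal{L}_{\mathbb{F}}^{2,K}(t,\infty)$ the process $u^{*}(\cdot)$ defined by \eqref{optimal-u} lies in $\mathcal{U}^{K}[t,\infty)$. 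Second, under Condition \textbf{(PD)} the optimality condition \eqref{optimal} is equivalent to \eqref{optimal-u}: taking the expectation of \eqref{optimal} and inverting $R^2(\cdot)$ yields the mean part of the control, subtracting that expectation from \eqref{optimal} and inverting $R^1(\cdot)$ yields the mean-zero part, and adding the two gives \eqref{optimal-u}; each step is reversible.

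Then I would treat the two implications. If $u^{*}(\cdot)$ is an open-loop optimal control with optimal state $x^{*}(\cdot)$, the maximum principle established above shows that the adjoint triple $(y^{*}(\cdot),z^{*}(\cdot),k^{*}(\cdot,\cdot))$ — unique in the stated space by Lemma \ref{Le-BSDE-2}, cf.\ Remark \ref{Re-MFLQ-1} — satisfies \eqref{optimal}, hence \eqref{optimal-u}; substituting \eqref{optimal-u} into \eqref{state} and \eqref{adjoint} exhibits $(x^{*}(\cdot),y^{*}(\cdot),z^{*}(\cdot),k^{*}(\cdot,\cdot))$ as a solution of \eqref{Hamil} in $\mathcal{L}_{\mathbb{F}}^{2,K}(t,\infty)$. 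Conversely, given a solution $(x(\cdot),y(\cdot),z(\cdot),k(\cdot,\cdot))\in\mathcal{L}_{\mathbb{F}}^{2,K}(t,\infty)$ of \eqref{Hamil}, I define $u^{*}(\cdot)$ by \eqref{optimal-u}; it is admissible by the first preliminary fact, the forward part of \eqref{Hamil} is \eqref{state} driven by $u^{*}(\cdot)$ so that $x(\cdot)=x(\cdot;t,x_t,u^{*}(\cdot))$ by Lemma \ref{Le-SDE-1}, the backward part of \eqref{Hamil} is exactly the adjoint equation \eqref{adjoint} associated with $(u^{*}(\cdot),x(\cdot))$, and \eqref{optimal} holds by the second preliminary fact, so the maximum principle makes $u^{*}(\cdot)$ optimal. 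The two assignments are mutually inverse (starting from $u^{*}(\cdot)$ one recovers the quadruple and then $u^{*}(\cdot)$ again via \eqref{optimal-u} since \eqref{optimal} holds, and conversely), which yields the desired bijection and therefore the lemma.

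The argument is essentially soft; the only points that require care are the admissibility statement — which relies on the coercivity in Condition \textbf{(PD)} to bound $R^1(\cdot)^{-1}$ and $R^2(\cdot)^{-1}$ — and the reversibility of the substitution that converts \eqref{state}--\eqref{adjoint} into \eqref{Hamil}, where the mean and mean-zero parts must be tracked separately because the feedback gains built from $R^1(\cdot)$ and $R^2(\cdot)$ differ, as reflected in the decomposed systems \eqref{Hamil-x-1} and \eqref{Hamil-x-2}. No new estimate beyond Lemmas \ref{Le-SDE-1}, \ref{Le-BSDE-2} and the maximum principle is needed.
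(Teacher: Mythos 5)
Your proposal is correct and follows essentially the same route the paper takes implicitly: under Condition \textbf{(PD)} the stationarity condition \eqref{optimal} is equivalent to the feedback representation \eqref{optimal-u} (by separating mean and mean-zero parts and inverting $R^1$, $R^2$), and substituting this into \eqref{state} and \eqref{adjoint} converts optimal pairs into solutions of \eqref{Hamil} and back, with uniqueness transferring through the bijection via the uniqueness of the state equation (Lemma \ref{Le-SDE-1}) and of the adjoint equation (Lemma \ref{Le-BSDE-2}). The paper leaves this argument implicit in the derivation preceding the lemma; your write-up supplies exactly the missing details and no more.
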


From above, we know  the wellposedness of  Hamiltonian system \eqref{Hamil} is  crucial to study the open-loop  solvability of Problem (MF-LQ). Therefore,  we convert to the   research of  the wellposedness of  Hamiltonian system \eqref{Hamil}, which will be carried out in two cases:
(i)  the cross terms $S(\cdot)$ and $\bar S(\cdot)$ are all being zero matrices; (ii) at least one  of  $S(\cdot)$ or $\bar S(\cdot)$ is nonzero matrix.

\subsection{The case  $S(\cdot ) $ and  $\bar S(\cdot ) $ all being zero matrices} 
\label{subsection-S-0}

When   $S (\cdot )$ and $ \bar S(\cdot ) $ are all being zero matrices, FBSDE \eqref{Hamil} is reduced to
%
\begin{equation}\label{FBSDE-S=0}
\left\{
\begin{aligned}
&\!\! \mathrm dx (s)\!=\! \sum_{ \iota=1}^2\Big(A^\iota  x^{(\iota)}    -B^\iota   (R^\iota )^{-1}\mathbf{\Lambda}^\iota [y^{(\iota)},z^{(\iota)},k^{(\iota)}] \Big) \, \mathrm ds \\
&    +\sum\limits_{i=1}^d\sum_{ \iota=1}^2\Big(C_i^\iota x^{(\iota)} \!-\!D_i^\iota(R^\iota )^{-1}\mathbf{\Lambda}^\iota[y^{(\iota)},z^{(\iota)},k^{(\iota)}]\Big)   \, \mathrm dW_i(s) \\
&  +\sum\limits_{j=1}^l\ds\int_E\sum_{ \iota=1}^2 \Big(M_j^\iota(e) x^{(\iota)}  -N_j^\iota(e) (R^\iota )^{-1}\mathbf{\Lambda}^\iota[y^{(\iota)},z^{(\iota)},k^{(\iota)}]\Big)\,  \widetilde{\mu}_j(\mathrm ds, \mathrm de), \ s \geq t, \\
&\!\!  \mathrm dy (s)\! =\!-\sum_{\iota=1}^2\Big( Q^\iota x^{(\iota)} +(2 K I+A^\iota )^{\top}y^{(\iota)}
  +(\mathbf{C}^\iota )^{\top} z^{(\iota)}+ \int_E  \mathbf{M}^{\iota}(e)^{\top} \varrho(\mathrm de)  k^{(\iota)} (e) \Big) \, \mathrm d s \\
&+ \sum\limits_{i=1}^d  z_i(s) \, \mathrm dW_i(s)\!+\! \sum\limits_{j=1}^l \!\int_E k_j(s, e)   \widetilde{\mu}_j(\mathrm ds, \mathrm de),    \quad s \geq t,    \\
&\!\!x (t)\!=\!x_{t}.
\end{aligned}\right.
\end{equation}
The method of continuity   (referring to \cite{HP-1995, Peng-Wu-1999, Wei-Yu-2021}, etc.) will be adopted here to study the wellposedness of \eqref{FBSDE-S=0}. For this, we introduce
 a family of  infinite horizon mean-field  FBSDEs with jumps parameterized by $\alpha\in[0,1]$ as follows,
\begin{equation}  \label{Hamil-alpha}
\left\{
\begin{aligned}
&\!\! \mathrm dx_{\alpha}(s) = \Big[\sum_{\iota=1}^2\big(\alpha A^\iota  x^{(\iota)}_{\alpha}
-B^\iota  (R^\iota) ^{-1}\mathbf{\Lambda}^\iota[y^{(\iota)}_\alpha,z^{(\iota)}_\alpha,k^{(\iota)}_\alpha]\big)
-(1-\alpha) \kappa_1 x_{\alpha}  +\varphi \Big] \, \mathrm ds \\
&\!  +\sum\limits_{i=1}^d\[ \sum_{\iota=1}^2 \big( \alpha C_i^\iota  x^{(\iota)}_{\alpha}
-D_i^\iota  (R^\iota) ^{-1}\mathbf{\Lambda}^\iota[y^{(\iota)}_\alpha,z^{(\iota)}_\alpha,k^{(\iota)}_\alpha]\big) +\psi_i \]\, \mathrm dW_i(s)  \\
& \!  +\sum\limits_{j=1}^l\ds\int_E\[ \sum_{\iota=1}^2 \big(\alpha M_j^{\iota} (e) x^{(\iota)}_{\alpha}
-N_j^\iota(e)   (R^\iota) ^{-1}\mathbf{\Lambda}^\iota[y^{(\iota)}_\alpha,z^{(\iota)}_\alpha,k^{(\iota)}_\alpha]\big) + \chi_j(e) \] \, \widetilde{\mu}_j(\mathrm ds, \mathrm de) , \  s\geq t,\\
&\!\! \mathrm dy_{\alpha}(s)   =-\[\sum_{\iota=1}^2\alpha \(Q^\iota x^{(\iota)}_{\alpha}+(2 K I+ A^\iota) ^{\top} y^{(\iota)}_{\alpha}
+(\mathbf{C}^\iota) ^{\top} z^{(\iota)}_{\alpha}
+\ds\int_E  \mathbf{M}^{\iota}(e)  ^{\top}\varrho(\mathrm de)k^{(\iota)}_{\alpha}(e)  \)\\
&\!
-(1-\alpha) \kappa_2 y_{\alpha}  +\phi \] \, \mathrm ds
+ \sum\limits_{i=1}^dz_{\alpha,i}(s)  \, \mathrm dW_i (s)  + \ds\sum\limits_{j=1}^l\int_E k_{\alpha,j}(s,e) \, \widetilde{\mu} _j(\mathrm ds, \mathrm de),  \quad  s \geq t, \\
&\!\! x_{\alpha}(t)=\xi,
\end{aligned}\right.
\end{equation}
where  $\xi \in L_{\mathcal{F}_{t}}^{2}\left(\Omega ; \mathbb{R}^{n}\right)$ and
$\zeta(\cdot):=(\phi(\cdot)^{\top}, \f(\cdot)^{\top}, \psi(\cdot)^{\top},\chi(\cdot,\cdot)^\top)^{\top}\in \cL_{\mathbb{F}}^{2, K}(t, \infty)$.

For any $\alpha\in[0,1]$, we firstly study the a priori estimate for FBSDEs \eqref{Hamil-alpha}.

\begin{lemma}\label{Le-FBSDE-1}\sl
Assume   {\bf{(H$_{1}$)}}, {\bf{(H$_{2}$)}}, Condition {\bf (PD)} hold. Let
  $K<\kappa$ and  $\theta_\alpha(\cdot), \bar{\th}_\alpha(\cdot)\in \cL_{\mathbb{F}}^{2, K}(t, \infty )$ are the solutions to FBSDEs \eqref{Hamil-alpha}, $\alpha\in[0,1]$ with $(\xi, \zeta (\cdot))$, $(\bar{\xi}, \bar{\zeta}(\cdot))\in L_{\cF_{t}}^{2}(\Omega, \mathbb{R}^{n}) \times \cL_{\mathbb{F}}^{2, K}(t, \infty)$, respectively.
Then, there exists some constant ${\mathbf{k}}>0$, such that for any $\displaystyle K\in \[\frac{\kappa_1-\kappa_2}{2}, \frac{\kappa_1-\kappa_2}{2}+{\mathbf{k}}\)$ and $\alpha \in[0,1]$,
\begin{equation}\label{equ-FBSDE-8}
\begin{aligned}
&\mathbb{E}\[\big|(y_\alpha(t)-\bar{y}_\alpha(t)) \mathrm e^{K t}\big|^{2}
+\ds\int_{t}^{\infty} \big|(\theta_\alpha(s)-\bar{\theta}_\alpha(s))\mathrm e^{K s}\big|^{2}  \mathrm ds\] \\
&\leq C\mathbb{E}\[\big|(\xi-\bar{\xi})\mathrm e^{K t}\big|^{2}
+\ds\int_{t}^{\infty} \big|(\zeta(s)-\bar{\zeta}(s))\mathrm e^{K s}\big|^{2}   \mathrm ds\],
\end{aligned}
\end{equation}
where the  constant $C>0$ depends on $\kappa$, $L_{\varepsilon,j}$, $j=1,2,3$
and the norms of the coefficients of \eqref{FBSDE-S=0}.
%
\end{lemma}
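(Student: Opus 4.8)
The plan is to combine a duality identity in the spirit of the method of continuation with the mean/fluctuation separation already used in Lemmas~\ref{Le-SDE-1} and~\ref{Le-BSDE-1}. By linearity of \eqref{Hamil-alpha} it suffices to estimate the difference $\widehat\theta_\alpha(\cdot):=\theta_\alpha(\cdot)-\bar\theta_\alpha(\cdot)=(\widehat x_\alpha,\widehat y_\alpha,\widehat z_\alpha,\widehat k_\alpha)(\cdot)$, which solves \eqref{Hamil-alpha} with data $(\widehat\xi,\widehat\zeta):=(\xi-\bar\xi,\zeta-\bar\zeta)$; put $\widehat\Lambda^{(\iota)}_\alpha:=\mathbf{\Lambda}^\iota[\widehat y^{(\iota)}_\alpha,\widehat z^{(\iota)}_\alpha,\widehat k^{(\iota)}_\alpha]$ and $\widehat u^{(\iota)}_\alpha:=-(R^\iota)^{-1}\widehat\Lambda^{(\iota)}_\alpha$, $\iota=1,2$. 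First I would apply It\^o's formula to $\langle\widehat x_\alpha(s)\mathrm e^{Ks},\widehat y_\alpha(s)\mathrm e^{Ks}\rangle$ on $[t,T]$, take expectation, and let $T\to\infty$; the boundary term at $T$ vanishes because, by Remarks~\ref{Re-SDE-1} and~\ref{Re-BSDE-1} applied to the forward and backward parts of \eqref{Hamil-alpha}, $\mathbb{E}|\widehat x_\alpha(T)\mathrm e^{KT}|^2\to0$ and $\mathbb{E}|\widehat y_\alpha(T)\mathrm e^{KT}|^2\to0$, so Cauchy--Schwarz closes it. In the resulting identity the ``transport'' contributions of $A^\iota,\mathbf C^\iota,\mathbf M^\iota$ cancel between the two generators (using $\mathbb{E}[\langle\zeta^{(1)},\eta^{(2)}\rangle]=0$), the $B^\iota,\mathbf D^\iota,\mathbf N^\iota$ terms collapse into $\langle R^\iota\widehat u^{(\iota)}_\alpha,\widehat u^{(\iota)}_\alpha\rangle$, the $Q^\iota$ terms produce $\langle Q^\iota\widehat x^{(\iota)}_\alpha,\widehat x^{(\iota)}_\alpha\rangle\ge0$, while the terms coming from the weight $\mathrm e^{2Ks}$, the shift $2KI$ and the dampings $-(1-\alpha)\kappa_1,-(1-\alpha)\kappa_2$ combine into a single cross term with coefficient $(1-\alpha)(2K-\kappa_1+\kappa_2)$; since $K\in[\frac{\kappa_1-\kappa_2}{2},\frac{\kappa_1-\kappa_2}{2}+\mathbf{k})$ this coefficient lies in $[0,2\mathbf{k})$, which is exactly why that range of $K$ is imposed. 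Invoking Condition~{\bf(PD)} ($R^\iota\gg0$, $Q^\iota\ge0$) one then obtains, for every $\lambda>0$,
$$
\begin{aligned}
c_0\,\mathbb{E}\!\int_t^\infty\!\sum_{\iota=1}^2\big|\widehat\Lambda^{(\iota)}_\alpha(s)\mathrm e^{Ks}\big|^2\mathrm ds
&\le \frac{1}{2\lambda}\,\mathbb{E}\big|\widehat\xi\,\mathrm e^{Kt}\big|^2+\frac{\lambda}{2}\,\mathbb{E}\big|\widehat y_\alpha(t)\mathrm e^{Kt}\big|^2\\
&\quad+\Big(\mathbf{k}+\frac{\lambda}{2}\Big)\big\|\widehat\theta_\alpha(\cdot)\big\|_{\cL_{\mathbb{F}}^{2,K}}^2+C_\lambda\big\|\widehat\zeta(\cdot)\big\|_{\cL_{\mathbb{F}}^{2,K}}^2,
\end{aligned}
$$
where $c_0>0$ comes from the uniform positive definiteness of $R^1,R^2$.

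Next I would close the loop with the a priori estimates already in hand. The forward part of \eqref{Hamil-alpha} is an MF-SDE of the type \eqref{equ-SDE-1} with nonhomogeneous terms $-\sum_\iota B^\iota(R^\iota)^{-1}\widehat\Lambda^{(\iota)}_\alpha+\widehat\varphi$ (and analogously in the diffusion and jump coefficients), and its associated damping parameter is still $\ge\kappa>K$: replacing $A$ by $\alpha A-(1-\alpha)\kappa_1 I$ only shifts the relevant eigenvalues favorably and $\alpha^2\le\alpha$ keeps the quadratic $\mathbf C^1,\mathbf M^1$ contributions under control. Hence Lemma~\ref{Le-SDE-1} bounds $\mathbb{E}\int_t^\infty|\widehat x_\alpha\mathrm e^{Ks}|^2\mathrm ds$ by $C(\mathbb{E}|\widehat\xi\mathrm e^{Kt}|^2+\mathbb{E}\int_t^\infty\sum_\iota|\widehat\Lambda^{(\iota)}_\alpha\mathrm e^{Ks}|^2\mathrm ds+\|\widehat\zeta(\cdot)\|_{\cL_{\mathbb{F}}^{2,K}}^2)$. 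Likewise the backward part of \eqref{Hamil-alpha} is an MF-BSDE of the type \eqref{equ-BSDE-1} with nonhomogeneous term $\sum_\iota\alpha Q^\iota\widehat x^{(\iota)}_\alpha+\widehat\phi$, and here the left endpoint $K\ge\frac{\kappa_1-\kappa_2}{2}$ — equivalently $2K+\kappa_2\ge\kappa_1$ — is precisely what forces the $\kappa_1$-type parameter of the parameterized backward system to stay above $K$ for all $\alpha$; so Lemma~\ref{Le-BSDE-1} bounds $\mathbb{E}|\widehat y_\alpha(t)\mathrm e^{Kt}|^2+\mathbb{E}\int_t^\infty(|\widehat y_\alpha|^2+|\widehat z_\alpha|^2+\|\widehat k_\alpha\|_\rho^2)\mathrm e^{2Ks}\mathrm ds$ by $C(\mathbb{E}\int_t^\infty|\widehat x_\alpha\mathrm e^{Ks}|^2\mathrm ds+\|\widehat\zeta(\cdot)\|_{\cL_{\mathbb{F}}^{2,K}}^2)$. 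Combining the two estimates together with $|\widehat\Lambda^{(\iota)}_\alpha|\le C(|\widehat y^{(\iota)}_\alpha|+|\widehat z^{(\iota)}_\alpha|+\|\widehat k^{(\iota)}_\alpha\|_\rho)$ yields
$$
\begin{aligned}
\big\|\widehat\theta_\alpha(\cdot)\big\|_{\cL_{\mathbb{F}}^{2,K}}^2+\mathbb{E}\big|\widehat y_\alpha(t)\mathrm e^{Kt}\big|^2
&\le C_*\,\mathbb{E}\big|\widehat\xi\,\mathrm e^{Kt}\big|^2+C_*\,\mathbb{E}\!\int_t^\infty\!\sum_{\iota=1}^2\big|\widehat\Lambda^{(\iota)}_\alpha(s)\mathrm e^{Ks}\big|^2\mathrm ds\\
&\quad+C_*\big\|\widehat\zeta(\cdot)\big\|_{\cL_{\mathbb{F}}^{2,K}}^2,
\end{aligned}
$$
with $C_*$ independent of $\alpha\in[0,1]$ and of $\lambda,\mathbf{k}$. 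Feeding this into the first display and then choosing $\lambda$, and afterwards $\mathbf{k}$, small enough that $C_*(\lambda+\mathbf{k})<c_0/2$, the $\widehat\Lambda_\alpha$-term is absorbed into the left side; this gives $\mathbb{E}\int_t^\infty\sum_\iota|\widehat\Lambda^{(\iota)}_\alpha\mathrm e^{Ks}|^2\mathrm ds\le C(\mathbb{E}|\widehat\xi\mathrm e^{Kt}|^2+\|\widehat\zeta(\cdot)\|_{\cL_{\mathbb{F}}^{2,K}}^2)$, and one more substitution delivers \eqref{equ-FBSDE-8}. The constant $\mathbf{k}$ so obtained depends only on $\kappa$, on the constants $L_{\varepsilon,j}$ of Lemmas~\ref{Le-SDE-1}--\ref{Le-BSDE-1}, and on the sup-norms of the coefficients of \eqref{FBSDE-S=0}.

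The hard part will be the uniform-in-$\alpha$ bookkeeping in this last step: one must verify that the damping parameters of both the forward and backward parameterized subsystems genuinely remain $\ge\kappa>K$ for every $\alpha\in[0,1]$ (this is where $2K+\kappa_2\ge\kappa_1$ enters), and — more importantly — that the constant $C_*$ produced by Lemmas~\ref{Le-SDE-1} and~\ref{Le-BSDE-1} does not itself depend on $\lambda$ or $\mathbf{k}$, so that the absorption is not circular. Once these points are settled, everything else reduces to routine applications of the Cauchy and H\"older inequalities.
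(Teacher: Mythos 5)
Your proposal is correct and follows essentially the same route as the paper: the mean/fluctuation decomposition feeding Lemma \ref{Le-SDE-1} for the forward component and Lemma \ref{Le-BSDE-1} for the backward component, the duality identity from It\^o's formula applied to $\langle\widehat{x}_\alpha,\widehat{y}_\alpha\rangle$ combined with Condition {\bf (PD)} to control $\sum_\iota|\mathbf{\Lambda}^\iota[\cdot]|^2$, and the final absorption made possible by restricting $K$ to $[\tfrac{\kappa_1-\kappa_2}{2},\tfrac{\kappa_1-\kappa_2}{2}+\mathbf{k})$ so that the cross-term coefficient $(1-\alpha)[2K-(\kappa_1-\kappa_2)]$ is nonnegative and small. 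The only difference is the order of assembly (you derive the duality bound first and the forward/backward estimates second, the paper does the reverse), which is immaterial.
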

\begin{proof}
For convenience, we denote
$
\widehat{\xi}=(\xi-\bar{\xi}) \mathrm e^{K t}$  and $  \widehat{\vartheta}(\cdot)=(\vartheta(\cdot)-\bar{\vartheta}(\cdot)) e^{K \cdot},
$
where $\vartheta(\cdot)  =x_\alpha(\cdot),$ $y_\alpha(\cdot),$ $z_\alpha(\cdot),$ $k_\alpha(\cdot, \cdot),$ $\zeta(\cdot).$

We denote $\kappa_1'$, $\kappa_2'$ and $\kappa'$ by the ones defined  in  \eqref{kappa} corresponding to  the forward SDE in \eqref{Hamil-alpha}, $\alpha\in[0,1]$.
By the  direct  calculations, we have $\kappa'_1=\kappa_1,$ $\kappa'_2 \geq \kappa,$ $\kappa'\geq \kappa.$ Then,   applying  \eqref{equ-SDE-4} in Lemma \ref{Le-SDE-1} to $x_\alpha(\cdot)$ and $\bar{x}_\alpha(\cdot)$, we get
\begin{equation}\label{equ-FBSDE-9}
 \begin{aligned}
&(- 2 K+ 2 \kappa  - 3\varepsilon) \mathbb{E}\int_t^\infty |\widehat{x}_\alpha|^{2}  \, \mathrm  ds
\leq(- 2 K+ 2 \kappa'  - 3\varepsilon) \mathbb{E}\int_t^\infty |\widehat{x}_\alpha|^{2}  \, \mathrm  ds \\
&\leq L_{\varepsilon,1} \mathbb{E}| \widehat{\xi} |^{2}+ \mathbb{E}\int_t^\infty \(\frac {L_{\varepsilon,1}}{\varepsilon}
\Big|\widehat{\varphi} -\sum_{\iota=1}^2 B^\iota (R^\iota)^{-1} \mathbf{\Lambda}^\iota[\widehat y^{(\iota)}_\alpha,\widehat z^{(\iota)}_\alpha,\widehat k^{(\iota)}_\alpha]
 \Big|^{2} \\
&\quad
+\(2+\frac{\|\mathbf{C}^{1}(s)\|^{2}}{\varepsilon}\) \Big|  \widehat{\psi} -\sum_{\iota=1}^2 \mathbf{D}^\iota (R^\iota)^{-1}\mathbf{\Lambda}^\iota[\widehat y^{(\iota)}_\alpha,\widehat z^{(\iota)}_\alpha,\widehat k^{(\iota)}_\alpha]
 \Big|^{2}\\
&\quad
+\(2+\frac{\|\mathbf{M}^{1}(s, \cdot)\|_\rho^{2}}{\varepsilon}\) \Big\| \widehat{\chi}(\cdot) - \sum_{\iota=1}^2 \mathbf{N}^\iota(\cdot) (R^\iota)^{-1} \mathbf{\Lambda}^\iota[\widehat y^{(\iota)}_\alpha,\widehat z^{(\iota)}_\alpha,\widehat k^{(\iota)}_\alpha] \Big\|_\rho^{2} \)\,\mathrm  ds \\
&\leq L_{\varepsilon,1} \mathbb{E}| \widehat{\xi} |^{2} + \mathbb{E}\int_t^\infty \( L_{\varepsilon,4}\sum_{\iota=1}^2 \big| \mathbf{\Lambda}^\iota[\widehat y^{(\iota)}_\alpha,\widehat z^{(\iota)}_\alpha,\widehat k^{(\iota)}_\alpha]\big|^{2} \\
&\quad
+\frac {3 L_{\varepsilon,1}}{\varepsilon}|\widehat{\varphi} |^{2} +3 \(2+\frac{\|\mathbf{C}^{1}(s)\|^{2}}{\varepsilon}\)|\widehat{\psi} |^{2} +3\(2+\frac{\|\mathbf{M}^{1}(s, \cdot)\|_\rho^{2}}{\varepsilon}\)\|\widehat{\chi}(s, \cdot)\|_\rho^{2}\)  \,\mathrm  ds,
\end{aligned}
\end{equation}
where $L_{\varepsilon,1}$ is the one in Lemma \ref{Le-SDE-1}, $L_{\varepsilon,4}$ is a positive constant depending on $\varepsilon$, $L_{\varepsilon,1}$, $\sup\limits_{s\in[0,\i)}\|\mathbf{C}^{1}(s)\|^{2}$, $\sup\limits_{s\in[0,\i)}\|\mathbf{M}^{1}(s, \cdot)\|_\rho^{2}$, $ \sup\limits_{s\in[0,\i)}\|B^\iota(s) R^\iota(s)^{-1}\|^{2}$, $\sup\limits_{s\in[0,\i)}\|\mathbf{D}^\iota(s) R^\iota(s)^{-1}\|^{2}$ and $\sup\limits_{s\in[0,\i)}\|\mathbf{N}^\iota(s, \cdot) R^\iota(s)^{-1}\|_\rho^{2}$, $\iota=1,2$.

Setting $\e_{1}:=\frac{-2K+2\kappa }{3}> 0,$  when $\varepsilon \in\left(0, \e_{1}\right)$, we get
\begin{equation}\label{equ-FBSDE-14}
\mathbb{E}\ds\int_{t}^{\infty}|\widehat{x}_\alpha|^{2}\mathrm ds
\leq   C_{1,\varepsilon} \mathbb{E}\[|\widehat{\xi}|^{2}+\ds\int_{t}^{\infty}\(\sum_{\iota=1}^2 \big|\mathbf{\Lambda}^\iota[\widehat y^{(\iota)}_\alpha,\widehat z^{(\iota)}_\alpha,\widehat k^{(\iota)}_\alpha]\big|^{2}  + |\widehat{\varphi}|^{2}+ |\widehat{\psi}|^{2}+ \|\widehat{\chi}(\cdot)\|_\rho^{2}\)\,\mathrm ds\],
\end{equation}
with
\begin{equation}\label{equ-FBSDE-13}
C_{1,\varepsilon} :=\frac{1}{-2K+ 2\kappa - 3\varepsilon}\max\Big\{\! L_{\varepsilon,1} ,L_{\varepsilon,4},\frac {3 L_{\varepsilon,1}}{\varepsilon},3 \(2+\frac{\sup\limits_{s\in[0,\i)}\|\mathbf{C}^{1}(s)\|^{2}\vee \sup\limits_{s\in[0,\i)}\|\mathbf{M}^{1}(s, \cdot)\|_\rho^{2}}{\varepsilon}\) \!  \Big\}.\!\!\!\!
\end{equation}

Similarly,  denoting $\kappa_1''$, $\kappa_2''$ and $\kappa''$ by the ones  in  \eqref{kappa} corresponding to  the BSDE in \eqref{Hamil-alpha}, $\alpha\in[0,1]$
 and by the  direct  calculations, we have $\kappa''_1\geq\kappa,$ $ \kappa''_2 \geq  \kappa_2,$ $\kappa''\geq \kappa.$
Then, by applying  \eqref{equ-BSDE-4-1}  in Lemma \ref{Le-BSDE-1} to $(y_\alpha(\cdot), z_\alpha(\cdot), k_\alpha(\cdot, \cd))$ and $(\bar{y}_\alpha(\cdot), \bar{z}_\alpha(\cdot),  \bar{k}_\alpha(\cdot, \cd))$ yields, we get
\begin{equation}\label{equ-FBSDE-15}
\begin{aligned}
&\mathbb{E} |\widehat{y}_\alpha(t)|^{2}+\mathbb{E} \int_{t}^{\infty}\( (2 \kappa-2 K-\varepsilon)|\widehat{y}_\alpha|^{2}
+|\widehat{z}_\alpha|^{2} +\|\widehat{k}_\alpha(\cdot)\|_\rho^{2}\) \mathrm ds  \\
&\leq \mathbb{E} |\widehat{y}_\alpha(t)|^{2}+\mathbb{E} \int_{t}^{\infty}\( (2 \kappa''-2 \alpha K-\varepsilon) |\widehat{y}_\alpha|^{2}
+|\widehat{z}_\alpha|^{2} +\|\widehat{k}_\alpha(\cdot)\|_\rho^{2}\) \mathrm ds  \\
&\leq\(\frac{1}{\varepsilon}+ L_{\varepsilon,2} +  L_{\varepsilon,3} \)
\mathbb{E}\int_{t}^{\infty} \Big|\sum_{\iota=1}^2 Q^\iota\widehat{x}_\alpha^{\iota}
+\widehat{\phi}\Big|^{2} \mathrm ds \\
&\leq 3\(\frac{1}{\varepsilon}+  L_{\varepsilon,2} +  L_{\varepsilon,3} \)
\max\Big\{ 1, \sup\limits_{s\in[0,\i)}\|Q^1(s)\|^{2}, \sup\limits_{s\in[0,\i)}\|Q^2(s)\|^{2}\Big\}
\mathbb{E}\int_{t}^{\infty} (|\widehat{x}_\alpha|^{2} +|\widehat{\phi}|^{2} )\,\mathrm ds ,
\end{aligned}
\end{equation}
where $L_{\varepsilon ,2}$ and $L_{\varepsilon,3}$ are the ones in Lemma \ref{Le-BSDE-1}.
Setting $\e_{2}:=-2K+2\kappa > 0,$ then   $2 \kappa_2 - 2 K >0$. So, when $\varepsilon \in(0, \e_{2})$,
\begin{equation}\label{equ-FBSDE-16}
 \mathbb{E}\[|\widehat{y}_\alpha(t)|^{2} + \int_{t}^{\infty}( |\widehat{y}_\alpha|^{2}
+|\widehat{z}_\alpha|^{2} +\|\widehat{k}_\alpha(\cdot)\|_\rho^{2}) \mathrm ds\]
\leq C_{2,\varepsilon} \mathbb{E} \int_{t}^{\infty}(|\widehat{x}_\alpha|^{2} + |\widehat{\phi}|^{2}) \mathrm ds,
\end{equation}
with
$$
  C_{2,\varepsilon} :=
\frac{3 (\frac{1}{\varepsilon}+ L_{\varepsilon,2} + L_{\varepsilon,3} )
\max\{ 1, \sup\limits_{s\in[0,\i)}\|Q^1(s)\|^{2}, \sup\limits_{s\in[0,\i)}\|Q^2(s)\|^{2} \}}{\min \{1, 2\kappa-2K-\varepsilon \}}.
$$

Further, for any $T >t$,  using It\^o's formula to $\langle\widehat{x}_\alpha (\cdot), \widehat{y}_\alpha (\cdot)\rangle$ on the interval $[t, T]$, we have
$$
\begin{aligned}
&\mathbb{E}\big[\langle\widehat{x}_\alpha (T), \widehat{y}_\alpha (T)\rangle-\langle\widehat{\xi}, \widehat{y}_\alpha (t)\rangle\big]
%
%
 \leq\mathbb{E }\ds\int_{t}^{T}\!\!\( \!\!-\sum_{\iota=1}^2\big\langle (R^i)^{-1}\mathbf{\Lambda}^\iota[\widehat y^{(\iota)}_\alpha,\widehat z^{(\iota)}_\alpha,\widehat k^{(\iota)}_\alpha],
\mathbf{\Lambda}^\iota[\widehat y^{(\iota)}_\alpha,\widehat z^{(\iota)}_\alpha,\widehat k^{(\iota)}_\alpha]\big\rangle\\
&+(1-\a) [2K-(\k_1-\k_2)] \langle\widehat{x}_\alpha , \widehat{y}_\alpha \rangle +\langle\widehat{\varphi}, \widehat{y}_\alpha \rangle - \langle\widehat{\phi}, \widehat{x}_\alpha \rangle
+\langle\widehat{\psi}, \widehat{z}_\alpha \rangle
+\langle\widehat{\chi}(\cdot), \widehat{k}_\alpha (\cdot)\rangle_\rho\)\mathrm ds .
\end{aligned}
$$
By Condition {\bf (PD)} and letting $T\to\i$,  we get
\begin{equation}\label{equ-FBSDE-17}
\begin{aligned}
&\mathbb{E} \ds\int_{t}^{\infty}\!\!\sum_{i=1}^2\big|\mathbf{\Lambda}^\iota[\widehat y^{(\iota)}_\alpha,\widehat z^{(\iota)}_\alpha,\widehat k^{(\iota)}_\alpha] \big|^{2} \mathrm ds \leq \frac{1}{L_{R,\widetilde {R}}}\mathbb{E}\[\langle\widehat{\xi}, \widehat{y}_\alpha(t)\rangle
+\!\!\ds\int_{t}^{\infty}\big((1-\a)[2K-(\k_1-\k_2)]\langle\widehat{x}_\alpha, \widehat{y}_\alpha \rangle\\
&\hskip6cm+\langle\widehat{\varphi}, \widehat{y}_{\alpha}\rangle - \langle\widehat{\phi}, \widehat{x}_{\alpha}\rangle+\langle\widehat{\psi}, \widehat{z}_{\alpha}\rangle
+\langle\widehat{\chi}(\cdot), \widehat{k}_{\alpha}(\cdot)\rangle_\rho
 \big)\,\mathrm ds \],
\end{aligned}
\end{equation}
where $ L_{R,\widetilde {R}}:= \frac{1}{\min\big\{ \mathop{\inf}\limits_{s \in[0, \infty)}\lambda_{\min }(R^1 (s)^{-1}), \mathop{\inf}\limits_{s \in[0, \infty)}\lambda_{\min }(R^2(s)^{-1})\big\}} $.

Restricting $0<\e<\min \left\{\e_{1}, \e_{2}\right\}$ and substituting \eqref{equ-FBSDE-14} into \eqref{equ-FBSDE-16} yields,
$$
\begin{aligned}
&\mathbb{E}\[|\widehat{y}_\alpha (t)|^{2} + \ds\int_{t}^{\infty}\big(|\widehat{y}_\alpha |^{2}+|\widehat{z}_\alpha |^{2}
+\|\widehat{k}_\alpha (\cdot)\|_\rho^{2}\big) \mathrm ds\]\leq  C_{2,\varepsilon} \mathbb{E} \ds\int_{t}^{\infty} |\widehat{\phi}|^{2} \mathrm ds \\
&\ds + C_{2,\varepsilon} C_{1,\varepsilon} \mathbb{E}\[ |\widehat{\xi}|^{2}
+\ds\int_{t}^{\infty}\(\sum_{\iota=1}^2 \big|\mathbf{\Lambda}^\iota[\widehat y^{(\iota)}_\alpha,\widehat z^{(\iota)}_\alpha,\widehat k^{(\iota)}_\alpha] \big|^{2}
+ |\widehat{\f}|^{2} + |\widehat{\psi}|^{2} + \|\widehat{\chi}(\cdot)\|_\rho^{2}\) \,\mathrm ds
\].
\end{aligned}
$$
Then, combing  \eqref{equ-FBSDE-14},
\begin{equation}\label{equ-FBSDE-18}
\ds\mathbb{E}\[|\widehat{y}_\alpha (t)|^{2} + \int_{t}^{\infty}\!\!|\widehat{\th}_\alpha|^{2}\mathrm ds\]
\leq C_{3}\mathbb{E}\[|\widehat{\xi}|^{2}
+\!\int_{t}^{\infty}\!\! \(\sum_{\iota=1}^2 \big|\mathbf{\Lambda}^\iota[\widehat y^{(\iota)}_\alpha,\widehat z^{(\iota)}_\alpha,\widehat k^{(\iota)}_\alpha] \big|^{2} +|\widehat{\zeta}|^{2}\) \mathrm ds\],\!
\end{equation}
where  $
C_{3,\varepsilon}:=\max \left\{C_{2,\varepsilon}, C_{1,\varepsilon}\left(1+C_{2,\varepsilon}\right)\right\}.
$

Further, substituting \eqref{equ-FBSDE-17} into \eqref{equ-FBSDE-18}, if $K\geq \frac{\kappa_1-\kappa_2}{2}$,
\begin{equation}\label{equ-FBSDE-20}
\begin{aligned}
&\mathbb{E}\[|\widehat{y}_\alpha (t)|^{2} + \ds\int_{t}^{\infty}|\widehat{\th}_\alpha|^{2} \mathrm ds \] \\
&\leq C_{3,\varepsilon}\mathbb{E}\[|\widehat{\xi}|^{2}\!+\frac{\langle\widehat{\xi}, \widehat{y}_\alpha (t)\rangle
 }{L_{R,\widetilde {R}}}
+\frac{1}{L_{R,\widetilde {R}}}\!\ds\int_{t}^{\infty}\!\( (1-\a)[2K-(\k_1-\k_2)]\langle\widehat{x}_\alpha, \widehat{y}_\alpha \rangle +|\widehat{\zeta}|^{2} \\
&\qquad
 +\langle\widehat{\varphi}, \widehat{y}_\alpha \rangle
- \langle\widehat{\phi}, \widehat{x}_\alpha \rangle+\langle\widehat{\psi}, \widehat{z}_\alpha \rangle
+\langle\widehat{\chi}(\cdot), \widehat{k}_\alpha (\cdot)\rangle_\rho  \)\mathrm ds \]\!\!\!\!\\
&\leq C_{3} \mathbb{E} \[ \frac{\epsilon}{  C_3,\varepsilon} |\widehat{y}_\alpha (t)|^{2}
+(1+\frac{C_3,\varepsilon}{4\epsilon L_{R,\widetilde {R}}^{2}} )|\widehat{\xi}|^{2}\\
 &\qquad + \ds\int_{t}^{\infty}\(\frac{\epsilon}{ C_{3,\varepsilon}} |\widehat{\theta}_\alpha|^{2}
+(1+\frac{ C_{3,\varepsilon}}{ 4\epsilon L_{R,\widetilde {R}}^{2}} )|\widehat{\zeta}|^{2}
+ \frac{2K-(\k_1-\k_2)}{\epsilon L_{R,\widetilde {R}}  } (|\widehat{x}_\alpha|^{2}+|\widehat{y}_\alpha|^{2} ) \) \mathrm ds\] \\
&\leq\( \frac{C_{3,\varepsilon}(2K-(\k_1-\k_2))}{\epsilon L_{R,\widetilde {R}}  } +\epsilon\)\mathbb{E}\[|\widehat{y}_\alpha (t)|^{2} + \ds\int_{t}^{\infty}|\widehat{\th}_\alpha|^{2} \mathrm ds \]
+\dfrac{C_{4,\varepsilon}}{2} \mathbb{E}\[|\widehat{\xi}|^{2} + \ds\int_{t}^{\infty}|\widehat{\zeta}|^{2} \mathrm ds \],
\end{aligned}
\end{equation}
where $\epsilon\in (0,1)$,
$
C_{4,\varepsilon}:=   C_{3,\varepsilon}\(1+\frac{C_{3,\varepsilon}}{4\epsilon L_{R,\widetilde {R}}^{2}} \).
$
Then, for any $\epsilon\in(0,1)$, provided $K< \frac{\kappa_1-\kappa_2}{2}+{\mathbf{k}} $ with ${\mathbf{k}}=\frac{L_{R,\widetilde {R}} \epsilon(1-\epsilon) }{2C_{3,\varepsilon}  } $,
  we  get \eqref{equ-FBSDE-8}.

\end{proof}

Then, the continuation lemma can be established  based on the above a priori estimate.
\begin{lemma}\label{Le-FBSDE-2}\sl
Under the same assumptions   of Lemma \ref{Le-FBSDE-1}.
Then, we get the existence of some constant $\d_{0}>0$ independent of $\a$ such that if for some $\a_{0} \in[0, 1)$, FBSDEs \eqref{Hamil-alpha}, $\alpha_0\in[0,1]$ admits a unique solution in $\cL_{\mathbb{F}}^{2, K}\left(t, \infty \right)$ for any
$(\xi, \zeta(\cdot)) \in L_{\mathcal{F}_{t}}^{2}\left(\Omega ; \mathbb{R}^{n}\right) \times \cL_{\mathbb{F}}^{2, K}\left(t, \infty \right)$,
then the same   is true for \eqref{Hamil-alpha}  with $\a = \a_{0} + \d$ and $\d \in\left[0, \d_{0}\right]$, $\a \leq 1$.
\end{lemma}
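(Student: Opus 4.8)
The plan is to carry out the standard one‑step continuation argument: a contraction‑mapping construction whose only substantial input is the a priori estimate \eqref{equ-FBSDE-8} of Lemma~\ref{Le-FBSDE-1}. Fix $\alpha_{0}\in[0,1)$ for which \eqref{Hamil-alpha} at $\alpha_{0}$ is uniquely solvable in $\cL_{\mathbb{F}}^{2,K}(t,\infty)$ for every datum $(\xi,\zeta(\cdot))\in L_{\cF_{t}}^{2}(\Omega;\mathbb{R}^{n})\times\cL_{\mathbb{F}}^{2,K}(t,\infty)$, and write $\alpha=\alpha_{0}+\delta$ with $\delta\ge0$, $\alpha\le1$. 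Comparing the coefficients of \eqref{Hamil-alpha} at $\alpha$ with those at $\alpha_{0}$, the only $\alpha$-dependence sits on the ``state'' terms $\alpha A^{\iota}x^{(\iota)}$, $\alpha C_{i}^{\iota}x^{(\iota)}$, $\alpha M_{j}^{\iota}(e)x^{(\iota)}$, $\alpha Q^{\iota}x^{(\iota)}$, $\alpha(2KI+A^{\iota})^{\top}y^{(\iota)}$, $\alpha(\mathbf{C}^{\iota})^{\top}z^{(\iota)}$, $\alpha\int_{E}\mathbf{M}^{\iota}(e)^{\top}\varrho(\mathrm de)k^{(\iota)}(e)$ and on the damping terms $-(1-\alpha)\kappa_{1}x$, $-(1-\alpha)\kappa_{2}y$, whereas the terms carrying $B^{\iota}(R^{\iota})^{-1}$, $D_{i}^{\iota}(R^{\iota})^{-1}$, $N_{j}^{\iota}(e)(R^{\iota})^{-1}$ in front of $\mathbf{\Lambda}^{\iota}[\cdot]$ are $\alpha$-free and cancel in the comparison. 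Hence there is a bounded linear operator $\mathcal{G}\colon\cL_{\mathbb{F}}^{2,K}(t,\infty)\to\cL_{\mathbb{F}}^{2,K}(t,\infty)$, assembled only from $A^{\iota},\mathbf{C}^{\iota},\mathbf{M}^{\iota},Q^{\iota}$ and the constants $\kappa_{1},\kappa_{2}$ (so its operator norm $L_{\mathcal{G}}$ is controlled by the $L^{\infty}$-bounds in {\bf(H$_{1}$)}--{\bf(H$_{2}$)}, by $|\kappa_{1}|+|\kappa_{2}|$ and by $|K|$), such that $\theta_{\alpha}(\cdot)$ solves \eqref{Hamil-alpha} at $\alpha$ with datum $(\xi,\zeta(\cdot))$ if and only if it solves \eqref{Hamil-alpha} at $\alpha_{0}$ with datum $(\xi,\zeta(\cdot)+\delta\,\mathcal{G}\theta_{\alpha}(\cdot))$.

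Accordingly I would define $\Phi\colon\cL_{\mathbb{F}}^{2,K}(t,\infty)\to\cL_{\mathbb{F}}^{2,K}(t,\infty)$ by letting $\Phi\theta(\cdot)$ be the unique solution of \eqref{Hamil-alpha} at $\alpha_{0}$ with datum $(\xi,\zeta(\cdot)+\delta\,\mathcal{G}\theta(\cdot))$; this is well defined because $\zeta(\cdot)+\delta\,\mathcal{G}\theta(\cdot)\in\cL_{\mathbb{F}}^{2,K}(t,\infty)$ and by the solvability hypothesis at $\alpha_{0}$, and a fixed point of $\Phi$ is precisely a solution of \eqref{Hamil-alpha} at $\alpha$. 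Since the $\alpha_{0}$-system is linear in $(x,y,z,k)$ and affine in $(\xi,\zeta)$ with the zero solution for the zero datum, for $\theta(\cdot),\bar{\theta}(\cdot)\in\cL_{\mathbb{F}}^{2,K}(t,\infty)$ the difference $\Phi\theta(\cdot)-\Phi\bar{\theta}(\cdot)$ solves \eqref{Hamil-alpha} at $\alpha_{0}$ with zero initial state and nonhomogeneous term $\delta\,\mathcal{G}(\theta-\bar{\theta})(\cdot)$. Applying \eqref{equ-FBSDE-8} at $\alpha_{0}$ to $\Phi\theta(\cdot)$ and $\Phi\bar{\theta}(\cdot)$ (which share the initial state $\xi$) and discarding the non-negative boundary term on the left, one gets
\begin{equation}\label{equ-FBSDE-cont}
\big\|\Phi\theta(\cdot)-\Phi\bar{\theta}(\cdot)\big\|_{\cL_{\mathbb{F}}^{2,K}}^{2}\le C\,\delta^{2}\,\mathbb{E}\int_{t}^{\infty}\big|\mathcal{G}(\theta-\bar{\theta})(s)\,\mathrm e^{Ks}\big|^{2}\,\mathrm ds\le C\,L_{\mathcal{G}}^{2}\,\delta^{2}\,\big\|\theta(\cdot)-\bar{\theta}(\cdot)\big\|_{\cL_{\mathbb{F}}^{2,K}}^{2},
\end{equation}
with $C$ the constant of Lemma~\ref{Le-FBSDE-1}, which does not depend on $\alpha_{0}$.

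It remains to choose $\delta_{0}:=\bigl(2L_{\mathcal{G}}\sqrt{C}\bigr)^{-1}$, depending only on $C$, $L_{\mathcal{G}}$ (hence on the coefficients of \eqref{FBSDE-S=0} and on $K$) and in particular independent of $\alpha_{0}$. For $\delta\in[0,\delta_{0}]$, estimate \eqref{equ-FBSDE-cont} shows that $\Phi$ is a $\frac12$-contraction on the Banach space $\cL_{\mathbb{F}}^{2,K}(t,\infty)$, so the Banach fixed point theorem yields a unique fixed point, i.e.\ a unique solution of \eqref{Hamil-alpha} at $\alpha=\alpha_{0}+\delta$ in $\cL_{\mathbb{F}}^{2,K}(t,\infty)$ for every datum, as claimed. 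I expect no hidden difficulty here; the real work was done in Lemma~\ref{Le-FBSDE-1}, and the only points that genuinely require care are (i) that the $\mathbf{\Lambda}^{\iota}[\cdot]$-terms are $\alpha$-free, so that $\mathcal{G}$ involves only $L^{\infty}$-coefficients and is bounded, and (ii) that the step length $\delta_{0}$ inherits its $\alpha_{0}$-independence from the $\alpha$-uniformity of $C$ in \eqref{equ-FBSDE-8} --- which is exactly why that constant was recorded as $\alpha$-free. Finally, iterating this step finitely many times from $\alpha=0$, where \eqref{Hamil-alpha} decouples into a linear MF-BSDE with jumps (uniquely solvable by Lemma~\ref{Le-BSDE-2}) feeding a linear MF-SDE with jumps (uniquely solvable by Lemma~\ref{Le-SDE-1}), delivers the wellposedness of the Hamiltonian system \eqref{FBSDE-S=0}.
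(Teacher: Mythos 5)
Your proposal is correct and follows essentially the same route as the paper: you recast the $\alpha=\alpha_{0}+\delta$ system as the $\alpha_{0}$-system with the nonhomogeneous datum shifted by $\delta$ times the $\alpha$-dependent ``state'' and damping terms (the paper writes this operator out explicitly as $\varphi_{1},\psi_{1},\chi_{1},\phi_{1}$ in \eqref{equ-FBSDE-23}), and then use the $\alpha$-uniform a priori estimate \eqref{equ-FBSDE-8} to show the induced map on $\cL_{\mathbb{F}}^{2,K}(t,\infty)$ is a contraction for $\delta\le\d_{0}$ with $\d_{0}$ independent of $\a_{0}$. Your observation that the $\mathbf{\Lambda}^{\iota}$-terms are $\alpha$-free and hence drop out of the comparison is exactly the point the paper's construction relies on, so there is nothing to add.
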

\begin{proof}
Let $\d_{0}>0$ be some constant which will be determined later.
For any  $\d \in\left[0, \d_{0}\right]$, $\xi \in L_{\mathcal{F}_{t}}^{2}\left(\Omega ; \mathbb{R}^{n}\right)$, $\zeta(\cdot) \in \cL_{\mathbb{F}}^{2, K}\big(t, \infty \big)$ and $\theta(\cdot) \in \cL_{\mathbb{F}}^{2,K}(t, \infty)$,
we consider the following infinite horizon mean-field  FBSDEs with jumps,
\begin{equation}\label{equ-FBSDE-23}
 \left\{
\begin{aligned}
&\!\! \mathrm dX(s)\! =\!\[ \sum_{\iota=1}^2\big(\alpha_0 A^\iota  X^{(\iota)}
-B^\iota  (R^\iota )^{-1}\mathbf{\Lambda}^\iota[Y^{(\iota)},Z^{(\iota)},K^{(\iota)}]\big) -(1-\alpha_0) \kappa_1 X  + \varphi_1 \]   \mathrm ds \\
& \! +\sum\limits_{i=1}^d\[\sum_{\iota=1}^2 \big( \alpha_0 C_i^\iota  X^{(\iota)}
-D_i^\iota  (R^\iota) ^{-1}\mathbf{\Lambda}^\iota[Y^{(\iota)},Z^{(\iota)},K^{(\iota)}]\big) +\psi_{1,i} \]\, \mathrm dW_i(s)  \\
& \!    +\sum\limits_{j=1}^l\ds\int_E\[ \sum_{\iota=1}^2 \!\big(\alpha_0 M_j^{\iota} (e)  X^{(\iota)}
-N_j^\iota(e)   (R^\iota) ^{-1}\mathbf{\Lambda}^\iota[Y^{(\iota)},Z^{(\iota)},K^{(\iota)}]\big) + \chi_{1,j}(e) \] \, \widetilde{\mu}_j(\mathrm ds, \mathrm de) ,\! \  s\geq t,\!\!\!\!\!\\
&\!\! \mathrm dY(s)  \! =\!-\[\sum_{\iota=1}^2 \alpha_0 \big[(2 K I+A^\iota) ^{\top}Y^{(\iota)}+(\mathbf{C}^\iota )^{\top} Z^{(\iota)}
+\ds\int_E  \mathbf{M}^{i}(e)^{\top}\varrho(\mathrm de)K^{(\iota)}(e)+Q^\iota X^{(\iota)} \big]\\
&
-(1-\alpha_0) \kappa_2 Y +\phi_1 \]  \mathrm ds
+  \sum\limits_{i=1}^dZ_i(s)  \, \mathrm dW_i (s)  + \ds\sum\limits_{j=1}^l\int_E K_j(s,e) \, \widetilde{\mu} _j(\mathrm ds, \mathrm de),  \quad  s \geq t, \\
&\!\! X(t)\!=\!\xi,
\end{aligned}\right.\!
\end{equation}
where $$\begin{aligned}
&\varphi_{1}(\cdot):=\delta \sum_{\iota=1}^2 A^\iota(\cdot) x^{(\iota)}(\cdot)+\delta \kappa_1 x(\cdot)+ \varphi(\cdot),\q \psi_{1,i}(\cdot):= \delta \sum_{\iota=1}^2 C_i^\iota(\cdot) x^{(\iota)}(\cdot)+ \psi_i(\cdot).\\
&\chi_{1,j}(\cdot,\cdot):=\delta \sum_{\iota=1}^2  M_j^{\iota}(\cdot,\cdot) x^{(\iota)}(\cdot)+ \chi_j(\cdot,\cdot),\\
&\phi_1(\cdot):=\delta \sum_{\iota=1}^2 \big[(2 K I+A^\iota(\cdot))^{\top} y^{(\iota)}(\cdot)
+\mathbf{C}^\iota(\cdot)^{\top} z^{(\iota)}(\cdot) + \int_E \mathbf{M}^{\iota}(\cdot,e)^{\top}\varrho(\mathrm de)k^{(\iota)}(\cdot,e)+Q^\iota(\cdot) x^{(\iota)}(\cdot)\big] \\
&\qquad\quad +\delta \kappa_2 y(\cdot)+\phi(\cdot).
\end{aligned}$$
In fact,  the above  FBSDEs \eqref{equ-FBSDE-23} defines a mapping $\mathcal{T}_{\a_{0} + \d} $ as
$$
 \Theta(\cdot):=\big(X(\cdot)^{\top}, Y(\cdot)^{\top}, Z(\cdot)^{\top}, K(\cdot, \cd)^{\top}\big)^{\top}=\mathcal{T}_{\a_{0} + \d} (\theta(\cdot)):\cL_{\mathbb{F}}^{2, K}\left(t, \infty \right) \mapsto\cL_{\mathbb{F}}^{2, K}\left(t, \infty \right).
$$
which is guaranteed by the given unique solvability of   FBSDEs \eqref{equ-FBSDE-23}   in   $\cL_{\mathbb{F}}^{2, K}\left(t, \infty \right)$
and the arbitrariness of $\theta(\cdot)$.
We claim  that the mapping $\mathcal{T}_{\a_{0} + \d} $  is contractive when $\d$ is small enough.
For this, consider any  $\theta(\cdot)=(x(\cdot)^{\top},y(\cdot)^{\top},z(\cdot)^{\top},k(\cdot,\cdot)^{\top})^{\top},$ $ \bar{\th}(\cdot) =(\bar x(\cdot)^{\top},\bar y(\cdot)^{\top},\bar z(\cdot)^{\top},\bar k(\cdot,\cdot)^{\top})^{\top}\in \cL_{\mathbb{F}}^{2, K}\left(t, \infty \right)$,
let $\Theta(\cdot) =(X(\cdot)^{\top},Y(\cdot)^{\top},Z(\cdot)^{\top},K(\cdot,\cdot)^{\top})^{\top}=\mathcal{T}_{\a_{0}+\d}(\theta(\cdot))$,
 $\bar{\Theta}(\cdot) =(\bar X(\cdot)^{\top},\bar Y(\cdot)^{\top},\bar Z(\cdot)^{\top},\bar K(\cdot,\cdot)^{\top})^{\top}=\mathcal{T}_{\a_{0}+\d}(\bar{\th}(\cdot))$.
Setting $ \widehat \theta(\cdot):=\theta(\cdot)-\bar \theta(\cdot)$, the a prior estimate in Lemma \ref{Le-FBSDE-1} implies
$$
\begin{aligned}
&\mathbb{E}\[| {Y}(t)-\bar{Y}(t)|^{2} + \ds\int_{t}^{\infty}| {\Th}(s)-\bar{\Th}(s)|^{2}\mathrm ds\]\\
&\leq C \d^{2} \mathbb{E} \ds\int_{t}^{\infty}\(\big|\sum_{\iota=1}^2 A^\iota \widehat{x}^{(\iota)} + \kappa_1\widehat{x} \big|^{2}
+\sum\limits_{i=1}^d \big|\sum_{\iota=1}^2 C_i^\iota \widehat{x}^{(\iota)}\big|^{2}
+\sum\limits_{j=1}^l\int_E\big|\sum_{\iota=1}^2 M_j^{\iota}(e) \widehat{x}^{(\iota)} \big|^2\rho_j(\mathrm de)\\
&\quad + \big|\sum_{\iota=1}^2 \big[(2 K I+A^\iota)^{\top}\widehat{y}^{(\iota)}
+(\mathbf{C}^\iota)^{\top} \widehat{z}^{(\iota)} +\ds\int_E \mathbf{M}^\iota(e)^{\top} \varrho(\mathrm de) \widehat{k}^{(\iota)}(e) +Q^\iota\widehat{x}^{(\iota)} \big] + \kappa_2 \widehat y \big|^{2}
\)\mathrm ds\\
& \leq C \d^{2} \mathbb{E} \ds\int_{t}^{\infty}| {\theta}(s)-\bar \theta(s)|^{2}\mathrm ds,
\end{aligned}
$$
where the constant $C>0$ is independent of $\a_{0}$ and $\d$.
Then, the mapping $\mathcal{T}_{\a_{0}+\d}$ is contractive if we choose
$\d \in[0, \d_{0}]$ with $\d_{0}=\frac{1}{2 \sqrt{C}}$.
Therefore, we get the existence of  the unique fixed point   $\Th^{*}(\cdot) \in \cL_{\mathbb{F}}^{2, K}\left(t, \infty \right)$, which  is indeed the unique solution to MF-FBSDEs \eqref{Hamil-alpha} with $\alpha=\alpha_0+\delta $.

\end{proof}
\begin{lemma}\label{Le-FBSDE-0}\sl
Assume that {\bf{(H$_{1}$)}}  holds and $\k_1> -\k_2$. Then, for any
$K \in \left(-\k_2, \k_1\right)$, any $\xi \in L_{\mathcal{F}_{t}}^{2}\left(\Omega; \mathbb{R}^{n}\right)$, and
$\zeta(\cdot)\in \cL_{\mathbb{F}}^{2, K}(t, \infty)$,
 MF-FBSDEs \eqref{Hamil-alpha} with $\alpha=0$ admits a unique solution in the space $\cL_{\mathbb{F}}^{2, K}(t, \infty).$
\end{lemma}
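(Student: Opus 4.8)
The plan is to exploit the fact that at $\alpha=0$ the system \eqref{Hamil-alpha} partially decouples. Setting $\alpha=0$, the backward component no longer involves $x_0(\cdot)$ and reads
$$\mathrm dy_0(s)=-\big(-\kappa_2\, y_0(s)+\phi(s)\big)\,\mathrm ds+\sum_{i=1}^d z_{0,i}(s)\,\mathrm dW_i(s)+\sum_{j=1}^l\int_E k_{0,j}(s,e)\,\widetilde\mu_j(\mathrm ds,\mathrm de),\qquad s\ge t,$$
so $(y_0(\cdot),z_0(\cdot),k_0(\cdot,\cdot))$ solves an autonomous linear MF-BSDE with jumps; the forward component still contains $(y_0,z_0,k_0)$ through the terms $\mathbf{\Lambda}^\iota$, but once those are known it is merely a linear SDE with jumps for $x_0(\cdot)$. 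Hence I would first solve the backward equation and then the forward one.

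For the backward equation I would regard it as an instance of \eqref{equ-BSDE-1} with effective data $A_K(\cdot)\equiv-\kappa_2 I$ (equivalently $A(\cdot)\equiv-(\kappa_2+2K)I$), $\bar A(\cdot)=\mathbf{C}(\cdot)=\bar{\mathbf{C}}(\cdot)=\mathbf{M}(\cdot,\cdot)=\bar{\mathbf{M}}(\cdot,\cdot)=0$, and non-homogeneous term $f(\cdot)=\phi(\cdot)\in L^{2,K}_{\mathbb F}(t,\infty;\mathbb R^n)$. A short computation from \eqref{kappa} for this data gives $\kappa_1=\kappa_2=\kappa_2+2K$, hence $\kappa=\kappa_2+2K$; since $K>-\kappa_2$ one has $K<\kappa_2+2K=\kappa$, and also $K\le K_1:=K$. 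Thus Lemma \ref{Le-BSDE-2} applies and produces a unique $(y_0(\cdot),z_0(\cdot),k_0(\cdot,\cdot))\in L^{2,K}_{\mathbb F}(t,\infty;\mathbb R^n)\times L^{2,K}_{\mathbb F}(t,\infty;\mathbb R^{nd})\times\cK^{2,K}_{\mathbb F}(t,\infty;\mathbb R^{nl})$.

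Substituting this triple into the forward component of \eqref{Hamil-alpha} at $\alpha=0$, and using that $B^\iota(R^\iota)^{-1}$, $\mathbf D^\iota(R^\iota)^{-1}$ and $\mathbf N^\iota(\cdot)(R^\iota)^{-1}$ are bounded (by {\bf (H$_2$)} and Condition {\bf (PD)}), the functions $\mathbf{\Lambda}^\iota[y_0^{(\iota)},z_0^{(\iota)},k_0^{(\iota)}](\cdot)$ lie in $L^{2,K}_{\mathbb F}$, so the modified free terms $\varphi-\sum_\iota B^\iota(R^\iota)^{-1}\mathbf{\Lambda}^\iota[\cdots]$, $\psi_i-\sum_\iota D_i^\iota(R^\iota)^{-1}\mathbf{\Lambda}^\iota[\cdots]$ and $\chi_j-\sum_\iota N_j^\iota(R^\iota)^{-1}\mathbf{\Lambda}^\iota[\cdots]$ lie in $L^{2,K}_{\mathbb F}$ (resp.\ in $\cK^{2,K}_{\mathbb F}$). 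The forward equation then becomes a linear SDE with jumps of the form \eqref{equ-SDE-1} with effective data $A(\cdot)\equiv-\kappa_1 I$ and all barred-, $\mathbf C$- and $\mathbf M$-coefficients zero, for which \eqref{kappa} yields $\kappa_1=\kappa_2=\kappa_1$, hence $\kappa=\kappa_1>K$; classical well-posedness of linear SDEs with jumps gives a unique $\mathbb F$-adapted $x_0(\cdot)$, and Lemma \ref{Le-SDE-1} then places it in $L^{2,K}_{\mathbb F}(t,\infty;\mathbb R^n)$. Therefore $(x_0(\cdot),y_0(\cdot),z_0(\cdot),k_0(\cdot,\cdot))\in\cL^{2,K}_{\mathbb F}(t,\infty)$ solves \eqref{Hamil-alpha} at $\alpha=0$.

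Uniqueness follows the same route: any solution in $\cL^{2,K}_{\mathbb F}(t,\infty)$ has its $(y,z,k)$-part solving the autonomous BSDE above, hence equal to $(y_0,z_0,k_0)$ by Lemma \ref{Le-BSDE-2}, and then its $x$-part solving the resulting linear SDE, hence equal to $x_0$ by Lemma \ref{Le-SDE-1}. I do not expect a genuine obstacle here; the only delicate bookkeeping is to match the two effective dissipation constants to the endpoints of the admissible window for $K$ — the backward part forces $K>-\kappa_2$ and the forward part forces $K<\kappa_1$, which is exactly $K\in(-\kappa_2,\kappa_1)$ — and to check that the free terms remain in the weighted spaces after the substitution, both immediate from the boundedness afforded by {\bf (H$_1$)}, {\bf (H$_2$)}, {\bf (PD)}.
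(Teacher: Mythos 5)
Your proposal is correct and follows essentially the same route as the paper: observe that at $\alpha=0$ the system decouples, solve the (mean-field-free, constant-coefficient) backward equation first under $K>-\kappa_2$, then substitute into the forward equation and solve it under $K<\kappa_1$, with uniqueness obtained by running the same two steps in reverse. The only cosmetic difference is that the paper invokes Lemmas~2.2 and~2.5 of the cited reference for the two decoupled equations, whereas you specialize the paper's own Lemma~\ref{Le-SDE-1} and Lemma~\ref{Le-BSDE-2} to the degenerate coefficient choices, and your bookkeeping of the effective dissipation constants matching the endpoints of $(-\kappa_2,\kappa_1)$ is accurate.
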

\begin{proof}
 Obviously, FBSDEs \eqref{Hamil-alpha}$_0$ reads
\begin{equation}\label{equ-FBSDE-7}
\left\{
\begin{aligned}
&\!\! \mathrm dx_0(s) = \[-\sum_{\iota=1}^2 B^\iota  (R^\iota) ^{-1}\mathbf{\Lambda}^\iota[y^{(\iota)}_0,z^{(\iota)}_0,k^{(\iota)}_0]
-\kappa_1 x_0  +\varphi \]  \, \mathrm ds \\
& \quad +\sum\limits_{i=1}^d\[-\sum_{\iota=1}^2 D_i^\iota  (R^\iota) ^{-1}\mathbf{\Lambda}^\iota[y^{(\iota)}_0,z^{(\iota)}_0,k^{(\iota)}_0] +\psi_i \]\, \mathrm dW_i(s)  \\
& \quad +\sum\limits_{j=1}^l\ds\int_E\[- \sum_{\iota=1}^2N_j^\iota(e) (R^\iota) ^{-1}\mathbf{\Lambda}^\iota[y^{(\iota)}_0,z^{(\iota)}_0,k^{(\iota)}_0]  +\chi_j(e) \] \, \widetilde{\mu}_j(\mathrm ds, \mathrm de), \quad s\geq t,\\
&\!\! \mathrm dy_0(s)   =[\kappa_2 y_0 -  \phi   ] \, \mathrm ds +\sum\limits_{i=1}^dz_{0,i}(s)  \, \mathrm dW_i (s)  + \ds\sum\limits_{j=1}^l\int_E k_{0,j}(s,e) \, \widetilde{\mu} _j(\mathrm ds, \mathrm de),  \quad  s \geq t, \\
&\!\! x_0(t)=\xi,
\end{aligned}\right.
\end{equation}
 which is in fact  decoupled.

The BSDE in \eqref{equ-FBSDE-7}  is an infinite horizon BSDE without mean-field terms, so that we  can apply Lemma 2.5 in \cite{Wei-Yu-2021} to get $(y_{0}(\cdot),z_{0}(\cdot), k_{0}(\cdot, \cd))\in L_{\mathbb{F}}^{2, K} (t, \infty ; \mathbb{R}^{n})
\times L_{\mathbb{F}}^{2, K}(t, \infty ; \mathbb{R}^{n d})
\times \cK_{\mathbb{F}}^{2, K}(t, \infty ; \mathbb{R}^{n l  })$  firstly when $K>-\kappa_2$.
Further, according to  Lemma 2.2 in  \cite{Wei-Yu-2021}, when $K<\kappa_1$, $x_{0}(\cdot)\in L_{\mathbb{F}}^{2, K} (t, \infty ; \mathbb{R}^{n})$ can be solved from the SDE in \eqref{equ-FBSDE-7} under the known $(y_{0}(\cdot),z_{0}(\cdot), k_{0}(\cdot, \cd))$.
Therefore, the desired result is got.
 \end{proof}
%
%
%
%
%

%

%
%
\begin{proposition}\label{Th-FBSDE-1}\sl
Assume {\bf{(H$_{1}$)}}, {\bf{(H$_{2}$)}} and Condition {\bf (PD)} hold.
Let   $K<\kappa$.
Then,  there exists some constant ${\mathbf{k}}>0$, such that for any $\displaystyle K\in \[\frac{\kappa_1-\kappa_2}{2}, \frac{\kappa_1-\kappa_2}{2}+{\mathbf{k}}\)$, for any $\left(t, x_{t}\right) \in$ $[0, \infty) \times L_{\mathcal{F}_{t}}^{2}\left(\Omega ; \mathbb{R}^{n}\right)$,
FBSDEs \eqref{FBSDE-S=0} admits a unique solution $\theta (\cdot) \in \cL_{\mathbb{F}}^{2, K}(t, \infty)$.
Moreover,
\begin{equation}\label{equ-FBSDE-4}
\begin{aligned}
&\mathbb{E}\[|y (t)\mathrm e^{K t}|^{2} + \ds\int_{t}^{\infty}|\theta (s) \mathrm e^{K s}|^{2} \mathrm ds\]
\leq C \mathbb{E}|x_t \mathrm e^{K t}|^{2},
\end{aligned}
\end{equation}
where $C$ is the same constant as in \eqref{equ-FBSDE-8}.
Furthermore, let $\bar{\th}(\cdot) \in \cL_{\mathbb{F}}^{2, K}(t, \infty)$ be a solution to  FBSDEs \eqref{FBSDE-S=0}
with another $\bar{x}_t\in L_{\mathcal{F}_{t}}^{2}\left(\Omega ; \mathbb{R}^{n}\right)$. Then,
\begin{equation}\label{equ-FBSDE-5}
\begin{aligned}
&\mathbb{E}\[\big|(y (t)-\bar{y} (t)) \mathrm e^{K t}\big|^{2}
+\ds\int_{t}^{\infty}\big|(\theta (s)-\bar{\theta} (s)) \mathrm e^{K s}\big|^{2}\mathrm ds \]
 \leq C \mathbb{E}|(x_t-\bar{x}_t) \mathrm e^{ K t}|^{2}.
\end{aligned}
\end{equation}
\end{proposition}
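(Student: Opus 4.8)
The plan is to run the method of continuation, using the three preceding lemmas as the building blocks, and then to read off the two estimates directly from the a priori bound \eqref{equ-FBSDE-8}. First I would observe that FBSDEs \eqref{FBSDE-S=0} is exactly the $\alpha=1$ member of the parameterized family \eqref{Hamil-alpha}: for $\alpha=1$ the terms $(1-\alpha)\kappa_1 x_\alpha$ and $(1-\alpha)\kappa_2 y_\alpha$ vanish, and one takes the initial datum $\xi=x_t$ together with the trivial inhomogeneity $\zeta(\cdot)\equiv 0\in\cL_{\mathbb{F}}^{2,K}(t,\infty)$. Hence it suffices to show that \eqref{Hamil-alpha} with $\alpha=1$ is uniquely solvable in $\cL_{\mathbb{F}}^{2,K}(t,\infty)$ for every pair $(\xi,\zeta(\cdot))\in L_{\mathcal{F}_{t}}^{2}(\Omega;\mathbb{R}^n)\times\cL_{\mathbb{F}}^{2,K}(t,\infty)$.

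Next I would fix the constant $\mathbf{k}>0$ furnished by Lemma \ref{Le-FBSDE-1} and, for the remainder, take any $K$ lying in the (nonempty) range $\big[\tfrac{\kappa_1-\kappa_2}{2},\tfrac{\kappa_1-\kappa_2}{2}+\mathbf{k}\big)$ that also satisfies $K<\kappa$. For such a $K$ one has $\tfrac{\kappa_1-\kappa_2}{2}\le K<\kappa\le\kappa_1$, which forces $\kappa_1>-\kappa_2$; consequently $K\ge\tfrac{\kappa_1-\kappa_2}{2}>\tfrac{-\kappa_2-\kappa_2}{2}=-\kappa_2$ and $K<\kappa_1$, i.e.\ $K\in(-\kappa_2,\kappa_1)$. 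Therefore Lemma \ref{Le-FBSDE-0} applies and yields the unique solvability of \eqref{Hamil-alpha} with $\alpha=0$, for arbitrary data. Now I would invoke Lemma \ref{Le-FBSDE-2}, which provides a step length $\delta_0>0$ independent of $\alpha$ such that unique solvability at some $\alpha_0\in[0,1)$ propagates to all $\alpha\in[\alpha_0,\alpha_0+\delta_0]$. Starting from $\alpha_0=0$ and iterating this step at most $\lceil 1/\delta_0\rceil$ times reaches $\alpha=1$, giving a unique solution $\theta(\cdot)\in\cL_{\mathbb{F}}^{2,K}(t,\infty)$ of \eqref{Hamil-alpha}$_1$, hence of \eqref{FBSDE-S=0}, for every initial pair $(t,x_t)$.

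For the quantitative conclusions I would specialize the a priori estimate \eqref{equ-FBSDE-8} (with $\alpha=1$, and with the same constant $C$ that depends only on $\kappa$, the $L_{\varepsilon,j}$ and the norms of the coefficients, not on $\alpha$). Applying it to the solution $\theta(\cdot)$ of \eqref{FBSDE-S=0} with data $(x_t,0)$ and to the trivial solution $\bar\theta(\cdot)\equiv0$ associated with data $(0,0)$ gives precisely \eqref{equ-FBSDE-4}. Applying \eqref{equ-FBSDE-8} to the two solutions attached to initial states $x_t$ and $\bar x_t$ (both with zero inhomogeneity) gives \eqref{equ-FBSDE-5}; the special case $x_t=\bar x_t$ re-derives the uniqueness already obtained above.

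The main obstacle is not really an obstacle at this stage: the analytic substance is entirely carried by the a priori estimate of Lemma \ref{Le-FBSDE-1} and the decoupled base case of Lemma \ref{Le-FBSDE-0}. What remains is bookkeeping that must be done carefully, namely checking that the window of admissible $K$ forced by the continuation estimate (a bounded interval just above $\tfrac{\kappa_1-\kappa_2}{2}$) is compatible with the ranges required in Lemma \ref{Le-FBSDE-0} and, through it, in Lemmas \ref{Le-SDE-1} and \ref{Le-BSDE-1}, and that the continuation step $\delta_0$ and the estimate constant $C$ are genuinely uniform in $\alpha\in[0,1]$. One should also verify the trivial point that $\zeta\equiv0\in\cL_{\mathbb{F}}^{2,K}(t,\infty)$, so that the $\alpha=1$ specialization of \eqref{Hamil-alpha} is a legitimate instance of the solvability statement just proved.
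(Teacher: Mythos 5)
Your proposal is correct and follows essentially the same route as the paper: identify \eqref{FBSDE-S=0} as the $\alpha=1$ instance of \eqref{Hamil-alpha} with data $(x_t,0)$, verify $K\in(-\kappa_2,\kappa_1)$ so Lemma \ref{Le-FBSDE-0} gives the base case, iterate the uniform continuation step of Lemma \ref{Le-FBSDE-2} to reach $\alpha=1$, and read both estimates off \eqref{equ-FBSDE-8} with the appropriate choices of $(\bar\xi,\bar\zeta)$. Your explicit check that the admissible window of $K$ is compatible with the hypothesis of Lemma \ref{Le-FBSDE-0} is in fact slightly more careful than the paper's one-line remark.
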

\begin{proof}
Note that $K=\frac {\kappa_1-\kappa_2}{2}$ and $K<\kappa$ imply $\k_1> -\k_2$.
Therefore, for any $(\xi, \zeta(\cdot))\in  L_{\mathcal{F}_{t}}^{2}\left(\Omega; \mathbb{R}^{n}\right)\times\cL_{\mathbb{F}}^{2, K}(t, \infty)$  and  $\a \in[0, 1]$, we can apply  Lemma  \ref{Le-FBSDE-0} and  Lemma \ref{Le-FBSDE-2} to get the uniquely solvability of    FBSDEs \eqref{Hamil-alpha}, $\alpha\in[0,1]$  in $\cL_{\mathbb{F}}^{2, K}\left(t, \infty \right)$.
Specially, when $\a=1$ and $(\xi, \zeta(\cdot))=(x_t, 0)$, \eqref{Hamil-alpha}  coincides with \eqref{FBSDE-S=0}. Therefore,  the unique solvability of \eqref{FBSDE-S=0} is derived.

Finally, the estimate \eqref{equ-FBSDE-5} follows from \eqref{equ-FBSDE-8} by letting $\a=1, (\xi, \zeta(\cdot))=(x_t, 0)$ and $(\bar{\xi}, \bar{\zeta}(\cdot))=(\bar{x}_t, 0)$. Moreover,  if $(\bar{\xi}, \bar{\zeta}(\cdot))=(0, 0)$, we get \eqref{equ-FBSDE-4}    from \eqref{equ-FBSDE-5}.
%

%
\end{proof}
\subsection{The case  $S(\cd )$, $\bar S(\cd )$ being not all zero matrices} 
\label{subsection-S-neq-0}
Now, we go back to the general case \eqref{Hamil}, i.e., $S(\cd )$, $\bar S(\cd )$ are not all zero matrices. We will not follow the procedures in Subsection \ref{subsection-S-0}, but resort to a kind of  linear transformation techniques (referring to \cite{Wei-Yu-2021}).
Such  a linear transformation  can simplify  the general case to the  case  $S(\cdot )$ and $\bar S(\cdot)$ all being zero. Concretely, we introduce
\begin{equation}\label{equ-MFLQ-25}
 \mathbf{u}(\cdot):=u(\cdot)+R^1(\cdot)^{-1} S^1(\cdot) x^{(1)}(\cdot) + {R}^2(\cdot)^{-1} {S}^2(\cdot) x^{(2)}(\cdot).
\end{equation}
Then, the controlled system \eqref{state} and the cost functional \eqref{cost} can be rewritten as follows,
\begin{equation}\label{equ-MFLQ-26}
\left\{
\begin{aligned}
&\!\!\mathrm dx(s)=\sum_{\iota=1}^2\big[\mathscr{A}^{\iota}x^{(\iota)}+B^{\iota}\textbf{u} ^{(\iota)}\big] \, \mathrm ds   +\sum_{i=1}^d\sum_{\iota=1}^2\big[ \mathscr{C}_i^{\iota}x^{(\iota)}+D_i^{\iota}\textbf{u} ^{(\iota)}\big] \, \mathrm dW_i(s) \!\!\! \\
& \!\!\quad  +\sum_{j=1}^l\ds\int_E \sum_{\iota=1}^2\big[ \mathscr{M}^{\iota}_j( e) x^{(\iota)}+ N^{\iota}_j( e)\textbf{u}^{(\iota)}\big] \, \widetilde{\mu} _j\left(\mathrm ds, \mathrm de\right),  \quad  s\geq t, \!\\
&\!\!x(t)=x_{t},
\end{aligned}\right.
\end{equation}
and
\begin{equation}\label{equ-MFLQ-27}
\begin{aligned}
& \textbf{J}^{\widetilde K}\left(t, x_{t} ;  \textbf{u}(\cdot)\right) :=J^{K}\left(t, x_{t} ; u(\cdot)\right)=\dfrac{1}{2} \mathbb{E} \ds\int_{t}^{\infty}
\sum_{\iota=1}^2e^{2 \widetilde K s}\big[\big\langle   \mathscr{Q}^{\iota}  x^{(\iota)}, x^{(\iota)}\big\rangle
+\big\langle R^{\iota} \textbf{u} ^{(\iota)}, \textbf{u} ^{(\iota)}\big\rangle\big] \mathrm ds,
\end{aligned}
\end{equation}
where $\widetilde K=K$ and
\begin{equation}\label{bf-A-i}\begin{aligned}
&   \mathscr{A} ^{\iota}(\cdot):= A^{\iota}(\cdot)-B^{\iota}(\cdot)(R^{\iota})^{-1}(\cdot)S^{\iota}(\cdot),\q  \mathscr{C}^{\iota}_i(\cdot):= C_i^{\iota}(\cdot)-D_i^{\iota}(\cdot)(R^{\iota})^{-1}(\cdot)S^{\iota}(\cdot),\\
&  \mathscr{M}^{\iota}_j(\cdot,\cdot):=M^{\iota}_j(\cdot,\cdot)-N^{\iota}_j(\cdot,\cdot)(R^{\iota})^{-1}(\cdot)S^{\iota}(\cdot),\q \mathscr{Q}^{\iota}(\cdot):=Q^{\iota}(\cdot)-S^{\iota}(\cdot)^\top (R^{\iota})^{-1}(\cdot)S^{\iota}(\cdot),\\
& \mathscr{C}^{\iota} (\cdot)=(\mathscr{C}^{\iota}_1(\cdot)^\top,\dots,\mathscr{C}^{\iota}_l(\cdot)^\top)^\top,\q  \mathscr{M}^{\iota} (\cdot,\cdot)=(\mathscr{M}^{\iota}_1(\cdot,\cdot)^\top,\dots,\mathscr{M}^{\iota}_l(\cdot,\cdot)^\top)^\top.
\end{aligned}\end{equation}
For the control problem based on \eqref{equ-MFLQ-26} and \eqref{equ-MFLQ-27}, the corresponding Hamiltonian system is
\begin{equation}\label{Hamil-s-neq0}
\left\{
\begin{aligned}
&\!\! \mathrm dx (s)\!=\! \sum_{\iota=1}^2\big( \mathscr{A}^\iota x^{(\iota)}    -B^\iota  (R^\iota) ^{-1}\mathbf{\Lambda}^\iota[y^{(\iota)},z^{(\iota)},k^{(\iota)}]  \big) \, \mathrm ds  \\
&+\sum_{i=1}^d\sum_{\iota=1}^2\big( C_i^\iota x^{(\iota)}  -D_i^\iota (R^\iota)^{-1}\mathbf{\Lambda}^\iota[y^{(\iota)},z^{(\iota)},k^{(\iota)}] \big)\, \mathrm dW_i(s)\!\! \\
&+\sum_{j=1}^l\ds\int_E \sum_{\iota=1}^2\big( \mathscr{M}_j^{\iota} (e)x^{(\iota)}  -N_j^\iota (e)(R^\iota)^{-1}\mathbf{\Lambda}^\iota[y^{(\iota)},z^{(\iota)},k^{(\iota)}] \big) \, \widetilde{\mu}_j\left(\mathrm ds, \mathrm de\right), \quad s\geq t,\\
&\!\!\mathrm dy (s)\! =\!-\sum_{\iota=1}^2\big(  \mathscr{Q}^\iota x^{(\iota)} +(2\widetilde K I+ \mathscr{A}^\iota)^{\top}y^{(\iota)}
+\sum_{i=1}^d(\mathscr{C}^\iota)^{\top} z^{(\iota)}  +\int_E   \mathscr{M}^{\iota}(e)^\top\varrho(\mathrm de)  k^{(\iota)}(e) \big) \, \mathrm d s  \\
& +\sum_{i=1}^d z _i(s) \, \mathrm dW_i(s) + \ds\sum_{j=1}^l\int_E k_j (s, e) \, \widetilde{\mu}_j \left(\mathrm ds, \mathrm de\right),   \quad s\geq t,\\
&\!\!x (t)\!=\!x_{t}.
\end{aligned}\right.
\end{equation}
Note that, in \eqref{equ-MFLQ-27}, the cross terms disappear. Therefore, we can apply the obtained result in Subsection \ref{subsection-S-0} to derive the wellposedness of \eqref{Hamil-s-neq0}, which is similar to \eqref{FBSDE-S=0} by considering the notations \eqref{bf-A-i}. To  formulate the wellposedness result, we introduce  the following new notations,
\begin{equation}\left\{
\begin{aligned}
&\!\!\kappa'_1:=-\frac{1}{2}\mathop{\sup}\limits_{s \in[t, \infty)}\lambda_{\max }\(  \mathscr{A}^2(s) +
\mathscr{A}^2(s) ^{\top} \),\\
&\!\!\kappa'_2:=-\frac{1}{2}\mathop{\sup}\limits_{s \in[t, \infty)}\lambda_{\max }\( \mathscr{A}^1(s)+\mathscr{A}^1(s)^{\top}+ \mathscr{C}^1(s)^{\top}\mathscr{C}^1(s)
+\int_\mathcal{E}\mathscr{M}^1(s, e)^{\top}\varrho(\mathrm de)\mathscr{M}^1(s, e) \),\\
&\!\! \kappa' :=\min\{\kappa'_1,\kappa'_2 \}.
\end{aligned}\right.
\end{equation}

\begin{proposition}\label{Pro-Sneq0}\sl
Let {\bf{(H$_{1}$)}}, {\bf{(H$_{2}$)}} and {\bf (PD)} hold,  and $\widetilde{K}<\widetilde{\kappa}$.
Then,  there exists some constant $\widetilde{\mathbf{k}}>0$, such that for any $\displaystyle \widetilde K\in \[\frac{\widetilde \kappa_1-\widetilde\kappa_2}{2}, \frac{\widetilde\kappa_1-\widetilde\kappa_2}{2}+\widetilde{\mathbf{k}}\)$, for any $\left(t, x_{t}\right) \in [0, \infty) \times L_{\mathcal{F}_{t}}^{2}\left(\Omega ; \mathbb{R}^{n}\right)$, the infinite horizon MF-FBSDE with jumps  \eqref{Hamil-s-neq0} (or \eqref{Hamil} with $K=\widetilde K$) admits a unique solution $\theta(\cdot) \in \cL_{\mathbb{F}}^{2, \widetilde{K}}(t, \infty)$. Further, $u^*(\cdot) $ given by \eqref{optimal-u}  lying in $ \mathcal{U} ^{\ti{K}}[t, \infty)$ is a unique optimal control of Problem (LQ) at $(t, x_t)$.

\end{proposition}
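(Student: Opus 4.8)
The plan is to reduce the general case to the zero–cross–term situation settled in Subsection~\ref{subsection-S-0} by means of the linear transformation \eqref{equ-MFLQ-25}, and then transfer the conclusions back. First I would check that \eqref{equ-MFLQ-25} is a genuine bijection $u(\cdot)\longleftrightarrow\mathbf u(\cdot)$ between $\mathcal{U}^{K}[t,\infty)$ and $\mathcal{U}^{\widetilde K}[t,\infty)$ (recall $\widetilde K=K$): Condition {\bf(PD)} forces $R^{1}(\cdot),R^{2}(\cdot)\gg0$, so $(R^{1})^{-1},(R^{2})^{-1}\in L^{\infty}$, and together with $S^{1}(\cdot),S^{2}(\cdot)\in L^{\infty}$ and Lemma~\ref{Le-SDE-1} applied to \eqref{equ-MFLQ-26} (whose coefficients $\mathscr A^{\iota},\mathscr C^{\iota}_{i},\mathscr M^{\iota}_{j}$ still satisfy {\bf(H$_1$)}, since $(R^{\iota})^{-1}S^{\iota}$ is bounded) the common state $x(\cdot)$, hence $x^{(1)}(\cdot)$ and $x^{(2)}(\cdot)$, lies in $L^{2,\widetilde K}_{\mathbb F}(t,\infty;\mathbb R^{n})$ as soon as $\widetilde K<\widetilde\kappa$; this is precisely where the hypothesis $\widetilde K<\widetilde\kappa$ enters. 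A direct substitution based on the identity $\mathbf u^{(\iota)}=u^{(\iota)}+(R^{\iota})^{-1}S^{\iota}x^{(\iota)}$ ($\iota=1,2$), together with the convention $\Gamma^{1}=\Gamma$, $\Gamma^{2}=\Gamma+\bar\Gamma$, shows that \eqref{state} and \eqref{equ-MFLQ-26} describe the same controlled state process under this correspondence, and a completing‑the‑square computation (using $R^{\iota}$ symmetric and invertible) turns the running cost $g$ into the uncoupled form appearing in \eqref{equ-MFLQ-27}, so that $\mathbf J^{\widetilde K}(t,x_{t};\mathbf u(\cdot))=J^{K}(t,x_{t};u(\cdot))$. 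Consequently Problem~(MF-LQ) is (uniquely) open‑loop solvable at $(t,x_{t})$ if and only if the control problem \eqref{equ-MFLQ-26}--\eqref{equ-MFLQ-27} is, with the optimal controls corresponding under \eqref{equ-MFLQ-25}.

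Next I would observe that the transformed problem is exactly of the type treated in Subsection~\ref{subsection-S-0}: its coefficients satisfy {\bf(H$_1$)} and {\bf(H$_2$)} (boundedness is inherited from that of $(R^{\iota})^{-1}$ and of the original data), its cross terms vanish, and Condition {\bf(PD)} for it reduces to $R^{1}(\cdot),R^{2}(\cdot)\gg0$ together with $\mathscr Q^{\iota}(\cdot)=Q^{\iota}(\cdot)-S^{\iota}(\cdot)^{\top}(R^{\iota}(\cdot))^{-1}S^{\iota}(\cdot)\ge0$, which is precisely the Schur‑complement reformulation \eqref{PD-1} of the original Condition {\bf(PD)}. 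The Hamiltonian system associated with \eqref{equ-MFLQ-26}--\eqref{equ-MFLQ-27} is then \eqref{Hamil-s-neq0} (equivalently \eqref{Hamil} with $K=\widetilde K$), and the dissipation constants governing it are $\widetilde\kappa_{1},\widetilde\kappa_{2},\widetilde\kappa$. Hence Proposition~\ref{Th-FBSDE-1}, applied verbatim to the transformed data, furnishes a constant $\widetilde{\mathbf k}>0$ such that for every $\widetilde K\in\big[\frac{\widetilde\kappa_{1}-\widetilde\kappa_{2}}{2},\frac{\widetilde\kappa_{1}-\widetilde\kappa_{2}}{2}+\widetilde{\mathbf k}\big)$ and every initial pair $(t,x_{t})$, the system \eqref{Hamil-s-neq0} admits a unique solution $\theta(\cdot)\in\cL^{2,\widetilde K}_{\mathbb F}(t,\infty)$, along with the stability estimates of the form \eqref{equ-FBSDE-4}--\eqref{equ-FBSDE-5}.

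Finally I would invoke the Hamiltonian‑system characterization (Lemma~\ref{Le-MFLQ-1}) for the transformed problem: unique solvability of \eqref{Hamil-s-neq0} in $\cL^{2,\widetilde K}_{\mathbb F}(t,\infty)$ is equivalent to unique open‑loop solvability of \eqref{equ-MFLQ-26}--\eqref{equ-MFLQ-27}, with optimal control $\mathbf u^{*}(\cdot)=-(R^{1})^{-1}\mathbf{\Lambda}^{1}[y^{(1)},z^{(1)},k^{(1)}](\cdot)-(R^{2})^{-1}\mathbf{\Lambda}^{2}[y^{(2)},z^{(2)},k^{(2)}](\cdot)$ read off from that solution. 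Undoing \eqref{equ-MFLQ-25} gives $u^{*}(\cdot)=\mathbf u^{*}(\cdot)-(R^{1})^{-1}S^{1}x^{(1)}(\cdot)-(R^{2})^{-1}S^{2}x^{(2)}(\cdot)$, which is exactly \eqref{optimal-u}; boundedness of $(R^{\iota})^{-1},S^{\iota},B^{\iota},\mathbf D^{\iota}$, the $L^{2}_{\rho}$‑boundedness of $\mathbf N^{\iota}$, and $\theta(\cdot)\in\cL^{2,\widetilde K}_{\mathbb F}(t,\infty)$ place $u^{*}(\cdot)$ in $\mathcal{U}^{\widetilde K}[t,\infty)$, and the problem equivalence from the first step transfers unique open‑loop solvability back to Problem~(MF-LQ) at $(t,x_{t})$ with the optimal control as claimed. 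The only genuinely delicate point is the first step — verifying that the change of variable is a bona fide bijection of the admissible classes and identifying $\widetilde K<\widetilde\kappa$ as exactly the condition needed to keep the transformed state in $L^{2,\widetilde K}_{\mathbb F}(t,\infty;\mathbb R^{n})$ (and hence to make the transformed cost finite); everything after that is bookkeeping together with the appeals to Proposition~\ref{Th-FBSDE-1} and Lemma~\ref{Le-MFLQ-1}.
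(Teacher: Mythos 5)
Your proposal is correct and follows essentially the same route as the paper: reduce to the zero-cross-term case via the linear transformation \eqref{equ-MFLQ-25}, verify that the transformed data still satisfy {\bf(H$_1$)}, {\bf(H$_2$)} and the Schur-complement form \eqref{PD-1} of {\bf(PD)}, apply Proposition \ref{Th-FBSDE-1} with the dissipation constants built from $\mathscr A^{\iota},\mathscr C^{\iota},\mathscr M^{\iota}$, and transfer the optimal control back by undoing the transformation. Your explicit verification that the change of variables is a bijection of the admissible classes and that $\widetilde K<\widetilde\kappa$ is exactly what keeps the transformed state integrable is a welcome elaboration of a step the paper leaves implicit.
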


%
%

By comparing  Proposition \ref{Th-FBSDE-1} and Proposition \ref{Pro-Sneq0},
 we find  the solutions to the equations \eqref{FBSDE-S=0} and \eqref{Hamil-s-neq0} lie in the different spaces.
Moreover,
the cross terms  $S(\cd )$, $\bar S(\cd )$ affect  the existing spaces of the open-loop optimal controls of Problem (MF-LQ).
This   does not  happen  in the finite horizon optimal control problems, referring to \cite{WYY-2019}.
 Such a distinction has been discovered for a kind of  infinite horizon LQ control problems, which corresponds to   the mean-field and jumps terms disappearing in Problem (MF-LQ), referring to \cite{Wei-Yu-2021}.   Examples 4.5 and 4.8 therein  illustrated the distinction intuitively. An illustrative example is also  provided here   to support and better understand Propositions \ref{Th-FBSDE-1} and \ref{Pro-Sneq0}. For simplified,  we set $n=m=d=1$ and ignore the Poisson random measure in the following example.

 \begin{example}
Consider  the following MF-SDE,
\begin{equation}\label{equ-MFLQ-30}
\left\{
\begin{aligned}
&\!\!\mathrm dx(s)=\[-\big(a(s)+2\rho\big)x(s)+ a(s)\mathbb{E}{[x(s)]}-\big(a(s)+\frac32\rho\big)u(s)+a(s)\mathbb{E}{[u(s)]}\] \, \mathrm ds\\
&\hskip1.1cm    + \sqrt{2\big(a(s)+\rho\big)}\[x(s)+\mathbb{E}{[x(s)]}+ u(s)+\mathbb{E}{[u(s)]}\] \, \mathrm dW(s) ,\qq s\geq 0, \!\!\!\!\! \\
&\!\!x(0)=x_{0}\neq0,
\end{aligned}\right.
\end{equation}
where $\rho$ is a positive constant,
$a(\cdot)$ is   a deterministic bounded
function  satisfying $a(\cd)+\rho > 0$,
 and the control process $u(\cdot)\in \mathcal{U}_{ad}^K[0,\infty)$.
Our aim is to minimize the following cost functional
\begin{equation}\label{equ-MFLQ-29}
\begin{array}{lll}
&\!\!\!J^{K}\left(0,x_{0} ; u(\cdot)\right) =\dfrac{1}{2} \mathbb{E} \ds\int_{0}^{\infty} e^{ 2K s}\big[(x(s)+u(s))^2+(\mathbb{E}[ x(s)]+\mathbb{E}[ u(s)] )^2\big]\,\mathrm ds.
\end{array}
\end{equation}
Corresponding to the  setting in previous sections,  we  know $\kappa_1=2\rho$, $\kappa=\kappa_2=\rho$, and  the parameter   $K<\rho$.
Moreover, the associated Hamiltonian system   reads (ignoring the variable $s$),
\begin{equation}\label{equ-MFLQ-31}
\left\{
\begin{aligned}
&\!\!\mathrm dx(s)\!=\!\[\!\!-\frac{\rho}{2} x-(a+\frac{3}{2}\rho)^{2}y
+\(\!a^2+3\rho a+\frac{9}{8}\rho ^{2}\! \)\mathbb{E}{[y]}+\sqrt{2(a+\rho)}\(\!(a+\frac{3}{2}\rho)z
-a\mathbb{E}{[z]} \!\)\] \, \mathrm ds\\
&\hskip1.2cm  + \sqrt{2(a+\rho)}\Big[\!(a+\frac{3}{2}\rho)y-a\mathbb{E}{[y]}-\sqrt{2(a+\rho)}\big(z+\mathbb{E}{[z]}\big)\!\Big]\, \mathrm dW(s),  \!\!\!\!\! \\
&\!\!\mathrm dy(s)=-\(2K- \frac{\rho}{2}  \)y \, \mathrm ds + z \, \mathrm dW(s)
 , \qq s\geq 0, \!\!\!\!\! \\
&\!\!x(0)=x_{0}.
\end{aligned}\right.
\end{equation}

Now we claim that \eqref{equ-MFLQ-31} does not admit a solution in $L_\mathbb{F}^{2,K}(0,\infty;\mathbb{R}^3)$ with $K\in\left[\frac\rho2,  \rho\right)$. In fact, $(0,0)\in L_\mathbb{F}^{2,K}(0,\infty;\mathbb{R}^2)$ is the unique solution to BSDE in \eqref{equ-MFLQ-31}.
Then,
\begin{equation}\label{equ-MFLQ-32}
\left\{
\begin{aligned}
&\!\!\mathrm d x (s)= -\frac{\rho}{2} x (s) \, \mathrm ds, \q s\geq 0,  \\
&\!\!x (0)=x_{0}\neq 0,
\end{aligned}\right.
\end{equation}
 whose unique  solution is $ x (s)=x_0 e^{-\frac{\rho}{2}  s},$ $ s\geq 0$.  Further, $x(\cd)\in  L_\mathbb{F}^{2,K}(0,\infty;\mathbb{R})$ implies
 $x_0 = 0$, which contradicts  $x_0\neq 0$.
Therefore,  FBSDEs \eqref{equ-MFLQ-31} admits no solution in $L_\mathbb{F}^{2,K}(0,\infty;\mathbb{R}^3)$ with $K\in\left[\frac\rho2,  \rho\right)$.

By checking Proposition \ref{Th-FBSDE-1},  we find it fails for \eqref{equ-MFLQ-31}, which is due to $S(\cdot)\neq 0$ and  $\bar S(\cdot)\neq 0$.  Further, Lemma \ref{Le-MFLQ-1} implies that the control problem formulated by  \eqref{equ-MFLQ-29} and \eqref {equ-MFLQ-30}  does not have the optimal control in  $\mathcal{U}_{ad}^K[0, \infty )$  with $K\in\left[\frac\rho2,  \rho\right)$.

 \medskip

Next,   we  resort to Proposition  \ref{Pro-Sneq0} to tackle with the nonzero $S(\cdot)$ and  $\bar S(\cdot)$.
Applying  the linear transformation \eqref{equ-MFLQ-25} to the controlled system \eqref{equ-MFLQ-30} and the cost functional \eqref{equ-MFLQ-31}, we get
\begin{equation}\label{equ-MFLQ-34}
\left\{
\begin{aligned}
&\!\!\mathrm dx(s)=\[-\frac{\rho}{2}  x(s)-(a(s)+\frac{3\rho}{2})\mathbf{u}(s)+a(s)\mathbb{E}{[\mathbf{u}(s)]}\] \, \mathrm ds\\
&\hskip1.1cm    + \sqrt{2(a(s)+\rho)}\[\mathbf{u}(s)+\mathbb{E}{[\mathbf{u}(s)]}\] \, \mathrm dW(s) ,\qq s\geq 0, \!\!\!\!\! \\
&\!\!x(0)=x_{0}\neq 0,
\end{aligned}\right.
\end{equation}
and
\begin{equation}\label{equ-MFLQ-33}
\begin{array}{lll}
\mathbf{J}^{\widetilde K}\left(0, x_{0} ; \mathbf{u}(\cdot)\right)
=\dfrac{1}{2}\mathbb{E} \ds\int_{0}^{\infty} e^{2 \widetilde K s}\big[\langle \mathbf{u}(s), \mathbf{u}(s)\rangle
+\langle \mathbb{E}[\mathbf{u}(s)], \mathbb{E}[\mathbf{u}(s)]\rangle\big]\mathrm ds,\q \widetilde K=K.
\end{array}
\end{equation}
The    Hamiltonian system corresponding to the control problem with \eqref{equ-MFLQ-34} and \eqref{equ-MFLQ-33}  is in fact still \eqref{equ-MFLQ-31} with $K=\widetilde K$.
In this framework,  $\widetilde{\kappa}=\widetilde{\k}_1=\widetilde{\k}_2=\frac\rho 2$.
Applying Proposition \ref{Pro-Sneq0}, there exists some constant $\widetilde {\mathbf{k}}>0$, such that MF-FBSDEs \eqref{equ-MFLQ-31} possesses a unique solution $\theta(\cdot)=(x(\cdot), y(\cdot), z(\cdot))^{\top}\in L_{\mathbb{F}}^{2,  \widetilde  K}(0, \infty ; \mathbb{R}^3)$ with $  \widetilde K\in \left[0,\widetilde{\mathbf{k}} \wedge \frac\rho 2\right)$.
  It is exactly that $\theta(s)=(x_0 e^{-\frac{\rho}{2} s}, 0,0)^{\top}  \in L_{\mathbb{F}}^{2,   \widetilde  K}(0, \infty ; \mathbb{R}^3)$ with $  \widetilde K\in \left[0,\widetilde{\mathbf{k}} \wedge \frac\rho 2\right)$.
Moreover, from Lemma \ref{Le-MFLQ-1}, when $  \widetilde  K\in \left[0,\widetilde{\mathbf{k}} \wedge \frac\rho 2\right)$,  $u^\ast(\cdot)=-x_0 e^{-\frac{\rho}{2} \cdot} \in \mathcal{U}_{ad}^ {\widetilde K}[0,\infty)$ is  a unique  optimal control of \eqref{equ-MFLQ-29} and \eqref {equ-MFLQ-30}.

 \end{example}

\end{document}